\newtheorem{theorem}{Theorem}[section]
\newtheorem{lemma}[theorem]{Lemma}
\newtheorem{proposition}[theorem]{Proposition}
\newtheorem{corollary}[theorem]{Corollary}
\theoremstyle{definition}
\newtheorem{definition}[theorem]{Definition}
\newtheorem{example}[theorem]{Example}
\theoremstyle{remark}
\newtheorem{remark}[theorem]{Remark}
\numberwithin{equation}{section}
\begin{document}

\title[Regular subspaces of Dirichlet forms]{Regular subspaces of Dirichlet forms}

%    Information for first author
\author{Liping Li}
%    Address of record for the research reported here
\address{*School of Mathematical Sciences, Fudan University, Shanghai 200433, China.}
%    Current address
%\curraddr{Department of Mathematics and Statistics,
%Case Western Reserve University, Cleveland, Ohio 43403}
\email{lipingli10@fudan.edu.cn}
%    \thanks will become a 1st page footnote.
\thanks{This paper is collected in Festschrift Masatoshi Fukushima,  In Honor of Masatoshi Fukushima's Sanju, pp: 397-420, 2015.}

%    Information for second author
\author{Jiangang Ying}
\address{School of Mathematical Sciences, Fudan University, Shanghai 200433, China.}
\email{jgying@fudan.edu.cn}
%\thanks{Support information for the second author.}

%    General info
\subjclass[2000]{31C25, 60J45}

%\date{January 1, 2001 and, in revised form, June 22, 2001.}

%\dedicatory{This paper is collected in Festschrift Masatoshi Fukushima,  In Honor of Masatoshi Fukushima's Sanju, pp: 397-420.}

\keywords{Regular subspaces, Dirichlet forms, time change, independent coupling}

\begin{abstract}
The regular subspaces of a Dirichlet form are the regular Dirichlet forms that inherit the original form but possess smaller domains. The two problems we are concerned are: (1) the existence of regular subspaces of a fixed Dirichlet form, (2) the characterization of the regular subspaces if exists. In this paper, we will first research the structure of regular subspaces for a fixed Dirichlet form. The main results indicate that the jumping and killing measures of each  regular subspace are just equal to that of the original Dirichlet form. By using the independent coupling of Dirichlet forms and some celebrated probabilistic transformations, we will study the existence and characterization of the regular subspaces of local Dirichlet forms.
\end{abstract}

\maketitle
\section{Introduction}\label{ra_sec1}
Let $E$ be a locally compact {separable metric space}, $m$ a  Radon measure fully supported on $E$, then $L^2(E,m)$ is a Hilbert space. A non-negative definite symmetric bilinear form $\mathcal{E}$ with domain $\mathcal{F}$ densely defining on $L^2(E,m)$ is called a Dirichlet form and denoted by $(\mathcal{E},\mathcal{F})$ if it is closed and Markovian. $(T_t)_{t>0}, (G_\alpha)_{\alpha>0}$ are its semigroup and resolvent. Define:
 \[
	 \mathcal{E}_\alpha(u,v):= \mathcal{E}(u,v)+\alpha\cdot (u,v)_m,\quad {u,v\in \mathcal{F}}, \alpha\geq 0.
 \]
We then denote  the space of all real continuous functions on $E$ by $C(E)$ and its subspace of continuous functions with compact support (resp. bounded continuous functions, {one order continuous differentiable functions with compact supports, the continuous functions which converge to zero at infinite}) by $C_\mathrm{c}(E)$ (resp. $C_\mathrm{b}(E),C^1_\mathrm{c}(E), C_0(E)$). A Dirichlet form $(\mathcal{E},\mathcal{F})$ is called regular if $\mathcal{F}\cap C_\mathrm{c}(E)$ is dense in $\mathcal{F}$ with {$\mathcal{E}_1^{\frac{1}{2}}$-norm} and dense in $C_\mathrm{c}(E)$ with uniform norm. A core of $\mathcal{E}$ is by definition a subset $\mathcal{C}$ of $\mathcal{F}\cap C_\mathrm{c}(E)$ such that $\mathcal{C}$ is dense in $\mathcal{F}$ with {$\mathcal{E}_1^{\frac{1}{2}}$-norm} and dense in $C_\mathrm{c}(E)$ with uniform norm.  We refer the definitions of standard core and special standard core to \S1.1 of  \cite{FU}. {The 1-capacity of regular Dirichlet form $(\mathcal{E},\mathcal{F})$ is always denoted by  \textbf{Cap}, see(2.1.2) and(2.1.3) of  \cite{FU}. An increasing sequence $\{F_k, k\geq 1\}$ of closed sets of $E$ is an \emph{$\mathcal{E}$-nest} if $\cup_{k\geq 1}\mathcal{F}_{F_k}$ is $\mathcal{E}_1^{\frac{1}{2}}$-dense in $\mathcal{F}$ where $\mathcal{F}_{F_k}:= \{f\in \mathcal{F}: f=0\; m\text{-a.e. on }F_k\}$. A subset $N$ of $E$ is \emph{$\mathcal{E}$-polar} if there is an $\mathcal{E}$-nest $\{F_k,k\geq 1\}$ such that $N\subset \cap_{k\geq 1}(E\setminus F_k)$. A statement depending on $x\in A$ is said to hold $\mathcal{E}$-quasi-everywhere ($\mathcal{E}$-q.e. in abbreviation) on $A$ if there is an $\mathcal{E}$-polar set $N\subset A$ such that the statement is true for every $x\in A\setminus N$. A function $f$ on $E$ is said to be $\mathcal{E}$-quasi-continuous if there is an $\mathcal{E}$-nest $\{F_k,k\geq 1\}$ such that $f|_{F_k}$ is finite and continuous on $F_k$ for each $k\geq 1$, which will be denoted in abbreviation as $f\in C(\{F_k\})$. In fact, a set $N$ is $\mathcal{E}$-polar if and only if $N$ is of zero 1-capacity, i.e. $\text{Cap}(N)=0$. An increasing sequence of closed sets $\{F_k\}$ is an $\mathcal{E}$-nest if and only if $\lim_{k\rightarrow \infty}\text{Cap}(K\setminus F_k)=0$ for any compact set $K\subset E$. If there is no confusion with the fixed regular Dirichlet form $(\mathcal{E},\mathcal{F})$, for convenience we drop ``$\mathcal{E}$-" from the terminology, then ``$\mathcal{E}$-polar", an ``$\mathcal{E}$-nest" and ``$\mathcal{E}$-quasi-continuous" will simply be  called \emph{polar}, a \emph{nest} and \emph{quasi continuous} respectively. We call an increasing sequence $\{F_k\}$ of closed sets a \emph{Cap-nest} if $\lim_{k\rightarrow \infty}\text{Cap}(E\setminus F_k)=0$. Any Cap-nest is a ($\mathcal{E}$-)nest but not vice versa. However, a function $f$ is quasi continuous if and only if there is a Cap-nest $\{F_k\}$ such that $f\in C(\{F_k\})$, i.e. $f|_{F_k}$ is finite and continuous on $F_k$ for each $k\geq 1$. By \text{Theorem 2.1.7} of  \cite{FU}, any function $u\in \mathcal{F}$ admits a quasi continuous version $\tilde{u}$, i.e. $\tilde{u}$ is quasi continuous and $\tilde{u}=u$ $m$-a.e.}

For an $m$-measurable function $u$, {the support $\text{\text{supp}}[|u|\cdot m]$ of the measure $|u|\cdot m$ is simply denoted by $\text{\text{supp}}[u]$}. We say that $(\mathcal{E},\mathcal{F})$ possesses the local property if $\mathcal{E}(u,v)=0$ for any $u,v\in\mathcal{F}$ with disjoint compact supports. Moreover $(\mathcal{E},\mathcal{F})$ is called {strongly local} if $ \mathcal{E}(u,v)=0$ for any $u,v\in\mathcal{F}$ with compact supports and $v$ is constant on a neighbourhood of $\text{\text{supp}}[u]$. For more details, see   
 \cite{FU}.

%An $m$-measurable set $A\subset E$ is said to be $(T_t)$-invariant if $T_t(1_Af)=1_AT_tf$, $m$-a.e for any $f\in L^2(E)$ and $t>0$. A Dirichlet form with semigroup $(T_t)_{t>0}$ is called irreducible if every $(T_t)$-invariant set $A$ is $m$-trivial, i.e. $m(A)=0$, or $m(E\backslash A)=0.$

The basic concept of this paper is the regular subspace of a Dirichlet form.

\begin{definition}\label{D1}
    Let $(\mathcal{E},\mathcal{F}),(\mathcal{E}',\mathcal{F}')$ be two regular Dirichlet forms on $L^2(E,m)$, we say $(\mathcal{E}',\mathcal{F}')$ is a {\emph{regular subspace}} of $(\mathcal{E},\mathcal{F})$ if
        \begin{equation}\label{DEF1}
            \mathcal{F'} \subset \mathcal{F}, \qquad \mathcal{E}(u,v)=\mathcal{E'}(u,v) \quad  {u,v \in \mathcal{F'}}.
        \end{equation}
We denote it by $(\mathcal{E',F'})\prec (\mathcal{E,F})$ or just $\mathcal{E'} \prec \mathcal{E}$ for simple. If in addition $\mathcal{F}'\neq \mathcal{F}$, $(\mathcal{E',F'})$ is said to be a proper subspace of $(\mathcal{E},\mathcal{F})$.

If $Y$ and $Z$ are the corresponding Hunt processes of $(\mathcal{E},\mathcal{F})$ and $(\mathcal{E}',\mathcal{F}')$, we say $Z$ is {an $R$-\emph{subprocess}} of $Y$ and denote it by $Z\prec Y$.
\end{definition}

{\begin{remark}\label{RMCC}
	Let $(\mathcal{E',F'})\prec (\mathcal{E,F})$ and $\text{Cap}$, $\text{Cap}'$ the 1-capacity of $(\mathcal{E,F})$, $(\mathcal{E',F'})$ respectively. Clearly 
	\[
		\text{Cap}'(A)\geq \text{Cap}(A),
	\]
	for any $A\subset E$ by the definition of 1-capacity. Hence any $\mathcal{E}'$-polar set is also $\mathcal{E}$-polar, any $\mathcal{E}'$-(resp. $\text{Cap}'$-)nest is also $\mathcal{E}$-(resp. $\text{Cap}$-)nest and any $\mathcal{E}'$-quasi-continuous function is also $\mathcal{E}$-quasi-continuous. 
\end{remark}}

 %\begin{definition}
    %Let $(\mathcal{F}^1,\mathcal{E}^1)$ and $(\mathcal{F}^2,\mathcal{E}^2)$ are two regular Dirichlet forms (may be on two different state spaces). If all the regular subspaces of $(\mathcal{F}^1,\mathcal{E}^1)$ has a one-to-one correspondence to all the regular subspaces of $(\mathcal{F}^2,\mathcal{E}^2)$, they are called to be equivalent.
 %\end{definition}

 %Note that $Cap'(B)\geq Cap(B)$ for any Borel set $B$ where $Cap,Cap'$ are the capacities of $\mathcal{E},\mathcal{E}'$. So every $\mathcal{E}'$-polar set (hence $\mathcal{E}'$-nest or $\mathcal{E}'$-quasi continuous function) is always $\mathcal{E}$-polar set (resp. $\mathcal{E}$-nest, or $\mathcal{E}$-quasi continuous).

% Obviously $(\mathcal{E},\mathcal{F})$ is a trivial regular subspace of itself, and all the Dirichlet forms which have no proper regular subspaces are equivalent. We mainly focus on the proper regular subspaces.

Two problems that we are concerned in this paper are the existence of regular subspaces and the characterization of regular subspaces if they exist for a fixed regular Dirichlet form. These problems were raised firstly in     \cite{FMG} which showed that all regular subspaces of one-dimensional Brownian motion can be achieved by a time change of additive functional firstly and a spatial transform secondly.
It is a pity that they hold only for one-dimensional local cases, in other words, the diffusions on $\mathbf{R}$, due to the method used in that paper,  see    \cite{XPJ}.

The structure of this paper is as follows. We will first research the structure of regular subspaces for a fixed Dirichlet form. In \text{\S\ref{HJK}}, we will prove that  the jumping and killing measures of every regular subspace are just equal to that of  the original Dirichlet  form, see \text{Theorem \ref{THM3}}. In \text{\S\ref{IUST}}, we will discuss several probabilistic transformations such as  time changes, killing and resurrected transformations. It is shown that if $(\tilde{\mathcal{E}},\tilde{\mathcal{F}})$ is a regular Dirichlet form which is transformed from another one, say $(\mathcal{E}, \mathcal{F})$, then every regular subspace of $(\tilde{\mathcal{E}},\tilde{\mathcal{F}})$  can be produced by the same transformation on some regular subspace of $(\mathcal{E}, \mathcal{F})$. If $(\mathcal{E}, \mathcal{F})$ and $(\tilde{\mathcal{E}},\tilde{\mathcal{F}})$ have the above relationship, we say that \textbf{they are two Dirichlet forms with the same structure of regular subspaces}. These transformations are powerful tools to deal with what we are concerned. For example, the killing part of a Dirichlet form doesn't play a role in generating a regular subspace, we could always assume that it has no killing inside (by a resurrection if necessary), see \text{\S\ref{REs}}. On the other hand, by spatial homeomorphic transformation and time changes, we can easily extend the results of one-dimensional Brownian motion to one-dimensional diffusions. This is the main topic of     \cite{XPJ}, see \text{Example \ref{EXA1}}.

In \text{\S\ref{EXIS}}, we will study the regular subspaces of a local Dirichlet form. By introducing the independent coupling of Dirichlet forms, we will prove that the associated Dirichlet forms of multidimensional Brownian motions always have proper regular subspaces. On the other hand, the corresponding R-subprocesses of multidimensional Brownian motions may be acted as the independent products of R-subprocess of one-dimensional Brownian motion, see \text{Proposition \ref{PROP6}} and \text{Theorem \ref{PROP7}}. In \text{\S\ref{PRAB}}, we will extend the similar results to the planar reflecting and absorbing Brownian motion on any domain of $\mathbf{R}^2$ by using the transformation method introduced in  \text{\S\ref{IUST}}. It indicates that the  associated Dirichlet forms of absorbing or reflecting Brownian motions on any planar domains always possess the same structure of regular subspaces. In addition, the associated Dirichlet forms of the reflecting and absorbing Brownian motions on arbitrary domains of $\mathbf{R}^d$ always possess proper regular subspaces not only for the two dimensional cases, see \S\ref{ARBAD}. At last, we will also prove that the uniform-ellipticity-type {strongly local} Dirichlet forms on arbitrary domains always possess proper regular subspaces, see \text{Proposition \ref{UED}}. 

\section{Structure of regular subspaces}\label{CBT}
In this section, we will research the structure of regular subspaces. We first conclude that the jumping and killing measures of every regular subspace are equal to that of the original Dirichlet form. Then we will discuss several probabilistic transformations which could become powerful tools to deal with what we concerned about some specific Dirichlet forms.

\subsection{Jumping and killing measures of regular subspaces}\label{HJK}

The following theorem gives a basic property about regular subspaces that the jumping and killing measures of regular subspace must be the same as that of the original Dirichlet form. This is the essential reason for \text{Theorem 4.1(1)} of  \cite{XPJ} which is just the diffusion cases.

Let us first review the Beurling-Deny formulae\footnote{See \text{Theorem 3.2.1} of  \cite{FU}.}  which state that any regular Dirichlet form $(\mathcal{E},\mathcal{F})$ on $L^2(E,m)$ can be expressed for any $u,v\in \mathcal{F}$ as follows:
    \begin{equation}\label{BDC}
        \begin{split}
                \mathcal{E}(u,v)=& \mathcal{E}^{(\mathrm{c})}(u,v)+ {\int_{E\times E \backslash d}(\tilde{u}(x)-\tilde{u}(y))(\tilde{v}(x)-\tilde{v}(y))J(dx,dy) }\\
                &+\int_E \tilde{u}(x)\tilde{v}(x)k(dx)
        \end{split}
    \end{equation}
where $\mathcal{E}^{(\mathrm{c})}$ is a symmetric form, satisfying the {strongly local} property and Markovian property\footnote{See \S1.4 of   \cite{FU}.}, $J$ is a symmetric positive Radon measure on the product space $E\times E$ off the diagonal $d$ called the {\emph{jumping measure}} and $k$ is a positive Radon measure on $E$ called the {\emph{killing measure}}. They both charge no set of zero capacity, i.e. if $A$ is measurable and $\text{Cap}(A)=0$, then $k(A)=0, J(A\times E)=0$. Note that the following equation is satisfied:
    \begin{equation}\label{DJ1}
        \mathcal{E}(u,v)={-2\int_{E\times E}u(x)v(y)J(dx,dy)}
    \end{equation}
for any $u,v\in \mathcal{F} \cap C_\mathrm{c}(E)$ with disjoint supports. The triple $(\mathcal{E}^{(\mathrm{c})},J,k)$ is uniquely determined by $\mathcal{E}$ and we call it the Beurling-Deny triple.

{We refer the definitions of the energy measure $\mu_{<u>}$ as well as its strongly local part measure $\mu_{<u>}^c$ of $u\in \mathcal{F}_b$ to \S3.2 of  \cite{FU}. Moreover, for $u,v\in \mathcal{F}_b$ put 
\begin{equation}\label{MUC}
\mu^c_{<u,v>}:=\frac{1}{2}(\mu^c_{<u+v>}-\mu^c_{<u>}-\mu^c_{<v>})
\end{equation}
which is a bounded signed measure. 
We then have
\begin{equation}\label{IEFD}
	\int_E f d\mu^c_{<u,v>}=\mathcal{E}^{(c)}(uf,v)+\mathcal{E}^{(c)}(vf,u)-\mathcal{E}^{(c)}(uv,f)
\end{equation}}
for any $f\in \mathcal{F}\cap C_c(E)$.

 \begin{theorem}\label{THM3}
  {  Let $(\mathcal{E},\mathcal{F}),(\mathcal{E}',\mathcal{F}')$ be two regular Dirichlet forms on $L^2(E,m)$ with Beurling-Deny triples $(\mathcal{E}^{(\mathrm{c})},J,k)$ and $(\mathcal{E}^{(\mathrm{c})'},J',k')$ respectively. Then $(\mathcal{E}',\mathcal{F}')\prec (\mathcal{E}, \mathcal{F})$ if and only if  $\mathcal{F}'\cap C_\mathrm{c} \subset \mathcal{F}\cap C_\mathrm{c}, J=J',k=k'$ and $\mu^c_{<u,v>}={\mu'}^{c}_{<u,v>}$ for any $u,v\in \mathcal{F}'\cap C_\mathrm{c}$ where ${\mu'}^{c}_{<u,v>}$ is defined similarly as(\ref{MUC}) with respect to $(\mathcal{E',F'})$.}
\end{theorem}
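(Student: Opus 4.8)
The plan is to prove the two implications separately; the passage from $(\mathcal{E}',\mathcal{F}')\prec(\mathcal{E},\mathcal{F})$ to the three equalities is where essentially all the content sits. One point I would use throughout: every function in $\mathcal{F}'\cap C_\mathrm{c}$ (a fortiori in $\mathcal{F}\cap C_\mathrm{c}$) is genuinely continuous, hence is its own $\mathcal{E}$- and $\mathcal{E}'$-quasi-continuous version, so feeding such functions into (\ref{BDC}) avoids the version mismatch flagged in Remark \ref{RMCC}. For \emph{sufficiency} I would argue as follows. Given the four hypotheses, for $u\in\mathcal{F}'\cap C_\mathrm{c}$ one has $\mathcal{E}^{(\mathrm{c})}(u,u)=\tfrac12\mu^c_{<u>}(E)=\tfrac12{\mu'}^c_{<u>}(E)=\mathcal{E}^{(\mathrm{c})'}(u,u)$ (cf.\ \S3.2 of \cite{FU}); combining this with $J=J'$, $k=k'$ in (\ref{BDC}) and polarizing gives $\mathcal{E}=\mathcal{E}'$ on $\mathcal{F}'\cap C_\mathrm{c}$. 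Then for an arbitrary $u\in\mathcal{F}'$ I would pick $u_n\in\mathcal{F}'\cap C_\mathrm{c}$ with $u_n\to u$ in $\mathcal{E}'_1$-norm (regularity of $\mathcal{E}'$); since $\mathcal{E}_1(u_n-u_m,u_n-u_m)=\mathcal{E}'_1(u_n-u_m,u_n-u_m)\to0$, the sequence is $\mathcal{E}_1$-Cauchy, so closedness of $(\mathcal{E},\mathcal{F})$ puts $u$ in $\mathcal{F}$ with $\mathcal{E}_1$-convergence, and passing to the limit (and polarizing) yields (\ref{DEF1}).

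For \emph{necessity}, the inclusion $\mathcal{F}'\cap C_\mathrm{c}\subset\mathcal{F}\cap C_\mathrm{c}$ is immediate and $\mathcal{E}=\mathcal{E}'$ on $\mathcal{F}'\cap C_\mathrm{c}$ is part of the hypothesis. I would establish $J=J'$ first: for $u,v\in\mathcal{F}'\cap C_\mathrm{c}$ with disjoint supports, (\ref{DJ1}) for both forms gives $\int u(x)v(y)\,(J-J')(dx,dy)=0$. To promote this to an identity of measures I would fix relatively compact open $U,V$ with $\overline U\cap\overline V=\emptyset$ and, for $f\in C_\mathrm{c}(U)$, build an approximating sequence \emph{inside} $\mathcal{F}'\cap C_\mathrm{c}$ whose supports remain in $U$: take $\varphi\in\mathcal{F}'\cap C_\mathrm{c}$ with $0\le\varphi\le1$, $\varphi=1$ on a neighbourhood of $\mathrm{supp}[f]$, $\mathrm{supp}[\varphi]\subset U$ (such Urysohn functions are available because $(\mathcal{E}',\mathcal{F}')$ is regular, cf.\ \S1.4 of \cite{FU}), choose $g_n\in\mathcal{F}'\cap C_\mathrm{c}$ with $g_n\to f$ uniformly, and set $f_n:=\varphi g_n$, which lies in $\mathcal{F}'\cap C_\mathrm{c}$ (the algebra $\mathcal{F}'_b$), is supported in $U$, and converges uniformly to $\varphi f=f$. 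Doing the same on $V$ and letting $n\to\infty$ (dominated convergence, $J$ and $J'$ being finite on the compact set $\overline U\times\overline V\subset E\times E\setminus d$) gives $\int f(x)g(y)\,(J-J')(dx,dy)=0$ for all $f\in C_\mathrm{c}(U)$, $g\in C_\mathrm{c}(V)$; Stone--Weierstrass then yields $J=J'$ on $U\times V$, and such rectangles exhaust $E\times E\setminus d$.

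Next I would isolate the killing measure. For $u\in\mathcal{F}'\cap C_\mathrm{c}$ pick $w\in\mathcal{F}'\cap C_\mathrm{c}$ (same Urysohn argument) with $0\le w\le1$ and $w=1$ on a neighbourhood of $\mathrm{supp}[u]$, so that $wu=u$ and $w$ is constant near $\mathrm{supp}[u]$. Then $\mathcal{E}^{(\mathrm{c})}(w,u)=\mathcal{E}^{(\mathrm{c})'}(w,u)=0$ by strong locality, the jumping parts of $\mathcal{E}(w,u)$ and $\mathcal{E}'(w,u)$ agree by the previous step, and the killing parts equal $\int wu\,dk=\int u\,dk$ and $\int u\,dk'$; since $\mathcal{E}(w,u)=\mathcal{E}'(w,u)$, I get $\int u\,dk=\int u\,dk'$ for all $u\in\mathcal{F}'\cap C_\mathrm{c}$, hence $k=k'$ by uniform density of $\mathcal{F}'\cap C_\mathrm{c}$ in $C_\mathrm{c}(E)$. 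Finally, with $J=J'$ and $k=k'$ in hand, (\ref{BDC}) forces $\mathcal{E}^{(\mathrm{c})}=\mathcal{E}^{(\mathrm{c})'}$ on $\mathcal{F}'\cap C_\mathrm{c}$ (polarize); since $uf,vf,uv\in\mathcal{F}'\cap C_\mathrm{c}$ whenever $u,v,f\in\mathcal{F}'\cap C_\mathrm{c}$, the right-hand side of (\ref{IEFD}) is unchanged on replacing $\mathcal{E}^{(\mathrm{c})}$ by $\mathcal{E}^{(\mathrm{c})'}$, so $\int_E f\,d\mu^c_{<u,v>}=\int_E f\,d{\mu'}^c_{<u,v>}$ for all $f\in\mathcal{F}'\cap C_\mathrm{c}$, and since both are bounded (hence Radon) signed measures they coincide.

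I expect the main obstacle to be the two steps of the necessity half. The step producing $J=J'$ is really about manufacturing enough members of $\mathcal{F}'\cap C_\mathrm{c}$ with prescribed small supports, which forces one to use the regularity of the \emph{subspace} and the algebra structure of $\mathcal{F}'_b$, not anything about $\mathcal{E}$ itself. The genuinely delicate step is disentangling $k$ from the strongly local part: testing $u$ against a $w$ equal to $1$ near $\mathrm{supp}[u]$ makes $\mathcal{E}^{(\mathrm{c})}(w,u)$ vanish by strong locality, but this is only useful because the earlier step has \emph{already} shown the jumping contribution to be common to the two forms --- it is the conjunction of these two facts, rather than either one alone, that pins down $k$.
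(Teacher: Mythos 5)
Your proposal is correct and follows essentially the same route as the paper: sufficiency by passing from $\mathcal{F}'\cap C_\mathrm{c}$ to $\mathcal{F}'$ via an $\mathcal{E}_1$-Cauchy sequence and closedness of $(\mathcal{E},\mathcal{F})$, and necessity by first identifying $J=J'$ from (\ref{DJ1}) on disjointly supported pairs, then $k=k'$ by testing against a function equal to $1$ near the support (strong locality killing the $\mathcal{E}^{(\mathrm{c})}$ term), and finally recovering $\mu^c_{<u,v>}={\mu'}^c_{<u,v>}$ from (\ref{IEFD}). The only difference is that you make explicit two steps the paper leaves as ``easy to check'' (the Urysohn-function localization producing approximants supported in a given open set, and the identity $\mathcal{E}^{(\mathrm{c})}(u,u)=\tfrac12\mu^c_{<u>}(E)$ in the sufficiency direction), which is a faithful filling-in rather than a new argument.
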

\begin{proof}
We first prove the ``if'' part. Note that the quasi-continuous versions of an appropriate function for $\mathcal{E}$ and $\mathcal{E}'$ are different, but for any $u,v\in \mathcal{F}'\cap C_\mathrm{c}$, it is easy to verify that $\mathcal{E}(u,v)= \mathcal{E}'(u,v)$ under these assumptions. We only need to prove $\mathcal{F}'\subset \mathcal{F}$ and $\mathcal{E}(u,u)=\mathcal{E}'(u,u)$ for any $ u\in \mathcal{F}'$ (then we have $\mathcal{E}(u,v)=\mathcal{E}'(u,v)$ for any $ u,v\in \mathcal{F}'$ by using polarization formula).

For any $u\in \mathcal{F}'$, as $\mathcal{E}'$ is regular there exists a sequence $\{u_n\}\subset \mathcal{F}'\cap C_\mathrm{c}$ such that $u_n\rightarrow u$ in {${\mathcal{E}'}_1^{\frac{1}{2}}$-norm}. Because $\mathcal{E}_1(u_n,u_n)=\mathcal{E}'_1(u_n,u_n)$ is convergent to $\mathcal{E}'_1(u,u)$, $u_n$ is $\mathcal{E}_1$-Cauchy. Since $\mathcal{E}$ is closed, there exists $v\in \mathcal{F}$ such that $u_n\rightarrow v$ in {$\mathcal{E}_1^{\frac{1}{2}}$-norm}, especially $u_n\rightarrow v$ in $L^2(E,m)$. But $u_n\rightarrow u$ in $L^2(E,m)$ and hence $u=v\in \mathcal{F}$ and $\mathcal{E}(u,u)=\mathcal{E}'(u,u)$.

{For the other direction, clearly $\mathcal{F}'\cap C_c(E)\subset \mathcal{F}\cap C_c(E)$. We only need to prove $J=J',k=k'$. In fact, if we have $J=J',k=k'$, then for any $u,v\in \mathcal{F}'\cap C_c(E)$, it holds that
\[
	\mathcal{E}^{(c)}(u,v)={\mathcal{E}}^{(c)'}(u,v)
\]
because of (\ref{BDC}). Then we can deduces from (\ref{IEFD}) that
\[
	\int_E fd\mu^c_{<u,v>}=\int_Efd{\mu'}^c_{<u,v>}
\]
for any $f\in \mathcal{F}'\cap C_c(E)$. Hence $\mu^c_{<u,v>}={\mu'}^c_{<u,v>}$ by using the regularity of $(\mathcal{E',F'})$.}

To prove $J=J'$, let $G_1,G_2$ be two disjoint relatively compact open sets. For any $u,v\in \mathcal{F}'\cap C_\mathrm{c}$ with $\text{\text{supp}}[u]\subset G_1, \text{\text{supp}}[v]\subset G_2$, we know from (\ref{DEF1}) and (\ref{DJ1})  that
    \begin{eqnarray*}
    {-2\int u(x)v(y)J(dx,dy)}&=&\mathcal{E}(u,v)\\
                            &=&\mathcal{E}'(u,v)={-2\int u(x)v(y)J'(dx,dy)}.
    \end{eqnarray*}
As $\mathcal{E}'$ is regular, $\mathcal{F}'\cap C_\mathrm{c}$ is dense in $C_\mathrm{c}$. Then one can easily check that
\[
	J(G_1\times G_2)=J'(G_1\times G_2).
\]
On the other hand, for any two disjoint compact sets $K_1,K_2$, one can find two sequences of relatively compact open sets $\{G^1_n\},\{G^2_n\}$ such that $G^1_n \cap G^2_n = \varnothing$ for any $n$ and $G^1_n\downarrow K_1,\; G^2_n\downarrow K_2$. It follows from $J(G^1_n\times G^2_n)=J'(G^1_n\times G^2_n)$ that $J(K_1\times K_2)=J(K_1\times K_2)$. We conclude that, for any two disjoint Borel sets $B_1,B_2$ we have $J(B_1\times B_2)=J'(B_1\times B_2)$.  Then a simple argument gains directly that $J=J'$.

We now prove $k=k'$.

For any relatively compact open set $G$, there exists $u\in \mathcal{F}'\cap C_\mathrm{c}$ such that $u|_G \equiv 1$. In fact, we can choose a function $u_0\in C_\mathrm{c}(E)$, such that $u_0|_{\bar{G}} \geqslant 2$. Since $\mathcal{F}\cap C_\mathrm{c}(E)$ is dense in $C_\mathrm{c}(E)$ with uniform norm, for $\varepsilon$ small enough there exists $u_{0,\varepsilon}\in \mathcal{F}\cap C_\mathrm{c}(E)$ such that $\| u_0-u_{0,\varepsilon}\| <\varepsilon$. Let $\varphi_\varepsilon(t):=t-(\varepsilon \wedge t)\vee (-\varepsilon)$, and $\psi(t):=(0\vee t)\wedge 1$, which are normal contractions. It is easy to check that $u=\psi(\varphi_\varepsilon(u_{0,\varepsilon}))\in \mathcal{F} \cap C_\mathrm{c}(E)$ and equals to 1 on $\bar{G}$. Let $v$ be any function in $\mathcal{F}'\cap C_\mathrm{c}$ with $\text{\text{supp}}[v]\subset G$. Because $\mathcal{E}^{(\mathrm{c})}$ and ${\mathcal{E}^{(\mathrm{c})}}'$ are {strongly local}, $\mathcal{E}^{(\mathrm{c})}(u,v)={\mathcal{E}^{(\mathrm{c})}}'(u,v)=0$. As we have proved $J=J'$, it follows from (\ref{DEF1}) and (\ref{BDC}) that
    \[
        \int u(x)v(x)k(dx)=\int u(x)v(x)k'(dx),
    \]
for any $ v\in \mathcal{F}'\cap C_\mathrm{c}$ with $\text{supp}[v]\subset G,\;u|_G\equiv 1$. Then we have $\int v(x)k(dx)=\int v(x)k'(dx)$. By using the regularity of $\mathcal{E}'$, it is easy to get $k=k'$ which completes the proof.
\end{proof}

%Checking the above proof carefully we have

%\begin{corollary}\label{JK}
    %Assume $(\mathcal{E}',\mathcal{F}'), (\mathcal{F}, \mathcal{E})$ have Beurling-Deny decomposition but not necessarily to be regular (for example, quasi-regular). If $(\mathcal{E}',\mathcal{F}')\prec (\mathcal{F}, \mathcal{E})$ and $\mathcal{F'}\cap C_\mathrm{c}(E)$ is dense in $C_\mathrm{c}(E)$, then $J'=J, k'=k$.
%\end{corollary}
The following corollaries are interesting and not difficult to verify.

\begin{corollary}\label{COR1}
    Let $(\mathcal{E},\mathcal{F})$, $(\mathcal{E}',\mathcal{F}')$ be two regular Dirichlet forms on $L^2(E,m)$ and $(\mathcal{E}',\mathcal{F}')\prec (\mathcal{E}, \mathcal{F})$.
   \begin{enumerate}
      \item If $(\mathcal{E},\mathcal{F})$ is local, so is $(\mathcal{E}',\mathcal{F}')$. Moreover if $(\mathcal{E},\mathcal{F})$ is {strongly local}, so is $(\mathcal{E}',\mathcal{F}')$ too.
      \item If $(\mathcal{E},\mathcal{F})$ is a pure-jumping type Dirichlet form which means $\mathcal{E}^{(\mathrm{c})}=0,\,k=0$, then $(\mathcal{E}',\mathcal{F}')$ is pure-jumping type too.
      \item If $(\mathcal{E},\mathcal{F})$ is a pure-killing Dirichlet form which means $\mathcal{E}^{(\mathrm{c})}=0,\,J=0$, then it must be $\mathcal{F}=L^2(E,m+k)$ and $(\mathcal{E}',\mathcal{F}')=(\mathcal{E},\mathcal{F})$.
      \item If the jumping part or killing part of $(\mathcal{E},\mathcal{F})$ does not disappear, it also does not disappear for that of $(\mathcal{E}',\mathcal{F}')$.
   \end{enumerate}
\end{corollary}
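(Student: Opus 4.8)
The plan is to verify each of the four assertions directly from Theorem~\ref{THM3}, which tells us that a regular subspace shares the jumping measure $J$, the killing measure $k$, and the strongly local energy measures $\mu^c_{<u,v>}$ (on $\mathcal{F}'\cap C_\mathrm{c}$) with the original form. All four items are essentially ``the Beurling--Deny triple of $(\mathcal{E}',\mathcal{F}')$ is, in the relevant components, inherited from $(\mathcal{E},\mathcal{F})$.''

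For item~(1), recall that a regular Dirichlet form is local if and only if its jumping and killing measures both vanish (this is the characterization of locality via the Beurling--Deny decomposition: $\mathcal{E}(u,v)=\mathcal{E}^{(\mathrm{c})}(u,v)$ for all $u,v$, and $\mathcal{E}^{(\mathrm{c})}$ is strongly local). Since Theorem~\ref{THM3} gives $J'=J=0$ and $k'=k=0$, the form $(\mathcal{E}',\mathcal{F}')$ has $\mathcal{E}'=\mathcal{E}^{(\mathrm{c})'}$ on $\mathcal{F}'\cap C_\mathrm{c}$, hence (by regularity and closedness, extending to all of $\mathcal{F}'$) $\mathcal{E}'$ is strongly local, which in particular is local. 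This handles both the local and the strongly local claims at once: if $(\mathcal{E},\mathcal{F})$ is merely local its jumping and killing measures still vanish, so the same argument applies and $(\mathcal{E}',\mathcal{F}')$ is in fact strongly local, a fortiori local. For item~(2), ``pure-jumping type'' means $\mathcal{E}^{(\mathrm{c})}=0$ and $k=0$; then $k'=k=0$ by Theorem~\ref{THM3}, and for $u,v\in\mathcal{F}'\cap C_\mathrm{c}$ the measures $\mu^c_{<u,v>}$ vanish (since $\mathcal{E}^{(\mathrm{c})}=0$ forces all energy measures $\mu^c$ to be zero), so by the equality $\mu^c_{<u,v>}={\mu'}^c_{<u,v>}$ of Theorem~\ref{THM3} we get ${\mu'}^c_{<u,v>}=0$ on a core, hence $\mathcal{E}^{(\mathrm{c})'}=0$ by (\ref{IEFD}) and regularity; thus $(\mathcal{E}',\mathcal{F}')$ is pure-jumping type.

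For item~(3), ``pure-killing'' means $\mathcal{E}^{(\mathrm{c})}=0$ and $J=0$, so $\mathcal{E}(u,u)=\int_E \tilde u^2\,dk$ and the form is really a multiplication-type form; one first observes that for such a form $\mathcal{F}=L^2(E,m+k)$ (since the bilinear form is $\mathcal{E}_1(u,u)=\int u^2\,d(m+k)$ and closedness plus the Markovian property identify the domain as exactly those $L^2(E,m)$ functions with $\int u^2\,dk<\infty$; this is a short standard check). Then, for the subspace, $J'=J=0$ and $\mathcal{E}^{(\mathrm{c})'}=0$ (again by the $\mu^c$ equality on a core, as in item~(2)), and $k'=k$, so $\mathcal{E}'_1(u,u)=\int u^2\,d(m+k)$ on $\mathcal{F}'\cap C_\mathrm{c}$; since $\mathcal{F}'$ is closed and contains a core dense in $L^2(E,m+k)\cap C_\mathrm{c}$, completion forces $\mathcal{F}'=L^2(E,m+k)=\mathcal{F}$, and then $\mathcal{E}'=\mathcal{E}$ on $\mathcal{F}$, so the two forms coincide. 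Item~(4) is the contrapositive packaging of items~(2) and~(3): $J'=J$ and $k'=k$, so if $J\neq 0$ then $J'\neq 0$ and if $k\neq 0$ then $k'\neq 0$; nothing further is needed.

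I do not expect any serious obstacle here — the content is entirely bookkeeping with the Beurling--Deny triple. The only point requiring a little care is the passage from equality of the Beurling--Deny components on the core $\mathcal{F}'\cap C_\mathrm{c}$ to a statement about the full forms $(\mathcal{E}',\mathcal{F}')$; this is precisely the kind of density-and-closedness argument already used in the ``if'' direction of the proof of Theorem~\ref{THM3}, so I would invoke it rather than repeat it. The mildly nontrivial classical fact that a pure-killing regular Dirichlet form has domain exactly $L^2(E,m+k)$ (needed in item~(3)) can be cited from \cite{FU} or checked in two lines.
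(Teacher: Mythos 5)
Your overall strategy --- reading items (1), (2), (4) off Theorem~\ref{THM3} and handling (3) by identifying the $\mathcal{E}_1$-norm with the $L^2(E,m+k)$-norm and invoking regularity plus completeness --- is exactly the paper's route, and items (2), (3), (4) go through. But your treatment of item (1) contains a genuine error: you assert that a regular Dirichlet form is local if and only if both $J$ and $k$ vanish. That is the characterization of the \emph{strongly local} property. The local property (vanishing of $\mathcal{E}(u,v)$ for $u,v$ with disjoint compact supports) is equivalent to $J=0$ alone, with no condition on $k$: for disjointly supported $u,v$ the killing term $\int\tilde u\tilde v\,dk$ vanishes automatically and $\mathcal{E}^{(\mathrm{c})}(u,v)=0$ because $v\equiv 0$ on a neighborhood of $\mathrm{supp}[u]$. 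Consequently your derived claim that a regular subspace of a merely local form ``is in fact strongly local'' is false: the killed Brownian motion form $\mathcal{E}(u,v)=\tfrac12\int u'v'\,dx+\int uv\,dk$ with $k\neq 0$ is local, is a regular subspace of itself, and is not strongly local.

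The conclusion of item (1) is nevertheless recovered immediately with the correct dictionary (local $\Leftrightarrow J=0$; strongly local $\Leftrightarrow J=0$ and $k=0$): since Theorem~\ref{THM3} gives $J'=J$ and $k'=k$, each property passes to the subspace separately. One further small remark on item (3): the identification $\mathcal{F}=L^2(E,m+k)$ does not come from the Markovian property, as you suggest, but from regularity --- $\mathcal{F}\cap C_{\mathrm{c}}(E)$ is uniformly dense in $C_{\mathrm{c}}(E)$, hence dense in $L^2(E,m+k)$ in the norm $\|\cdot\|_{L^2(m+k)}=\|\cdot\|_{\mathcal{E}_1}$ --- combined with completeness of $(\mathcal{F},\mathcal{E}_1)$; this is precisely the paper's argument, and applying the same density-plus-completeness reasoning to $\mathcal{F}'\cap C_{\mathrm{c}}(E)$ yields $\mathcal{F}'=L^2(E,m+k)=\mathcal{F}$ as you conclude.
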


\begin{proof}
    Clearly {(1)(2) and (4)  hold} by \text{Theorem \ref{THM3}}.

    (3) When $\mathcal{E}^{(\mathrm{c})}=0$ and $J=0$ it is obvious that $\mathcal{E}(u,u)=\int u^2dk$ and {$\|u\|_{\mathcal{E}_1} := \mathcal{E}(u,u)^{\frac{1}{2}}= \| u\|_{L^2(m+k)}$} for any $u\in \mathcal{F}$. By the regularity of $\mathcal{E}$,  $\mathcal{F}\cap C_\mathrm{c}$ is dense in $C_\mathrm{c}$ with uniform norm, then it is also dense in $L^2(E,m+k)$ with norm $\| \cdot \|_{L^2(m+k)}=\|\cdot \|_{\mathcal{E}_1}$. For any $u\in L^2(E,m+k)$, there exists a sequence $u_n$ in $\mathcal{F}\cap C_\mathrm{c}$ such that $\|u_n-u\|_{\mathcal{E}_1}\rightarrow 0$. Hence $u\in \mathcal{F}$ and $\mathcal{F}=L^2(E,m+k)$. It is also easy to check that $\mathcal{F}'=L^2(E,m+k)$ which means $(\mathcal{E}',\mathcal{F}')=(\mathcal{E},\mathcal{F})$. That completes the proof.
\end{proof}

Before showing another corollary, we should give some necessary statements about the L\'evy-type Dirichlet forms. A L\'{e}vy-type Dirichlet form on $L^2(\mathbf{R}^d,dx)$ is generated by a symmetric convolution semigroup $\{\nu_t,t>0\}$ on $\mathbf{R}^d$. Its corresponding Markov process is just the so-called symmetric L\'{e}vy process. The celebrated L\'{e}vy-Khinchin formula under the symmetric assumption reads as follows:
\begin{equation}\label{5}
        \hat{\nu}_t(x)=\exp\{-t\psi(x)\}
\end{equation}
where
\begin{equation}
\psi(x)=\frac{1}{2}(Sx,x)+ \int_{\mathbf{R}^d}(1-\text{cos}(x,y))j(dy),
    \end{equation}
$S$ is a non-negative definite $d\times d$ symmetric matrix and $j$ is a symmetric Radon measure on $\mathbf{R}^d\setminus \{0\}$ such that $\int_{\mathbf{R}^d\setminus \{0\}} 1\wedge |x|^2 j(dx) <\infty$. The corresponding regular Dirichlet form is
\begin{equation}\label{Levy-Kinchin}
        \begin{split}
        \mathcal{D}[\mathcal{E}]&=\{u\in L^2(\mathbf{R}^d): \int_{\mathbf{R}^d} |\hat{u}|(x)^2\psi(x)dx<\infty\}, \\
               \mathcal{E}(u,v)&=\int_{\mathbf{R}^d} \hat{u}(x){\bar{\hat{v}}(x)}\psi(x)dx \quad \forall u,v\in \mathcal{D}[\mathcal{E}],
       \end{split}
\end{equation}
where $\hat{u}$ means the Fourier transform of $u$. Note that the symmetric convolution semigroup, as well as the regular Dirichlet form, are characterized by the pair $(S,j)$.

The {strongly local} part of $(\mathcal{E},\mathcal{D}[\mathcal{E}])$ which is determinate by $S$ is:
\begin{equation}\label{ESQ}
    \mathcal{E}^\mathrm{S}(u,u)=\frac{1}{2}\int(S\nabla u,\nabla u)dx
\end{equation}
and the jumping part can be rewritten as
\begin{equation}\label{EJQ}
    \mathcal{E}^j(u,u)=\frac{1}{2}\int_{\mathbf{R}^d\times (\mathbf{R}^d\setminus \{0\})}(u(x+y)-u(x))^2j(dy)dx.
\end{equation}
Of course $\mathcal{E}=\mathcal{E}^\mathrm{S}+\mathcal{E}^j$. On the other hand, as $S$ is a non-negative definite symmetric matrix there exists an orthogonal matrix $P$, i.e. $P'=P^{-1}$, such that
   { \begin{equation}\label{matrix}
     P'SP=\left(
               \begin{array}{cccccc}
                 \lambda_1 & 0 & \cdots & \cdots & \cdots & 0 \\
                 0 & \cdots & 0 & \cdots & \cdots & 0 \\
                 0 & 0 & \lambda_r & 0 & \cdots & 0 \\
                 0 & \cdots & \cdots & 0 & \cdots & 0 \\
                 0 & \cdots & \cdots & \cdots & \cdots & 0 \\
                 0 & \cdots & \cdots & \cdots & \cdots & 0 \\
               \end{array}
             \right)_{d\times d}=\text{diag}\{\lambda_1,\cdots, \lambda_r, 0,\cdots, 0\}.
    \end{equation}}
It is just the diagonalization of $S$ with the rank $r$ and $\lambda_1, \cdots, \lambda_r$ are its positive eigenvalues, i.e. $\lambda_i>0$ for any $1\leqslant i \leqslant r$. Note that $S\neq 0$ means $r\geq 1$. For any $u\in \mathcal{D}[\mathcal{E}]$, \text{supp}ose $u(x)= \varphi(Px)$ for a unique function $\varphi$. Then
    \[\begin{split}
     \mathcal{E}^\mathrm{S}(u,u)=&\frac{1}{2}\int(S\nabla \varphi(Px),\nabla \varphi(Px))dx \\
                            =&\frac{1}{2}\int(P'SP\nabla_y \varphi(y),\nabla_y \varphi(y))dP'y \\
                            =&\frac{1}{2}\int(P'SP\nabla_y \varphi(y),\nabla_y \varphi(y))dy,
    \end{split}\]
hence,
    \begin{equation}\label{ES}
        \mathcal{E}^\mathrm{S}(u,u)= \frac{1}{2}\sum_{i=1}^r \lambda_i \int(\frac{\partial{\varphi}}{\partial{y_i}})^2dy.
    \end{equation}

\begin{corollary}
 If $(\mathcal{E},\mathcal{F})$ is a non-pure-jumping L\'{e}vy-type Dirichlet form, i.e. the scatter coefficient $S\neq 0$ and $(\mathcal{E}',\mathcal{F}')\prec (\mathcal{E},\mathcal{F})$. Then $(\mathcal{E}',\mathcal{F}')$ mustn't be pure-jumping type, in other words, the {strongly local} part of $(\mathcal{E}',\mathcal{F}')$ never disappears.
\end{corollary}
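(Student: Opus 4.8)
The plan is to argue by contradiction, combining Theorem~\ref{THM3} with the explicit formula (\ref{ES}) for the strongly local part of a L\'evy-type form. First I would record a preliminary fact: the symbol $\psi$ in (\ref{5}) contains no constant term, so a L\'evy-type Dirichlet form as in (\ref{Levy-Kinchin}) has vanishing killing measure, $k=0$. Consequently, if $(\mathcal{E}',\mathcal{F}')\prec(\mathcal{E},\mathcal{F})$, Theorem~\ref{THM3} applies and gives $\mathcal{F}'\cap C_{\mathrm c}\subset\mathcal{F}\cap C_{\mathrm c}$, $J'=J$ and $k'=k=0$; in particular every $u\in\mathcal{F}'\cap C_{\mathrm c}$ lies in $\mathcal{F}=\mathcal{D}[\mathcal{E}]$, so the formula (\ref{ES}) is available for such $u$.

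Now suppose, for contradiction, that $(\mathcal{E}',\mathcal{F}')$ is pure-jumping, i.e. $\mathcal{E}^{(\mathrm c)\prime}=0$. Fix $u\in\mathcal{F}'\cap C_{\mathrm c}$. Since $u$ is continuous it coincides with its own quasi-continuous version for both $\mathcal{E}$ and $\mathcal{E}'$, so the jumping terms appearing in the Beurling--Deny decomposition (\ref{BDC}) of $\mathcal{E}(u,u)$ and of $\mathcal{E}'(u,u)$ are literally the same because $J=J'$; combining this with $k=k'=0$, with $\mathcal{E}^{(\mathrm c)\prime}=0$, and with $\mathcal{E}(u,u)=\mathcal{E}'(u,u)$ (which holds since $(\mathcal{E}',\mathcal{F}')\prec(\mathcal{E},\mathcal{F})$), I would conclude $\mathcal{E}^{\mathrm S}(u,u)=\mathcal{E}^{(\mathrm c)}(u,u)=0$ for every $u\in\mathcal{F}'\cap C_{\mathrm c}$.

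Next I would feed this into (\ref{ES}): writing $u(x)=\varphi(Px)$, the identity $\mathcal{E}^{\mathrm S}(u,u)=\tfrac12\sum_{i=1}^r\lambda_i\int(\partial\varphi/\partial y_i)^2\,dy=0$, together with the hypothesis $S\neq0$ (so that $r\geq1$ and $\lambda_1>0$), forces $\partial\varphi/\partial y_1=0$ almost everywhere; the $L^2$ weak derivative here genuinely exists because $u\in\mathcal{D}[\mathcal{E}]$ and $\psi(x)\geq\tfrac12(Sx,x)$. The only slightly delicate step is then to deduce $u\equiv0$: a function in $C_{\mathrm c}(\mathbf{R}^d)$ whose weak derivative in a fixed coordinate direction vanishes is a.e. equal to a function not depending on that coordinate, yet it also has compact support along almost every line in that direction, so that function must vanish, and hence $u\equiv0$ by continuity. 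This contradicts the regularity of $(\mathcal{E}',\mathcal{F}')$, which guarantees that $\mathcal{F}'\cap C_{\mathrm c}$ is dense in $C_{\mathrm c}(\mathbf{R}^d)$ and therefore contains nonzero functions. Hence $\mathcal{E}^{(\mathrm c)\prime}\neq0$, i.e. the strongly local part of $(\mathcal{E}',\mathcal{F}')$ never disappears. The expected main obstacle is precisely the compact-support argument in this last paragraph, which is the one place requiring an honest (though standard) measure-theoretic estimate rather than bookkeeping with the Beurling--Deny triples.
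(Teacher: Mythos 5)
Your proposal is correct and follows essentially the same route as the paper: apply Theorem~\ref{THM3} to identify the jumping and killing parts, deduce $\mathcal{E}^{\mathrm S}(u,u)=0$ for $u\in\mathcal{F}'\cap C_{\mathrm c}$, use \eqref{ES} to get a vanishing weak partial derivative, and conclude $u\equiv0$ from continuity plus compact support, contradicting the uniform density of $\mathcal{F}'\cap C_{\mathrm c}$ in $C_{\mathrm c}(\mathbf{R}^d)$. The paper merely carries out your last ``delicate step'' in more detail, testing against product functions $\psi_1(y_1)\psi_2(\bar y)$ and citing a Sobolev-space lemma to show $\varphi$ is independent of $y_1$ before invoking compact support.
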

\begin{proof} Suppose that $(\mathcal{E},\mathcal{F})$ is just the L\'{e}vy Dirichlet form (\ref{Levy-Kinchin}) such that $S\neq 0$, i.e. $r\geq 1$ and $(\mathcal{E}',\mathcal{F}')\prec (\mathcal{E},\mathcal{F})$. If $(\mathcal{E}',\mathcal{F}')$ is a pure-jumping Dirichlet form, then $\mathcal{E}^\mathrm{S}(u,u)=0$ for any $u\in \mathcal{F}'$. For any $u(x)=\varphi(Px) \in \mathcal{F}'\cap C_\mathrm{c}$, it follows from (\ref{ES}) that
    \[
        \int (\frac{\partial \varphi}{\partial y_1})^2dy=0
    \]
and hence $\frac{\partial \varphi}{\partial y_1}=0$, a.e. Note that $\frac{\partial \varphi}{\partial y_1}$ is the weak derivative of $\varphi$.

Set $y=(y_1,\bar{y})\in \mathbf{R}^d$, i.e. $\bar{y}\in {\mathbf{R}^{d-1}}$ and $\varphi_{\bar{y}}(y_1):= \varphi(y_1,\bar{y})$. Take any $\psi_1(y_1)\in {C_\mathrm{c}^{\infty}(\mathbf{R})}, \psi_2(\bar{y})\in {C_\mathrm{c}^{\infty}(\mathbf{R}^{d-1})}$, and write $\psi(y):=\psi_1(y_1)\psi_2(\bar{y})\in C_\mathrm{c}^{\infty}(\mathbf{R}^d)$. By the definition of weak derivative,
    \[
        0=(\frac{\partial \varphi}{\partial y_1},\psi)=-(\varphi,\psi_1'\psi_2)=-\int \psi_2(\bar{y})d\bar{y}\int \varphi_{\bar{y}}(y_1)\psi_1'(y_1)dy_1.
    \]
Because $\int \varphi_{\bar{y}}(y_1)\psi_1'(y_1)dy_1$ is continuous with respect to $\bar{y}$ and $\psi_2(\bar{y})$ is arbitrary in ${C_\mathrm{c}^{\infty}(\mathbf{R}^{d-1})}$, we have
    \[
        \int \varphi_{\bar{y}}(y_1)\psi_1'(y_1)dy_1=0,\text{ for any fix }\bar{y}\in {\mathbf{R}^{d-1}}.
    \]
Then $0=\int \varphi_{\bar{y}}(y_1)\psi_1'(y_1)dy_1=-(\varphi_{\bar{y}}'(y_1),\psi_1(y_1))$. It follows from \text{Lemma 3.31} and \text{Corollary 3.32} of     \cite{SS}  that $\varphi_{\bar{y}}(y_1)$ is independent of $y_1$. But $\varphi_{\bar{y}}$ is continuous, hence {$\varphi_{\bar{y}}(y_1)\equiv \text{const.}$ for each $\bar{y}$} and we have
    \[
        \varphi(y_1,\bar{y})=\bar{\varphi}(\bar{y}),
    \]
for some {$\bar{\varphi}\in C_\mathrm{c}(\mathbf{R}^{d-1})$, which implies $\varphi \equiv 0$ because of $\varphi\in C_c(\mathbf{R}^d)$}. This contradicts that $\mathcal{F}'\cap C_\mathrm{c}(\mathbf{R}^d)$ is dense in $C_\mathrm{c}(\mathbf{R}^d)$ with uniform norm. We then conclude that $(\mathcal{E}',\mathcal{F}')$ mustn't be pure-jumping type.
\end{proof}

\subsection{Probabilistic transformations}\label{IUST}

An $m$-measurable set $A\subset E$ is said to be $(T_t)$-invariant if $T_t(1_Af)=1_A\cdot T_tf$, $m$-a.e for any $f\in L^2(E)$ and $t>0$. A Dirichlet form with semigroup $(T_t)_{t>0}$ is called {\emph{$m$-irreducible}} if every $(T_t)$-invariant set $A$ is $m$-trivial, i.e. $m(A)=0$, or $m(E\backslash A)=0.$ In this section we will discuss several celebrated probabilistic transformations.

%Two regular Dirichlet forms are called to be equivalent if their regular subspaces are one-to-one correspondence. Our results show that after any of the following transformations the Dirichlet form is equivalent to the original one. This is a powerful tool to deal with the regular-subspaces problem.

\subsubsection{Spatial homeomorphous transformation}

Let $(\mathcal{E},\mathcal{F})$ be a regular Dirichlet form on $L^2(E,m)$. $\hat{E}$ is another topological space and there exists a homeomorphism mapping $j:E\rightarrow \hat{E}$. Then $\hat{E}$ is also a locally compact {separable metric space}. Set $\hat{m}:= m\circ j^{-1}$ which is a Radon measure fully supported on $\hat{E}$. Define
\begin{equation}\label{SHT}
\begin{split}
 \hat{\mathcal{F}}&:= \{\hat{u}\in L^2(\hat{E},\hat{m}): \hat{u}\circ j\in \mathcal{F}    \}, \\
  \hat{\mathcal{E}}(\hat{u},\hat{v})&:= \mathcal{E}(\hat{u}\circ j,\hat{v}\circ j),\quad \forall u,v\in \hat{\mathcal{F}}.
\end{split}\end{equation}
 We denote $(\hat{\mathcal{E}},\hat{\mathcal{F}})$ by $j(\mathcal{E},\mathcal{F})$. Note that $(\mathcal{E},\mathcal{F})=j^{-1}(\hat{\mathcal{E}},\hat{\mathcal{F}})$.

\begin{lemma}\label{LEMMA1}
Let $L^2(E,m)$, $L^2(\hat{E},\hat{m})$, $(\mathcal{E},\mathcal{F})$ and $(\hat{\mathcal{E}},\hat{\mathcal{F}})$ be above. 
\begin{enumerate}
               \item $(\hat{\mathcal{E}},\hat{\mathcal{F}})$ is a regular Dirichlet form on $L^2(\hat{E},\hat{m})$.
               \item $(\hat{\mathcal{E}},\hat{\mathcal{F}})$ is irreducible if and only if $(\mathcal{E},\mathcal{F})$ is irreducible.
               \item $B$ is $\mathcal{E}$-polar if and only if $\hat{B}=j(B)$ is $\hat{\mathcal{E}}$-polar, $u$ is $\mathcal{E}$-quasi-continuous if and only if $\hat{u}=u\circ j^{-1}$ is $\hat{\mathcal{E}}$-quasi-continuous.
             \end{enumerate}
\end{lemma}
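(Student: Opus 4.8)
The plan is to verify the three assertions of Lemma \ref{LEMMA1} by transporting every structure on $L^2(E,m)$ to $L^2(\hat E,\hat m)$ along the homeomorphism $j$, exploiting that $j$ induces an isometric isomorphism $U\colon L^2(\hat E,\hat m)\to L^2(E,m)$, $U\hat u:=\hat u\circ j$, and that $U$ also maps $C_{\mathrm c}(\hat E)$ bijectively onto $C_{\mathrm c}(E)$ preserving the uniform norm. The point is that $\hat{\mathcal E}$ is by construction the pullback form $\hat{\mathcal E}(\hat u,\hat v)=\mathcal E(U\hat u,U\hat v)$ on $\hat{\mathcal F}=U^{-1}\mathcal F$, so $U$ is a unitary map intertwining $(\mathcal E,\mathcal F)$ and $(\hat{\mathcal E},\hat{\mathcal F})$; almost every claim then reduces to a transparent transfer argument.

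For (1): closedness of $\hat{\mathcal E}$ follows since $U$ carries the Hilbert norm $\mathcal E_1^{1/2}$ to $\hat{\mathcal E}_1^{1/2}$ exactly, so an $\hat{\mathcal E}_1$-Cauchy sequence in $\hat{\mathcal F}$ is sent by $U$ to an $\mathcal E_1$-Cauchy sequence in $\mathcal F$, whose limit lies in $\mathcal F$ and pulls back; density of $\hat{\mathcal F}$ in $L^2(\hat E,\hat m)$ is immediate because $U$ is a surjective isometry of the $L^2$-spaces and $\mathcal F$ is $L^2$-dense. The Markovian (unit-contraction) property transfers because composition with a normal contraction commutes with $U$: if $\hat u\in\hat{\mathcal F}$ and $\hat v$ is a normal contraction of $\hat u$, then $U\hat v$ is the same normal contraction of $U\hat u\in\mathcal F$, hence $U\hat v\in\mathcal F$ and $\hat{\mathcal E}(\hat v,\hat v)=\mathcal E(U\hat v,U\hat v)\le\mathcal E(U\hat u,U\hat u)=\hat{\mathcal E}(\hat u,\hat u)$. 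Finally, regularity: take a core $\mathcal C\subset\mathcal F\cap C_{\mathrm c}(E)$ of $(\mathcal E,\mathcal F)$; then $U^{-1}\mathcal C\subset\hat{\mathcal F}\cap C_{\mathrm c}(\hat E)$, and since $U^{-1}$ is an $\hat{\mathcal E}_1^{1/2}$-isometry onto a dense subset of $\hat{\mathcal F}$ and a uniform-norm isometry onto a dense subset of $C_{\mathrm c}(\hat E)$, $U^{-1}\mathcal C$ is a core of $(\hat{\mathcal E},\hat{\mathcal F})$. (One should note in passing that $\hat E$, being homeomorphic to $E$, is again a locally compact separable metric space and $\hat m$ is Radon and fully supported, so the framework applies; this is already remarked before the statement.)

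For (2): the semigroups $(T_t)$ and $(\hat T_t)$ are related by $\hat T_t=U^{-1}T_tU$ since they are the semigroups generated by unitarily equivalent Dirichlet forms. A set $\hat A\subset\hat E$ is $(\hat T_t)$-invariant iff $\mathbf 1_{\hat A}\hat T_t\hat f=\hat T_t(\mathbf 1_{\hat A}\hat f)$ for all $\hat f$, which, applying $U$ and using $U(\mathbf 1_{\hat A}\hat g)=\mathbf 1_{j(\hat A)}\,(U\hat g)$, is equivalent to $j(\hat A)$ being $(T_t)$-invariant; and $\hat m(\hat A)=m(j(\hat A))$, so $\hat m$-triviality of $\hat A$ is equivalent to $m$-triviality of $j(\hat A)$. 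Hence irreducibility transfers both ways. (Here $A\leftrightarrow j(A)$ gives a bijection between invariant sets.)

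For (3): one shows that $j$ maps $\mathcal E$-nests to $\hat{\mathcal E}$-nests and vice versa. If $\{F_k\}$ is an $\mathcal E$-nest, set $\hat F_k:=j(F_k)$, which is closed and increasing; the identity $U(\hat{\mathcal F}_{\hat F_k})=\mathcal F_{F_k}$ (because $\hat u=0$ $\hat m$-a.e.\ on $\hat E\setminus\hat F_k$ iff $U\hat u=0$ $m$-a.e.\ on $E\setminus F_k$, using $\hat m=m\circ j^{-1}$) together with the $\hat{\mathcal E}_1^{1/2}$-isometry $U$ shows $\bigcup_k\hat{\mathcal F}_{\hat F_k}$ is $\hat{\mathcal E}_1^{1/2}$-dense in $\hat{\mathcal F}$ iff $\bigcup_k\mathcal F_{F_k}$ is $\mathcal E_1^{1/2}$-dense in $\mathcal F$. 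From this the claims about polar sets ($\hat B=j(B)\subset\bigcap_k(\hat E\setminus\hat F_k)$ iff $B\subset\bigcap_k(E\setminus F_k)$) and about quasi-continuity ($\hat u|_{\hat F_k}$ finite and continuous iff $(U\hat u)|_{F_k}$ is, since $j|_{F_k}\colon F_k\to\hat F_k$ is a homeomorphism) follow directly.

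I do not anticipate a genuine obstacle here; the only point requiring a little care is the bookkeeping in (1), namely checking that $j$ indeed carries the axioms of "locally compact separable metric space," "Radon," and "fully supported" over to $\hat E$ and $\hat m$, and that a \emph{core} (rather than merely $\mathcal F\cap C_{\mathrm c}$) transfers correctly so that regularity, not just density, is preserved — but each of these is routine once the isometry $U$ and its compatibility with composition by normal contractions and with multiplication by indicators of $j$-images are recorded.
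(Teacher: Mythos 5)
Your proposal is correct and matches the paper's (largely omitted) argument: the paper declares (1) and (3) trivial transfer facts, and for (2) it uses exactly your key identity $1_{\hat A}\cdot\hat u=(1_A\cdot u)\circ j^{-1}$, only invoking the form-level characterization of invariant sets (Proposition 2.1.6 of \cite{CF}) where you verify invariance directly at the semigroup level via $\hat T_t=U^{-1}T_tU$. Both routes are equivalent; your write-up simply makes explicit the isometry $U$ and the nest bookkeeping that the paper leaves to the reader.
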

\begin{proof}
The proofs of (1)(3) are trivial and we omit them.

For (2) we need only to prove that if $A$ is $(\mathcal{E},\mathcal{F})$-invariant, $\hat{A}=j(A)$ is $(\hat{\mathcal{E}},\hat{\mathcal{F}})$-invariant. This can be checked by \text{Propsiotion 2.1.6} of   \cite{CF} and for any $ \hat{u}\in \hat{\mathcal{F}}$,
    \[
        1_{\hat{A}}\cdot \hat{u}=1_A\circ j^{-1}\cdot u\circ j^{-1}=(1_A\cdot u)\circ j^{-1},
    \]
    where $u=\hat{u}\circ j\in \mathcal{F}$.
\end{proof}

\begin{proposition}\label{PROP2}
   { $(\mathcal{E},\mathcal{F})$ and $(\hat{\mathcal{E}},\hat{\mathcal{F}})$ are two Dirichlet forms with the same structure of regular subspaces with respect to the spatial homeomorphic transformation $j$. Then for any $(\hat{\mathcal{E}}',\hat{\mathcal{F}}')\prec (\hat{\mathcal{E}},\hat{\mathcal{F}})$, there exists a unique  $(\mathcal{E}',\mathcal{F}')\prec (\mathcal{E},\mathcal{F})$ such that $(\hat{\mathcal{E}}',\hat{\mathcal{F}}')=j(\mathcal{E}',\mathcal{F}')$.}
\end{proposition}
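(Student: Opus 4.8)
The plan is to exploit the fact that the spatial homeomorphism $j$ sets up a bijective isometry between the relevant Hilbert spaces and, simultaneously, a homeomorphism between the underlying topological spaces, so that \emph{every} structure entering the definition of a regular Dirichlet form is transported faithfully in both directions. Concretely, given $(\hat{\mathcal{E}}',\hat{\mathcal{F}}')\prec(\hat{\mathcal{E}},\hat{\mathcal{F}})$, I would define $\mathcal{F}':=\{u\in L^2(E,m):u\circ j^{-1}\in\hat{\mathcal{F}}'\}$ and $\mathcal{E}'(u,v):=\hat{\mathcal{E}}'(u\circ j^{-1},v\circ j^{-1})$, i.e. $(\mathcal{E}',\mathcal{F}')=j^{-1}(\hat{\mathcal{E}}',\hat{\mathcal{F}}')$. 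The first step is then to check that $(\mathcal{E}',\mathcal{F}')$ is itself a regular Dirichlet form on $L^2(E,m)$; this is exactly Lemma~\ref{LEMMA1}(1) applied with the roles of $E$ and $\hat{E}$ interchanged (and with the homeomorphism $j^{-1}$), since $j(\mathcal{E}',\mathcal{F}')=(\hat{\mathcal{E}}',\hat{\mathcal{F}}')$ and $j^{-1}j=\mathrm{id}$.

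The second step is to verify $(\mathcal{E}',\mathcal{F}')\prec(\mathcal{E},\mathcal{F})$. For the domain inclusion: if $u\in\mathcal{F}'$ then $u\circ j^{-1}\in\hat{\mathcal{F}}'\subset\hat{\mathcal{F}}$, and $u\circ j^{-1}\in\hat{\mathcal{F}}$ means by~(\ref{SHT}) that $(u\circ j^{-1})\circ j=u\in\mathcal{F}$. For the form identity: for $u,v\in\mathcal{F}'$ we have, using the definition of $\mathcal{E}'$, then $(\hat{\mathcal{E}}',\hat{\mathcal{F}}')\prec(\hat{\mathcal{E}},\hat{\mathcal{F}})$, then~(\ref{SHT}),
\[
\mathcal{E}'(u,v)=\hat{\mathcal{E}}'(u\circ j^{-1},v\circ j^{-1})=\hat{\mathcal{E}}(u\circ j^{-1},v\circ j^{-1})=\mathcal{E}\big((u\circ j^{-1})\circ j,(v\circ j^{-1})\circ j\big)=\mathcal{E}(u,v).
\]
Hence~(\ref{DEF1}) holds and $(\mathcal{E}',\mathcal{F}')\prec(\mathcal{E},\mathcal{F})$. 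By construction $j(\mathcal{E}',\mathcal{F}')=(\hat{\mathcal{E}}',\hat{\mathcal{F}}')$, so existence is settled.

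The third step is uniqueness. Suppose $(\mathcal{E}'',\mathcal{F}'')\prec(\mathcal{E},\mathcal{F})$ also satisfies $j(\mathcal{E}'',\mathcal{F}'')=(\hat{\mathcal{E}}',\hat{\mathcal{F}}')$. Unwinding~(\ref{SHT}) for $j$, the equality $j(\mathcal{E}'',\mathcal{F}'')=j(\mathcal{E}',\mathcal{F}')$ forces $\{\hat{u}:\hat{u}\circ j\in\mathcal{F}''\}=\{\hat{u}:\hat{u}\circ j\in\mathcal{F}'\}$ and $\mathcal{E}''(\hat{u}\circ j,\hat{v}\circ j)=\mathcal{E}'(\hat{u}\circ j,\hat{v}\circ j)$; since $\hat{u}\mapsto\hat{u}\circ j$ is a bijection from functions on $\hat{E}$ onto functions on $E$, this gives $\mathcal{F}''=\mathcal{F}'$ and $\mathcal{E}''=\mathcal{E}'$. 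So the preimage regular subspace is unique, and in fact $j$ is a bijection between the set of regular subspaces of $(\mathcal{E},\mathcal{F})$ and the set of regular subspaces of $(\hat{\mathcal{E}},\hat{\mathcal{F}})$, which is precisely the assertion that the two forms have ``the same structure of regular subspaces'' — so one could equally read the proposition as saying this phrase is well-defined here.

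Frankly, there is no serious obstacle: every step is a routine transport-of-structure argument, and the only thing requiring the slightest care is being explicit that $j$ being a homeomorphism (not merely a measurable bimeasurable map) is what guarantees $\hat{\mathcal{F}}'\cap C_c(\hat E)$ transports to a set dense in $C_c(E)$ in uniform norm, so that regularity — and not just the closed-Markovian-form property — is preserved in both directions. I would state that once and refer back to Lemma~\ref{LEMMA1}(1),(3) rather than re-proving it. The mild subtlety worth a sentence is that $\mathcal{E}'$-quasi-continuous versions and $\hat{\mathcal{E}}'$-quasi-continuous versions correspond under $j$ by Lemma~\ref{LEMMA1}(3), so no ambiguity about quasi-continuous representatives creeps into the form identity above.
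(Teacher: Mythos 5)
Your proposal is correct and follows essentially the same route as the paper, which simply observes that $j$ is reversible and that $(\mathcal{E}',\mathcal{F}')\prec(\mathcal{E},\mathcal{F})$ if and only if $j(\mathcal{E}',\mathcal{F}')\prec(\hat{\mathcal{E}},\hat{\mathcal{F}})$; you have merely written out the transport-of-structure details (regularity via Lemma~\ref{LEMMA1}, the form identity, and uniqueness from bijectivity of $\hat u\mapsto\hat u\circ j$) that the paper leaves as trivial.
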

\begin{proof}
 Note that $j$ is reversible. So the conclusions are trivial because $(\mathcal{E}',\mathcal{F}')\prec (\mathcal{E},\mathcal{F})$ if and only if $(\hat{\mathcal{E}'},\hat{\mathcal{F}'}):= j(\mathcal{E}',\mathcal{F}')\prec (\hat{\mathcal{E}},\hat{\mathcal{F}})$
\end{proof}

Combining with \text{Lemma \ref{LEMMA1}(2)},  we have

\begin{corollary}\label{COR3}
    Let $(\mathcal{E},\mathcal{F})$ and $(\hat{\mathcal{E}},\hat{\mathcal{F}})$ be above. For any irreducible $(\hat{\mathcal{E}}',\hat{\mathcal{F}}')\prec (\hat{\mathcal{E}},\hat{\mathcal{F}})$, there exists a unique irreducible $(\mathcal{E}',\mathcal{F}')\prec (\mathcal{E},\mathcal{F})$ such that $(\hat{\mathcal{E}}',\hat{\mathcal{F}}')=j(\mathcal{E}',\mathcal{F}')$.
\end{corollary}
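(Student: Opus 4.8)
The statement to prove is Corollary~\ref{COR3}, which combines Proposition~\ref{PROP2} with Lemma~\ref{LEMMA1}(2).

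The plan is to invoke Proposition~\ref{PROP2} to obtain, for a given irreducible regular subspace $(\hat{\mathcal{E}}',\hat{\mathcal{F}}')\prec(\hat{\mathcal{E}},\hat{\mathcal{F}})$, the unique regular subspace $(\mathcal{E}',\mathcal{F}')\prec(\mathcal{E},\mathcal{F})$ with $(\hat{\mathcal{E}}',\hat{\mathcal{F}}')=j(\mathcal{E}',\mathcal{F}')$; the only remaining content is that irreducibility is transferred along $j$. First I would observe that, since $j(\mathcal{E}',\mathcal{F}')=(\hat{\mathcal{E}}',\hat{\mathcal{F}}')$, we equivalently have $(\mathcal{E}',\mathcal{F}')=j^{-1}(\hat{\mathcal{E}}',\hat{\mathcal{F}}')$, so the pair $(\mathcal{E}',\mathcal{F}')$ and $(\hat{\mathcal{E}}',\hat{\mathcal{F}}')$ stand in exactly the relation of $(\mathcal{E},\mathcal{F})$ and $(\hat{\mathcal{E}},\hat{\mathcal{F}})$ in the setup preceding Lemma~\ref{LEMMA1}, with the same homeomorphism $j$. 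Therefore Lemma~\ref{LEMMA1}(2) applies verbatim to this pair and yields that $(\mathcal{E}',\mathcal{F}')$ is irreducible if and only if $(\hat{\mathcal{E}}',\hat{\mathcal{F}}')$ is irreducible. Since the latter is assumed irreducible, so is $(\mathcal{E}',\mathcal{F}')$.

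For uniqueness among irreducible subspaces, I would note that uniqueness already holds among all regular subspaces by Proposition~\ref{PROP2}: if $(\mathcal{E}'_1,\mathcal{F}'_1)$ and $(\mathcal{E}'_2,\mathcal{F}'_2)$ are two regular subspaces of $(\mathcal{E},\mathcal{F})$ with $j(\mathcal{E}'_1,\mathcal{F}'_1)=j(\mathcal{E}'_2,\mathcal{F}'_2)=(\hat{\mathcal{E}}',\hat{\mathcal{F}}')$, then applying $j^{-1}$ forces $(\mathcal{E}'_1,\mathcal{F}'_1)=(\mathcal{E}'_2,\mathcal{F}'_2)$, because $j^{-1}\circ j$ is the identity on forms by the identity $(\mathcal{E},\mathcal{F})=j^{-1}(\hat{\mathcal{E}},\hat{\mathcal{F}})$ noted after \eqref{SHT}. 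Hence the irreducible lift is a fortiori unique.

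There is essentially no obstacle here: the corollary is a routine splicing of two already-proved facts, and the only thing to be careful about is to state clearly that Lemma~\ref{LEMMA1}(2) is being applied to the restricted pair $(\mathcal{E}',\mathcal{F}')$, $(\hat{\mathcal{E}}',\hat{\mathcal{F}}')$ rather than to the original pair --- which is legitimate precisely because $(\hat{\mathcal{E}}',\hat{\mathcal{F}}')=j(\mathcal{E}',\mathcal{F}')$ reproduces the hypotheses of that lemma with the same $j$. Thus the proof is just: apply Proposition~\ref{PROP2}; then apply Lemma~\ref{LEMMA1}(2) to conclude irreducibility is preserved in both directions; conclude.
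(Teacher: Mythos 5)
Your proof is correct and follows exactly the route the paper intends: the paper states the corollary with the phrase ``Combining with Lemma \ref{LEMMA1}(2), we have,'' i.e.\ it obtains the unique lift from Proposition \ref{PROP2} and transfers irreducibility via Lemma \ref{LEMMA1}(2) applied to the pair $(\mathcal{E}',\mathcal{F}')$, $(\hat{\mathcal{E}}',\hat{\mathcal{F}}')=j(\mathcal{E}',\mathcal{F}')$. Your explicit remark that the lemma is being applied to the restricted pair, which satisfies the same hypotheses with the same homeomorphism $j$, is precisely the (unstated) justification the paper relies on.
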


\subsubsection{Time changes with full quasi support}

Assume that $(\mathcal{E},\mathcal{F})$ is a regular Dirichlet form on $L^2(E,m)$ and $X$ is its corresponding Hunt process.  Denote the set of Radon smooth measures on $E$ with full quasi support by $S_E$. Let $\mu\in S_E$, $(A_t)_{t\geq 0}$ its corresponding positive continuous additive functional and $(\tau_t)_{t\geq 0}$ the inverse of $(A_t)_{t\geq 0}$. Then the time-changed process $(X_{\tau_t})_{t\geq 0}$ is $\mu$-symmetric and its Dirichlet form $(\check{\mathcal{E}},\check{\mathcal{F}})$ on $L^2(E,\mu)$ can be presented as
\begin{equation}\label{12}
\begin{split}
    \check{\mathcal{F}}&=\mathcal{F}_{\mathrm{e}}\cap L^2(E,\mu),\\
\check{\mathcal{E}}(u,u)&=\mathcal{E}(u,u),\quad { u\in \check{\mathcal{F}}},
\end{split}\end{equation}
where $\mathcal{F}_{\mathrm{e}}$ is the extended space of $(\mathcal{E},\mathcal{F})$. Moreover $(\check{\mathcal{E}},\check{\mathcal{F}})$ is regular as $\mu$ is Radon and its extended space $\check{\mathcal{F}}_\mathrm{e}=\mathcal{F}_{\mathrm{e}}$. Note that the quasi-notions (nests, polar sets and quasi-continuous functions) of $(\mathcal{E},\mathcal{F})$ and $(\check{\mathcal{E}},\check{\mathcal{F}})$ are equivalent. We denote $(\check{\mathcal{E}},\check{\mathcal{F}})$ by $\mu(\mathcal{E},\mathcal{F})$ if $\mu\in S_E$. The above procedures are reversible because $m$ is a Radon smooth measure with quasi support $E$ with respect to $(\check{\mathcal{E}},\check{\mathcal{F}})$. In other words, $(\mathcal{E},\mathcal{F})=m(\check{\mathcal{E}},\check{\mathcal{F}})$.

Similarly $\check{S}_E$ (resp. $S'_E$) represents the set of all Radon smooth measures on $E$ with full quasi support with respect to $(\check{\mathcal{E}},\check{\mathcal{F}})$ (resp. $(\mathcal{E}',\mathcal{F}')$ which is another Dirichlet form).

\begin{lemma}\label{LEMMA2}
Let $(\mathcal{E},\mathcal{F})$ and {$(\mathcal{E}',\mathcal{F}')$} be two Dirichlet forms on $L^2(E,m)$.
\begin{enumerate}
\item It holds that $\mathcal{F}'\subset \mathcal{F}$ if and only if $\mathcal{F}'_\mathrm{e}\subset \mathcal{F}_{\mathrm{e}}$. In particular, $\mathcal{F}'\subsetneqq \mathcal{F}$ if and only if $\mathcal{F}'_\mathrm{e}\subsetneqq \mathcal{F}_{\mathrm{e}}$.
               \item  If $(\mathcal{E}',\mathcal{F}')\prec(\mathcal{E},\mathcal{F})$ and $\mu \in S_E$, then $\mu\in S'_E$.
               \item Assume $(\mathcal{E},\mathcal{F})$ is regular on $L^2(E,m)$ and $\mu\in S_E$. Then $(\mathcal{E},\mathcal{F})$ is irreducible if and only if $\mu(\mathcal{E},\mathcal{F})$ is irreducible.
\end{enumerate}
\end{lemma}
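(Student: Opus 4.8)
The plan is to reduce each of the three assertions to standard facts about extended Dirichlet spaces and time changes (Fukushima–Oshima–Takeda, Chapters 1.5, 5.2 and 6.2), exploiting the fact that the forms $\mathcal{E}$ and $\mathcal{E}'$ coincide on $\mathcal{F}'$ and that, on the extended spaces, the form values are determined without reference to $m$.

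For (1): I would recall that the extended Dirichlet space $\mathcal{F}_\mathrm{e}$ consists of all $m$-measurable $u$ admitting an $\mathcal{E}$-Cauchy approximating sequence $\{u_n\}\subset\mathcal{F}$ with $u_n\to u$ $m$-a.e., and that $\mathcal{F}=\mathcal{F}_\mathrm{e}\cap L^2(E,m)$. The forward implication $\mathcal{F}'\subset\mathcal{F}\Rightarrow\mathcal{F}'_\mathrm{e}\subset\mathcal{F}_\mathrm{e}$: given $u\in\mathcal{F}'_\mathrm{e}$, take an $\mathcal{E}'$-Cauchy sequence $\{u_n\}\subset\mathcal{F}'$ approximating $u$; since $\mathcal{F}'\subset\mathcal{F}$ and $\mathcal{E}'=\mathcal{E}$ on $\mathcal{F}'$, the same sequence is $\mathcal{E}$-Cauchy in $\mathcal{F}$ and converges $m$-a.e.\ to $u$, so $u\in\mathcal{F}_\mathrm{e}$. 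For the converse, intersect with $L^2(E,m)$: if $\mathcal{F}'_\mathrm{e}\subset\mathcal{F}_\mathrm{e}$ then $\mathcal{F}'=\mathcal{F}'_\mathrm{e}\cap L^2(E,m)\subset\mathcal{F}_\mathrm{e}\cap L^2(E,m)=\mathcal{F}$. The ``in particular'' statement about proper inclusions then follows: $\mathcal{F}'\subsetneqq\mathcal{F}$ forces $\mathcal{F}'_\mathrm{e}\subsetneqq\mathcal{F}_\mathrm{e}$ since equality of the extended spaces would, upon intersecting with $L^2(E,m)$, give $\mathcal{F}'=\mathcal{F}$; the reverse direction is immediate from $\mathcal{F}'\subset\mathcal{F}$ (part (1)) together with the fact that $\mathcal{F}'_\mathrm{e}\subsetneqq\mathcal{F}_\mathrm{e}$ cannot hold if $\mathcal{F}'=\mathcal{F}$.

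For (2): a measure $\mu$ lies in $S_E$ (resp.\ $S'_E$) precisely when $\mu$ is Radon, smooth, and its quasi support is all of $E$ with respect to $(\mathcal{E},\mathcal{F})$ (resp.\ $(\mathcal{E}',\mathcal{F}')$). By Remark \ref{RMCC}, $\mathrm{Cap}'\geq\mathrm{Cap}$, so every $\mathcal{E}'$-polar set is $\mathcal{E}$-polar and every $\mathcal{E}'$-nest is an $\mathcal{E}$-nest; hence smoothness with respect to $\mathcal{E}$ implies smoothness with respect to $\mathcal{E}'$ (the defining nest for $\mathcal{E}$ serves for $\mathcal{E}'$ as well), and $\mu$ charges no $\mathcal{E}'$-polar set since it charges no $\mathcal{E}$-polar set. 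It remains to check the quasi support is full with respect to $\mathcal{E}'$: if the $\mathcal{E}'$-quasi support $F$ of $\mu$ were a proper closed subset, then $E\setminus F$ would be a non-$\mu$-null... actually $\mu(E\setminus F)=0$ and $E\setminus F$ is nonempty open, but the quasi support being $E$ with respect to the coarser capacity $\mathrm{Cap}$ already tells us $\mu$ charges every nonempty open set (more precisely, no nonempty open set is $\mathrm{Cap}$-negligible relative to $\mu$); since nonempty open sets have strictly positive $\mathrm{Cap}$, hence strictly positive $\mathrm{Cap}'$, the support cannot shrink, and I would conclude $F=E$. Thus $\mu\in S'_E$.

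For (3): this is the classical invariance of irreducibility under time change with full quasi support. I would invoke that $(\mathcal{E},\mathcal{F})$ is irreducible if and only if $\mathcal{F}$ contains no nontrivial idempotent multiplier $1_A$ with $A$ $(T_t)$-invariant, and translate this through the characterization (as in Proposition 2.1.6 of \cite{CF}, already cited in the proof of Lemma \ref{LEMMA1}) that $A$ is invariant iff $1_A\cdot u\in\mathcal{F}_\mathrm{e}$ and $\mathcal{E}(u,u)=\mathcal{E}(1_A u,1_A u)+\mathcal{E}(1_{E\setminus A}u,1_{E\setminus A}u)$ for all $u\in\mathcal{F}_\mathrm{e}$ — a condition phrased purely in terms of the extended space $\mathcal{F}_\mathrm{e}$ and the form $\mathcal{E}$. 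Since the time-changed form $\mu(\mathcal{E},\mathcal{F})$ has extended space $\check{\mathcal{F}}_\mathrm{e}=\mathcal{F}_\mathrm{e}$ and $\check{\mathcal{E}}=\mathcal{E}$ on it (equation (\ref{12})), the invariant sets for the two forms coincide; and because $\mu$ and $m$ have the same family of null sets off $\mathcal{E}$-polar sets being irrelevant — more carefully, $\mu\in S_E$ has full quasi support so $\mu(A)=0\Leftrightarrow A$ is $\mathcal{E}$-polar for $A$ quasi-closed, matching the role of $m$ — an invariant set is $m$-trivial iff it is $\mu$-trivial. Hence one form is irreducible iff the other is.

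The main obstacle I anticipate is the delicate point in (2) and (3) concerning quasi supports: one must be careful that ``full quasi support with respect to the smaller form $\mathcal{E}'$'' genuinely follows from fullness with respect to $\mathcal{E}$, because a priori the finer capacity $\mathrm{Cap}'$ could have more polar sets and one needs that passing from $\mathrm{Cap}$ to $\mathrm{Cap}'$ cannot create new negligible pieces of the support — this works because $\mathrm{Cap}'\geq\mathrm{Cap}$ makes $\mathrm{Cap}'$-polar sets a \emph{sub}family of $\mathrm{Cap}$-polar sets, i.e.\ the support can only grow, never shrink. Once that monotonicity is correctly applied, the rest is bookkeeping with extended Dirichlet spaces.
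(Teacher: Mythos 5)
Parts (1) and (3) of your proposal are correct and follow essentially the same route as the paper (intersecting with $L^2(E,m)$ for (1); the extended-space characterization of invariant sets from Proposition 2.1.6 of \cite{CF} together with $\check{\mathcal{F}}_\mathrm{e}=\mathcal{F}_\mathrm{e}$ for (3)). The problem is in part (2), in the step where you verify that $\mu$ has full quasi support with respect to $(\mathcal{E}',\mathcal{F}')$. Your argument reduces to: $\mu$ charges every nonempty open set, nonempty open sets have positive $\text{Cap}'$, hence ``the support cannot shrink.'' This conflates the topological support with the quasi support. The $\mathcal{E}'$-quasi support $F$ is only quasi-closed, so $E\setminus F$ need not contain any open set, and full topological support of $\mu$ does not imply full quasi support (this is exactly the phenomenon the notion of quasi support is designed to capture). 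Moreover, the monotonicity $\text{Cap}'\geq\text{Cap}$ cuts the wrong way here: it says $\mathcal{E}'$ has \emph{fewer} polar sets and fewer nests than $\mathcal{E}$, so the assertion ``the quasi support equals $E$ up to an $\mathcal{E}'$-polar set'' is \emph{stronger} than the corresponding assertion for $\mathcal{E}$, and cannot be inherited by a soft monotonicity argument. (A related slip: you claim an $\mathcal{E}$-nest serves as an $\mathcal{E}'$-nest, but the inclusion of nests also goes the other way; smoothness of $\mu$ for $\mathcal{E}'$ should instead be deduced from the facts that $\mu$ is Radon and charges no $\mathcal{E}'$-polar set, since $\mathcal{E}'$-polar implies $\mathcal{E}$-polar.)

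The paper closes this gap with the functional characterization of full quasi support (Theorem 3.3.5 of \cite{CF}): $\mu\in S'_E$ iff for every $\mathcal{E}'$-quasi-continuous $v\in\mathcal{F}'$ one has $v=0$ $\mu$-a.e.\ $\Leftrightarrow$ $v=0$ $\mathcal{E}'$-q.e. The nontrivial direction is proved by the chain: $v\in\mathcal{F}'\subset\mathcal{F}$ and every $\mathcal{E}'$-quasi-continuous function is $\mathcal{E}$-quasi-continuous (Remark \ref{RMCC}), so $v=0$ $\mu$-a.e.\ gives $v=0$ $\mathcal{E}$-q.e.\ by $\mu\in S_E$, hence $v=0$ $m$-a.e.; then, because $v$ is $\mathcal{E}'$-quasi-continuous and vanishes $m$-a.e., it vanishes $\mathcal{E}'$-q.e. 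The converse direction uses that $\mathcal{E}'$-polar sets are $\mathcal{E}$-polar and hence $\mu$-null. You should replace your topological-support argument with this chain; without it, part (2) is not established.
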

\begin{proof}
    (1) The second assertion follows directly from the first one. For the first one, the if part is because $\mathcal{F}=\mathcal{F}_{\mathrm{e}}\cap L^2(E,m)$ (resp. $\mathcal{F}'=\mathcal{F}'_\mathrm{e}\cap L^2(E,m)$). The only if part is obvious by the definition of extended space.

    (2) {By Remark \ref{RMCC}, any $\text{Cap}'$-zero-capacity set is also of Cap-zero-capacity. As  $\mu$ is Radon and smooth with respect to $(\mathcal{E,F})$, $\mu$ is also smooth with respect to $(\mathcal{E}',\mathcal{F}')$.} It suffices to prove that  the quasi support of $\mu$ with respect to $(\mathcal{E}',\mathcal{F}')$ is $E$. As $\mu \in S_E$, it holds that $u=0$ $\mu$-a.e. if and only if $u=0$ $\mathcal{E}$-q.e.  for any $\mathcal{E}$-quasi continuous function $u\in \mathcal{F}$ by \text{Theorem 3.3.5} of  \cite{CF}. For any $\mathcal{E}'$-quasi continuous $v\in \mathcal{F}'$, it also holds that $v\in\mathcal{F}$ and $v$ is $\mathcal{E}$-quasi continuous. Therefore if $v=0$ $\mu$-a.e., then $u=0$ $\mathcal{E}$-q.e, and hence also $m$-a.e. But $v$ is $\mathcal{E}'$-quasi continuous, we have $v=0$ $\mathcal{E}'$-q.e. On the other hand,  it follows from $v=0$ $\mathcal{E}'$-q.e. and $\mu \in S_E$ that $v=0$ $\mu$-a.e. In other words, $v=0$ $\mu$-a.e. if and only if $v=0$ $\mathcal{E}'$-q.e for any $\mathcal{E}'$-quasi continuous function $v\in \mathcal{F}'$. Hence $\mu \in S'_E$ by using \text{Theorem 3.3.5} of  \cite{CF} again.

    (3) It is clear by using (\ref{12}), $\check{\mathcal{F}}_\mathrm{e}=\mathcal{F}_{\mathrm{e}}$ and \text{Proposition 2.1.6} of  \cite{CF}.
\end{proof}

\begin{remark}
It follows from \text{Lemma \ref{LEMMA2}(1)} that  the Dirichlet spaces $\mathcal{F}$ and $\mathcal{F}'$ appeared in \text{Definition \ref{D1}} can be replaced by their extended spaces $\mathcal{F}_{\mathrm{e}}$ and $\mathcal{F}'_\mathrm{e}$ respectively.
\end{remark}

\begin{proposition}\label{PROP3}
  If  $(\mathcal{E},\mathcal{F})$ is a \ regular Dirichlet form on $L^2(E,m)$ and $\mu \in S_E$, then $(\check{\mathcal{E}},\check{\mathcal{F}})=\mu(\mathcal{E},\mathcal{F})$ has the same structure of regular subspaces as $(\mathcal{E},\mathcal{F})$ with respect to $\mu$ in the meaning that for any $(\check{\mathcal{E}}',\check{\mathcal{F}}')\prec (\check{\mathcal{E}},\check{\mathcal{F}})$, there exists a unique  $(\mathcal{E}',\mathcal{F}')\prec (\mathcal{E},\mathcal{F})$ such that $(\check{\mathcal{E}}',\check{\mathcal{F}}')=\mu(\mathcal{E}',\mathcal{F}')$.
\end{proposition}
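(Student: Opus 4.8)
The plan is to establish a bijection between regular subspaces of $(\check{\mathcal{E}},\check{\mathcal{F}})$ and those of $(\mathcal{E},\mathcal{F})$ via the time-change operation $\mu(\cdot)$, using that the operation is reversible (the reverse being the time change by $m$) and that the extended Dirichlet spaces are preserved. First I would take an arbitrary $(\check{\mathcal{E}}',\check{\mathcal{F}}')\prec(\check{\mathcal{E}},\check{\mathcal{F}})$. Since $(\check{\mathcal{E}},\check{\mathcal{F}})$ is regular (as noted after \eqref{12}) and $(\check{\mathcal{E}}',\check{\mathcal{F}}')$ is by definition a regular Dirichlet form on $L^2(E,\mu)$, Lemma \ref{LEMMA2}(1) applied to the pair $(\check{\mathcal{E}}',\check{\mathcal{F}}')\prec(\check{\mathcal{E}},\check{\mathcal{F}})$ gives $\check{\mathcal{F}}'_\mathrm{e}\subset\check{\mathcal{F}}_\mathrm{e}=\mathcal{F}_\mathrm{e}$, while the defining identity \eqref{DEF1} together with \eqref{12} gives $\check{\mathcal{E}}'(u,u)=\check{\mathcal{E}}(u,u)=\mathcal{E}(u,u)$ for $u\in\check{\mathcal{F}}'$, hence for $u\in\check{\mathcal{F}}'_\mathrm{e}$ by taking extended-space limits.

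The candidate regular subspace of $(\mathcal{E},\mathcal{F})$ is then obtained by running the reverse time change $m(\cdot)$ on $(\check{\mathcal{E}}',\check{\mathcal{F}}')$. Concretely, set $\mathcal{F}':=\check{\mathcal{F}}'_\mathrm{e}\cap L^2(E,m)$ and $\mathcal{E}'(u,u):=\check{\mathcal{E}}'(u,u)$ for $u\in\mathcal{F}'$. To see this is legitimate I must check that $m$ is a Radon smooth measure of full quasi support with respect to $(\check{\mathcal{E}}',\check{\mathcal{F}}')$: smoothness and the Radon property transfer because by Remark \ref{RMCC} every $\check{\mathcal{E}}'$-polar set is $\check{\mathcal{E}}$-polar, hence $\mathcal{E}$-polar, and $m\in\check{S}_E$; full quasi support follows from the same argument as in Lemma \ref{LEMMA2}(2), with the roles of $(\mathcal{E},\mathcal{F})$ and $(\mathcal{E}',\mathcal{F}')$ there played by $(\check{\mathcal{E}},\check{\mathcal{F}})$ and $(\check{\mathcal{E}}',\check{\mathcal{F}}')$, using that $m\in\check{S}_E$ already. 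Then $(\mathcal{E}',\mathcal{F}')=m(\check{\mathcal{E}}',\check{\mathcal{F}}')$ is a regular Dirichlet form on $L^2(E,m)$ with $\mathcal{F}'_\mathrm{e}=\check{\mathcal{F}}'_\mathrm{e}$. Since $\mathcal{F}'_\mathrm{e}=\check{\mathcal{F}}'_\mathrm{e}\subset\mathcal{F}_\mathrm{e}$ and $\mathcal{E}'=\check{\mathcal{E}}'=\mathcal{E}$ on $\mathcal{F}'_\mathrm{e}$, Lemma \ref{LEMMA2}(1) (and the Remark following it) yields $(\mathcal{E}',\mathcal{F}')\prec(\mathcal{E},\mathcal{F})$. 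Applying $\mu(\cdot)$ to $(\mathcal{E}',\mathcal{F}')$ recovers $(\check{\mathcal{E}}',\check{\mathcal{F}}')$ because $\mu(m(\cdot))=\mathrm{id}$ on the level of extended spaces and energies, giving existence.

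For uniqueness, suppose $(\mathcal{E}'_1,\mathcal{F}'_1)$ and $(\mathcal{E}'_2,\mathcal{F}'_2)$ are both regular subspaces of $(\mathcal{E},\mathcal{F})$ with $\mu(\mathcal{E}'_i,\mathcal{F}'_i)=(\check{\mathcal{E}}',\check{\mathcal{F}}')$. By Lemma \ref{LEMMA2}(2), $\mu\in S'_{E,i}$ for $i=1,2$, so the time change $\mu(\cdot)$ is defined on each, and by \eqref{12} the extended space of $\mu(\mathcal{E}'_i,\mathcal{F}'_i)$ equals $(\mathcal{F}'_i)_\mathrm{e}$; hence $(\mathcal{F}'_1)_\mathrm{e}=\check{\mathcal{F}}'_\mathrm{e}=(\mathcal{F}'_2)_\mathrm{e}$, and intersecting with $L^2(E,m)$ gives $\mathcal{F}'_1=\mathcal{F}'_2$, while the energies agree since both equal $\mathcal{E}$ restricted there. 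The main obstacle I anticipate is the bookkeeping around quasi-notions: one must verify carefully that the quasi support condition for $m$ is met relative to the \emph{subspace} $(\check{\mathcal{E}}',\check{\mathcal{F}}')$ — not just relative to $(\check{\mathcal{E}},\check{\mathcal{F}})$ — and that the chain ``$\check{\mathcal{E}}'$-q.e. $\Rightarrow$ $\check{\mathcal{E}}$-q.e. $\Rightarrow$ $m$-a.e.'' together with its partial converse goes through exactly as in the proof of Lemma \ref{LEMMA2}(2); everything else is a formal consequence of the reversibility of time change and the invariance of extended Dirichlet spaces.
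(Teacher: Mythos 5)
Your proposal is correct and follows essentially the same route as the paper: define the candidate as $m(\check{\mathcal{E}}',\check{\mathcal{F}}')$, verify $m\in\check{S}'_E$ via Lemma \ref{LEMMA2}(2), use the invariance of extended Dirichlet spaces under time change together with Lemma \ref{LEMMA2}(1) to get $(\mathcal{E}',\mathcal{F}')\prec(\mathcal{E},\mathcal{F})$, and derive uniqueness from the reversibility $m(\mu(\cdot))=\mathrm{id}$. Your treatment of the quasi-support bookkeeping and of uniqueness is somewhat more explicit than the paper's, but the argument is the same.
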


\begin{proof}
Clearly $m\in \check{S}_E$, hence it follows from \text{Lemma \ref{LEMMA2}(2)} that $m\in \check{S}'_E$. Set  $(\mathcal{E}',\mathcal{F}'):=m(\check{\mathcal{E}}',\check{\mathcal{F}}')$. As
\[
       \mathcal{F}_{\mathrm{e}}= \check{\mathcal{F}}_\mathrm{e},\quad \mathcal{F}'_\mathrm{e}=\check{\mathcal{F}}'_\mathrm{e},
    \]
it follows from \text{Lemma \ref{LEMMA2}(1)} that  $(\mathcal{E}',\mathcal{F}')\prec(\mathcal{E},\mathcal{F})$. By the similar method, it is easy to check that if another $(\mathcal{E}'',\mathcal{F}'')\prec (\mathcal{E},\mathcal{F})$ such that $(\check{\mathcal{E}}',\check{\mathcal{F}}')=\mu(\mathcal{E}'',\mathcal{F}'')$, it must hold that $(\mathcal{E}'',\mathcal{F}'')=m(\check{\mathcal{E}}',\check{\mathcal{F}}')=(\mathcal{E}',\mathcal{F}')$. The uniqueness is proved.
\end{proof}

Combining with \text{Lemma \ref{LEMMA2}(3)}, we have

\begin{corollary}\label{COR4}
    Under the conditions of \text{Proposition \ref{PROP3}}, if $(\check{\mathcal{E}}',\check{\mathcal{F}}')$ is irreducible, then the corresponding $(\mathcal{E}',\mathcal{F}')$ is also irreducible.
\end{corollary}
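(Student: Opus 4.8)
The plan is to combine Proposition \ref{PROP3} with Lemma \ref{LEMMA2}(3), treating both as given. By Proposition \ref{PROP3}, the regular subspace $(\check{\mathcal{E}}',\check{\mathcal{F}}')\prec(\check{\mathcal{E}},\check{\mathcal{F}})$ is obtained from a unique $(\mathcal{E}',\mathcal{F}')\prec(\mathcal{E},\mathcal{F})$ by the time change along $\mu$, i.e. $(\check{\mathcal{E}}',\check{\mathcal{F}}')=\mu(\mathcal{E}',\mathcal{F}')$; here one uses that $m\in\check S_E$ and (via Lemma \ref{LEMMA2}(2)) $m\in\check S'_E$, so that the procedure is reversible and $(\mathcal{E}',\mathcal{F}')=m(\check{\mathcal{E}}',\check{\mathcal{F}}')$. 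So the only thing to verify is that irreducibility is preserved under this time change.

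First I would observe that $(\mathcal{E}',\mathcal{F}')$ is a regular Dirichlet form on $L^2(E,m)$ (it is a regular subspace of the regular form $(\mathcal{E},\mathcal{F})$) and that $\mu\in S'_E$ by Lemma \ref{LEMMA2}(2), so Lemma \ref{LEMMA2}(3) applies verbatim with $(\mathcal{E}',\mathcal{F}')$ in place of $(\mathcal{E},\mathcal{F})$: $(\mathcal{E}',\mathcal{F}')$ is irreducible if and only if $\mu(\mathcal{E}',\mathcal{F}')$ is irreducible. Since $\mu(\mathcal{E}',\mathcal{F}')=(\check{\mathcal{E}}',\check{\mathcal{F}}')$ is assumed irreducible, we conclude that $(\mathcal{E}',\mathcal{F}')$ is irreducible, which is exactly the claim.

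There is essentially no obstacle here; the corollary is a bookkeeping consequence of the two results already established. The only point requiring a word of care is making sure the hypotheses of Lemma \ref{LEMMA2}(3) are genuinely met for the \emph{subspace} form rather than the ambient one — that is, that $(\mathcal{E}',\mathcal{F}')$ is regular on $L^2(E,m)$ and that $\mu$ lies in $S'_E$ (its quasi support with respect to $(\mathcal{E}',\mathcal{F}')$ is still all of $E$). Both are already recorded: regularity is part of the definition of a regular subspace, and $\mu\in S'_E$ is Lemma \ref{LEMMA2}(2). Hence the proof reduces to citing Proposition \ref{PROP3} for existence/uniqueness of $(\mathcal{E}',\mathcal{F}')$ and Lemma \ref{LEMMA2}(3) for the transfer of irreducibility.
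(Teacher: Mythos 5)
Your proposal is correct and follows exactly the paper's intended argument: the paper states the corollary as an immediate consequence of combining Proposition \ref{PROP3} (which supplies the unique $(\mathcal{E}',\mathcal{F}')$ with $(\check{\mathcal{E}}',\check{\mathcal{F}}')=\mu(\mathcal{E}',\mathcal{F}')$) with Lemma \ref{LEMMA2}(3) applied to $(\mathcal{E}',\mathcal{F}')$, whose hypotheses hold since $(\mathcal{E}',\mathcal{F}')$ is regular and $\mu\in S'_E$ by Lemma \ref{LEMMA2}(2). Your added care in checking that the hypotheses of Lemma \ref{LEMMA2}(3) are met for the subspace form is exactly the right point to verify.
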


\subsubsection{Killing and resurrected transformations}\label{KRT}

Let $(\mathcal{E},\mathcal{F})$ be a regular Dirichlet form on $L^2(E,m)$ with no killing inside (or zero killing measure) and $X$ its corresponding Hunt process. Assume that the functions in $\mathcal{F}$ always take their quasi-continuous versions. Take a positive Radon measure $k$ on $E$ charging no $\mathcal{E}$-zero-capacity sets and the perturbed (killing) Dirichlet form $(\mathcal{E}^k,\mathcal{F}^k)$ of $(\mathcal{E},\mathcal{F})$ with respect to $k$ on $L^2(E,m)$ is defined by
\begin{equation}\label{Perturbed}
        \begin{split}
                \mathcal{F}^k&:=\mathcal{F}\cap L^2(E,k), \\
                    \mathcal{E}^k(u,v)&:=\mathcal{E}(u,v)+(u,v)_k,\quad  {u,v\in \mathcal{F}^k}.
              \end{split}
\end{equation}
By \text{Theorem 6.1.2} of   \cite{FU}, $(\mathcal{E}^k,\mathcal{F}^k)$  is a regular Dirichlet form whose corresponding symmetric Markov process is the canonical subprocess $X^k$  of $X$  relative to the multiplicative functional $(e^{-A_t})_{t\geq 0}$   where $A_t$  is the corresponding PCAF of smooth measure $k$. In particular, the killing measure of $(\mathcal{E}^k,\mathcal{F}^k)$ is just the Radon measure $k$. {Note that the perturbed Dirichlet form has the equivalent $\mathcal{E}$-quasi notions as the original one by Theorem 5.1.4 of  \cite{CF}, i.e. $\mathcal{E}$-polar (resp. $\mathcal{E}$-nest, $\mathcal{E}$-quasi-continuous) and $\mathcal{E}^k$-polar (resp. $\mathcal{E}^k$-nest, $\mathcal{E}^k$-quasi-continuous) are equivalent. Moreover, any special standard core of $(\mathcal{E},\mathcal{F})$ remains to be a special standard core of $(\mathcal{E}^k,\mathcal{F}^k)$.}

The opposite procedure is called ``resurrection". Let $(\mathcal{E},\mathcal{F})$ be a regular Dirichlet form  on $L^2(E,m)$ with killing measure $k\neq 0$.   For any $u,v\in \mathcal{F}_{\mathrm{e}}$, define
\begin{equation}\label{14}
    \mathcal{E}^{\mathrm{res}}(u,v):=\mathcal{E}^{(\mathrm{c})}(u,v)+\frac{1}{2}{\int}_{E\times E\setminus d}(u(x)-u(y))(v(x)-v(y))J(dxdy)
\end{equation}
and $m_k:= m+k$ where $(\mathcal{E}^{(\mathrm{c})},J,k)$ is the Beurling-Deny triple of $(\mathcal{E},\mathcal{F})$. Then it follows from
    \[\begin{split}
       \mathcal{F}&=\mathcal{F}_{\mathrm{e}}\cap L^2(E,m_k), \\  \mathcal{E}_1(u,v)&=\mathcal{E}^{\mathrm{res}}(u,v)+(u,v)_{m_k},\quad { u,v\in \mathcal{F}},
    \end{split}\]
that $(\mathcal{E}^{\mathrm{res}},\mathcal{F})$ is a regular Dirichlet form on $L^2(E,m_k)$.  Let $(\mathcal{E}^{\mathrm{res}},\mathcal{F}^{\mathrm{res}}_\mathrm{e})$ be the extended Dirichlet space of $(\mathcal{E}^{\mathrm{res}},\mathcal{F})$. Define
\begin{equation}\label{15}
    \mathcal{F}^{\mathrm{res}}=\mathcal{F}^{\mathrm{res}}_\mathrm{e}\cap L^2(E,m),
\end{equation}
then $(\mathcal{E}^{\mathrm{res}},\mathcal{F}^{\mathrm{res}})$ is the so-called resurrected Dirichlet form of $(\mathcal{E},\mathcal{F})$ which has no killing inside. It is regular on  $L^2(E,m)$ and shares the same {$\mathcal{E}$-quasi notions} with $(\mathcal{E},\mathcal{F})$. Moreover any special standard core of $(\mathcal{E},\mathcal{F})$ is a core of $(\mathcal{E}^{\mathrm{res}},\mathcal{F}^{\mathrm{res}})$. Note that the extended Dirichlet space of $(\mathcal{E}^{\mathrm{res}},\mathcal{F}^{\mathrm{res}})$ on $L^2(E,m)$ is just $(\mathcal{E}^{\mathrm{res}},\mathcal{F}^{\mathrm{res}}_\mathrm{e})$. For more details, see  \citen{FU, CF}.

Killing and resurrected transformations are reciprocal in the following sense: for a regular Dirichlet form with no killing inside, kill it firstly by a Radon smooth measure and then resurrect. The final Dirichlet form is the same as the original one. On the contrary, for a regular Dirichlet form with killing measure $k\neq 0$, resurrect it firstly and then kill it by $k$. The final Dirichlet form is also the same as the original one.

We are now ready to prove that the killing and resurrected transformations are both equivalent relationships between Dirichlet forms, in the meaning that they have the same structures holding regular subspaces between the transformed Dirichlet form  by killing or resurrection and  the original Dirichlet form. The following lemma is trivial and we omit its proof.

\begin{lemma}\label{3}
    If  $(\mathcal{E}',\mathcal{F}')\prec(\mathcal{E},\mathcal{F})$ and $k$ is a Radon smooth measure with respect to $(\mathcal{E},\mathcal{F})$, then $k$ is also a Radon smooth measure with respect to $(\mathcal{E}',\mathcal{F}')$.
\end{lemma}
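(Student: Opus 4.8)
The plan is to unwind the definition of a Radon smooth measure and verify that each of its constituent clauses transfers from $(\mathcal{E},\mathcal{F})$ to $(\mathcal{E}',\mathcal{F}')$. Recall that $k$ being a Radon smooth measure for a regular Dirichlet form means: (i) $k$ is Radon, i.e. $k(K)<\infty$ for every compact $K\subset E$; (ii) $k$ charges no set of zero $1$-capacity; and (iii) there is an increasing sequence of closed sets $\{F_n\}$ which is a nest and satisfies $k(F_n)<\infty$ for all $n$. Clause (i) refers only to the measure $k$ and the topology of $E$, so it is automatic for $(\mathcal{E}',\mathcal{F}')$.

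For (ii), I would invoke Remark \ref{RMCC}: since $\text{Cap}'(A)\geq \text{Cap}(A)$ for every $A\subset E$, any set of zero $\text{Cap}'$-capacity has zero $\text{Cap}$-capacity (equivalently, every $\mathcal{E}'$-polar set is $\mathcal{E}$-polar). As $k$ charges no set of zero $\text{Cap}$-capacity by hypothesis, it charges no $\mathcal{E}'$-polar set either.

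For (iii), I would pick an increasing sequence of compact sets $\{K_n\}$ with $K_n\subset \text{int}(K_{n+1})$ and $\bigcup_n K_n = E$, which exists because $E$ is a locally compact separable metric space. Each $K_n$ is closed and $k(K_n)<\infty$ since $k$ is Radon. Moreover $\{K_n\}$ is an $\mathcal{E}'$-nest: for any $u\in \mathcal{F}'\cap C_\mathrm{c}(E)$ one has $\text{supp}[u]\subset K_n$ for $n$ large, hence $u\in \mathcal{F}'_{K_n}$, and since $(\mathcal{E}',\mathcal{F}')$ is regular, $\mathcal{F}'\cap C_\mathrm{c}(E)$ — and therefore $\bigcup_n \mathcal{F}'_{K_n}$ — is ${\mathcal{E}'}_1^{\frac12}$-dense in $\mathcal{F}'$. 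Combining (i)--(iii) gives that $k$ is a Radon smooth measure with respect to $(\mathcal{E}',\mathcal{F}')$.

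There is essentially no serious obstacle; the one point worth making explicit is that the capacity comparison of Remark \ref{RMCC} is exactly the ingredient needed to push the ``charges no polar set'' clause from the larger form down to the subspace, while the remaining clauses are either intrinsic to the measure (Radon-ness) or follow from the regularity of $(\mathcal{E}',\mathcal{F}')$ together with the $\sigma$-compactness of $E$. This is why the lemma may be recorded without a detailed proof.
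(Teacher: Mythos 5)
Your proof is correct and is precisely the argument the paper has in mind when it declares the lemma ``trivial'' and omits the proof: Radon-ness is intrinsic, the ``charges no polar set'' clause transfers via the capacity comparison $\mathrm{Cap}'\geq\mathrm{Cap}$ of Remark~\ref{RMCC}, and the nest clause follows from $\sigma$-compactness of $E$ together with the regularity of $(\mathcal{E}',\mathcal{F}')$. No gaps.
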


\begin{lemma}\label{LEMMA4}
The following assertions about the irreducibility hold.
    \begin{enumerate}
    	\item Let $(\mathcal{E},\mathcal{F})$ be a regular Dirichlet form on $L^2(E,m)$ with no killing inside and $k$ a Radon smooth measure on $E$. {Then the perturbed Dirichlet form $(\mathcal{E}^k,\mathcal{F}^k)$ is irreducible if and only if $(\mathcal{E},\mathcal{F})$ is irreducible.}
     	 \item Let $(\mathcal{E},\mathcal{F})$ be a regular Dirichlet form on $L^2(E,m)$ with killing measure $k\neq 0$. {Then $(\mathcal{E},\mathcal{F})$ is irreducible if and only if $(\mathcal{E}^{\mathrm{res}},\mathcal{F}^{\mathrm{res}})$ is irreducible}.
    \end{enumerate}
\end{lemma}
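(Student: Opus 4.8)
The plan is to establish (1) first and then read (2) off it. For (2), recall from the reciprocity of killing and resurrection recorded above that, if $(\mathcal{E},\mathcal{F})$ has killing measure $k\neq 0$, its resurrection $(\mathcal{E}^{\mathrm{res}},\mathcal{F}^{\mathrm{res}})$ is a regular Dirichlet form on $L^2(E,m)$ with no killing inside sharing the $\mathcal{E}$-quasi notions with $(\mathcal{E},\mathcal{F})$; in particular $k$ is a Radon smooth measure with respect to $(\mathcal{E}^{\mathrm{res}},\mathcal{F}^{\mathrm{res}})$ and $(\mathcal{E},\mathcal{F})=\bigl((\mathcal{E}^{\mathrm{res}})^{k},(\mathcal{F}^{\mathrm{res}})^{k}\bigr)$. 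Applying (1) with $(\mathcal{E}^{\mathrm{res}},\mathcal{F}^{\mathrm{res}})$ in place of $(\mathcal{E},\mathcal{F})$ then yields exactly the equivalence claimed in (2), so the whole lemma reduces to (1).

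For (1) I would show that the perturbation by $k$ leaves the family of $(T_t)$-invariant sets unchanged; the equivalence of irreducibility is then automatic because both forms live on the same space $L^2(E,m)$, so $m$-triviality of an invariant set means the same thing for both. I would use the characterization (Proposition 2.1.6 of \cite{CF}) that an $m$-measurable set $A$ is invariant for a Dirichlet form $(\mathcal{A},\mathcal{G})$ on $L^2(E,m)$ iff $1_A u\in\mathcal{G}$ for all $u\in\mathcal{G}$ and $\mathcal{A}(u,v)=\mathcal{A}(1_A u,1_A v)+\mathcal{A}(1_{E\setminus A}u,1_{E\setminus A}v)$ for all $u,v\in\mathcal{G}$. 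Multiplication by $1_A$ always preserves $L^2(E,k)$, hence preserves $\mathcal{F}^k=\mathcal{F}\cap L^2(E,k)$ as soon as it preserves $\mathcal{F}$, and one always has $(u,v)_k=(1_Au,1_Av)_k+(1_{E\setminus A}u,1_{E\setminus A}v)_k$. Subtracting these identical $k$-terms, the invariance criterion for $(\mathcal{E}^k,\mathcal{F}^k)$ becomes: $1_Au\in\mathcal{F}$ for every $u\in\mathcal{F}^k$ and $\mathcal{E}(u,v)=\mathcal{E}(1_Au,1_Av)+\mathcal{E}(1_{E\setminus A}u,1_{E\setminus A}v)$ for all $u,v\in\mathcal{F}^k$ --- i.e.\ the invariance criterion for $(\mathcal{E},\mathcal{F})$, but tested only against the subspace $\mathcal{F}^k$.

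It then remains to see that testing against $\mathcal{F}^k$ is as strong as testing against all of $\mathcal{F}$. One direction is trivial since $\mathcal{F}^k\subset\mathcal{F}$ and $1_A$ preserves $L^2(E,k)$. For the converse I would use that $k$ is Radon, so $\mathcal{F}\cap C_c(E)\subset\mathcal{F}^k$, whence $\mathcal{F}^k$ is $\mathcal{E}_1^{1/2}$-dense in $\mathcal{F}$ by regularity; moreover, the $\mathcal{E}$-decomposition on $\mathcal{F}^k$ together with the non-negativity of $\mathcal{E}$ shows that $u\mapsto 1_Au$ and $u\mapsto 1_{E\setminus A}u$ are $\mathcal{E}_1^{1/2}$-contractions on $\mathcal{F}^k$. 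So for $u\in\mathcal{F}$ and $u_n\in\mathcal{F}\cap C_c(E)$ with $u_n\to u$ in $\mathcal{E}_1^{1/2}$-norm, the sequences $1_Au_n$ and $1_{E\setminus A}u_n$ are $\mathcal{E}_1$-Cauchy, converge in $\mathcal{F}$, and their $L^2$-limits are $1_Au$ and $1_{E\setminus A}u$; hence $1_A\mathcal{F}\subset\mathcal{F}$, and passing to the limit (then polarizing) extends the decomposition identity to all of $\mathcal{F}$. Thus the $(T_t^k)$- and $(T_t)$-invariant sets coincide, and (1) follows. Alternatively one can argue directly on the processes: $X^k$ runs along the paths of $X$ up to a random time $\sigma$ with $P_x(\sigma>t\mid X)=e^{-A_t}$, where $A$ is the PCAF of $k$ and $A_t<\infty$ for $t<\zeta$; a path of $X$ leaving $A$ does so at a time $\tau<\zeta$ with $A_\tau<\infty$, so it survives up to $\tau$ for $X^k$ with positive probability $E_x[e^{-A_\tau};\tau<\zeta]$, which ties the two notions of invariance together.

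The step I expect to be the main obstacle is exactly this last one: since $\mathcal{F}^k$ may be a proper (though dense) subspace of $\mathcal{F}$, one cannot read off $1_A\mathcal{F}\subset\mathcal{F}$ directly from the invariance of $(\mathcal{E}^k,\mathcal{F}^k)$, and the crucial observation is that cancelling the $k$-terms leaves a genuine $\mathcal{E}_1^{1/2}$-contraction estimate on $\mathcal{F}^k$ that propagates to $\mathcal{F}$ by density. Everything else is bookkeeping with the perturbation/resurrection data already recorded above.
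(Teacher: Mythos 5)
Your argument is correct and follows essentially the same route as the paper: both directions of (1) are handled by showing that the $(T_t)$-invariant and $(T_t^k)$-invariant sets coincide via the characterization of invariance, cancelling the $k$-terms, and extending from the core $\mathcal{F}\cap C_\mathrm{c}(E)\subset\mathcal{F}^k$ to all of $\mathcal{F}$ by regularity and closedness, with (2) obtained by running (1) in the opposite direction through the killing/resurrection reciprocity. Your write-up in fact makes explicit the contraction-plus-density step that the paper only gestures at with ``by the regularity of $(\mathcal{E},\mathcal{F})$ we can deduce\dots''.
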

\begin{proof}
    (1) We need only to prove $\mathcal{E}$-invariant sets are equivalent to $\mathcal{E}^k$-invariant sets. In fact, let $A$ be an $\mathcal{E}$-invariant set, then $1_A\cdot u\in \mathcal{F}$ for any $u\in \mathcal{F}^k\subset \mathcal{F}$. Hence $1_A\cdot u\in \mathcal{F}^k$ because $1_A\cdot u\in L^2(E,k)$. By \text{Proposition 2.1.6} of   \cite{CF} and
    \[
        (u,u)_k=(1_A\cdot u,1_A\cdot u)_k+(1_{A^c}\cdot u,1_{A^c}\cdot u)_k,
    \]
    we have $A$ is an $\mathcal{E}^k$-invariant set. {On the other hand, assume $B$ is an $\mathcal{E}^k$-invariant set, then $u\in \mathcal{F}\cap C_c(E)$ implies $u\in \mathcal{F}^k$, hence $1_B\cdot u\in \mathcal{F}^k\subset \mathcal{F}$ and 
    \[
    	\mathcal{E}^k(u,u)=\mathcal{E}^k(1_B\cdot u, 1_B\cdot u)+\mathcal{E}^k(1_{B^c}\cdot u,1_{B^c}\cdot u).
    \]
    This shows that $u\in \mathcal{F}\cap C_c(E)$ implies $1_B\cdot u\in \mathcal{F}$ and 
    \[
    \mathcal{E}(u,u)=\mathcal{E}(1_B\cdot u, 1_B\cdot u)+\mathcal{E}(1_{B^c}\cdot u,1_{B^c}\cdot u)
    \]
    because of $(u,u)_k=(1_B\cdot u,1_B\cdot u)_k+(1_{B^c}\cdot u,1_{B^c}\cdot u)_k$. From this equation and the regularity of $(\mathcal{E,F})$, we can deduce that $u\in \mathcal{F}$ implies $1_B\cdot u\in \mathcal{F}$ and 
    \[
    \mathcal{E}(u,u)=\mathcal{E}(1_B\cdot u, 1_B\cdot u)+\mathcal{E}(1_{B^c}\cdot u,1_{B^c}\cdot u).
    \]}

    (2) They are just the former conclusions in an opposite direction.
\end{proof}

\begin{proposition}
The following conclusions about the structure of the regular subspaces hold.
    \begin{enumerate}
    \item Let $(\mathcal{E},\mathcal{F})$ be a regular Dirichlet form on $L^2(E,m)$ with no killing inside and $k$ a Radon smooth measure on $E$, then the perturbed Dirichlet form $(\mathcal{E}^k,\mathcal{F}^k)$ and $(\mathcal{E},\mathcal{F})$ are two Dirichlet forms with the same structure of regular subspaces with respect to $k$ in the meaning that, for any $(\mathcal{E}^{'k},\mathcal{F}^{'k})\prec (\mathcal{E}^k,\mathcal{F}^k)$, there exists a unique $(\mathcal{E}',\mathcal{F}')\prec (\mathcal{E},\mathcal{F})$ such that $(\mathcal{E}^{'k},\mathcal{F}^{'k})$ is the perturbed Dirichlet form of $(\mathcal{E},\mathcal{F})$ with respect to $k$.
      \item If $(\mathcal{E},\mathcal{F})$ is a regular Dirichlet form on $L^2(E,m)$ with killing measure $k\neq 0$, then $(\mathcal{E}^{\mathrm{res}},\mathcal{F}^\text{{res}})$ and $(\mathcal{E},\mathcal{F})$ are two Dirichlet forms with the same structure of regular subspaces with respect to the resurrection in the meaning that, for any $(\mathcal{E}^{\mathrm{res}'},\mathcal{F}^{\mathrm{res}'})\prec (\mathcal{E}^{\mathrm{res}},\mathcal{F}^{\mathrm{res}})$, there exists a unique $(\mathcal{E}',\mathcal{F}')\prec (\mathcal{E},\mathcal{F})$ such that $(\mathcal{E}^{\mathrm{res}'},\mathcal{F}^{\mathrm{res}'})$  is the resurrected Dirichlet form of $(\mathcal{E}',\mathcal{F}')$.
    \end{enumerate}
\end{proposition}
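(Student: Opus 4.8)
The plan is to treat both parts by one mechanism, combining the reciprocity of killing and resurrection recalled just above with the structural characterisation of regular subspaces in Theorem \ref{THM3}. In part (1), given $(\mathcal{E}^{'k},\mathcal{F}^{'k})\prec(\mathcal{E}^k,\mathcal{F}^k)$, I would take $(\mathcal{E}',\mathcal{F}')$ to be its resurrection; in part (2), given $(\mathcal{E}^{\mathrm{res}'},\mathcal{F}^{\mathrm{res}'})\prec(\mathcal{E}^{\mathrm{res}},\mathcal{F}^{\mathrm{res}})$, I would take $(\mathcal{E}',\mathcal{F}')$ to be its perturbation by $k$ in the sense of (\ref{Perturbed}). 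First I would check these constructions are legitimate. By Theorem \ref{THM3} the killing measure of the given subspace equals that of the ambient form, namely $k$ in part (1) (so the resurrection procedure applies; the case $k=0$ is trivial) and $0$ in part (2). Since $(\mathcal{E},\mathcal{F})$, $(\mathcal{E}^k,\mathcal{F}^k)$ and $(\mathcal{E}^{\mathrm{res}},\mathcal{F}^{\mathrm{res}})$ all share the same $\mathcal{E}$-quasi notions, $k$ is Radon smooth for the ambient form, hence by Lemma \ref{3} for the given subspace, so the perturbation in part (2) is well defined; and regularity of $(\mathcal{E}',\mathcal{F}')$ on $L^2(E,m)$ comes from Theorem 6.1.2 of \cite{FU} in the perturbation case and from the resurrection construction in the other.

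The core step is $(\mathcal{E}',\mathcal{F}')\prec(\mathcal{E},\mathcal{F})$, which I would derive from Theorem \ref{THM3}. The key fact is that killing by $k$ and resurrection each alter only the killing measure in the Beurling--Deny triple, leaving the strongly local form $\mathcal{E}^{(\mathrm{c})}$ and the jumping measure $J$ literally unchanged ($J$ by (\ref{DJ1}), $\mathcal{E}^{(\mathrm{c})}$ by the definitions (\ref{Perturbed}), (\ref{14}), (\ref{15})), and therefore leaving the strongly local energy measures $\mu^c_{<u,v>}$ unchanged as well, since by (\ref{IEFD}) these are determined by $\mathcal{E}^{(\mathrm{c})}$ tested against $\mathcal{F}'\cap C_\mathrm{c}$, and $\mathcal{F}^{'k}\cap C_\mathrm{c}=\mathcal{F}'\cap C_\mathrm{c}$ because every bounded compactly supported function lies in $L^2(E,k)$. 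Hence the jumping measure of $(\mathcal{E}',\mathcal{F}')$ and its measures $\mu^c_{<u,v>}$ for $u,v\in\mathcal{F}'\cap C_\mathrm{c}$ coincide with those of the given subspace; by Theorem \ref{THM3} the latter coincide with those of the ambient form, which by the same preservation property coincide with those of $(\mathcal{E},\mathcal{F})$. The killing measure of $(\mathcal{E}',\mathcal{F}')$ is $0$ in part (1) and $k$ in part (2), which is exactly the killing measure of $(\mathcal{E},\mathcal{F})$. Since moreover $\mathcal{F}'\cap C_\mathrm{c}\subset\mathcal{F}\cap C_\mathrm{c}$, Theorem \ref{THM3} yields $(\mathcal{E}',\mathcal{F}')\prec(\mathcal{E},\mathcal{F})$.

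It then remains to identify the inverse transformation and prove uniqueness. By reciprocity, the perturbation of $(\mathcal{E}',\mathcal{F}')$ by $k$ is $(\mathcal{E}^{'k},\mathcal{F}^{'k})$ in part (1) (resurrect, then kill by $k$) and the resurrection of $(\mathcal{E}',\mathcal{F}')$ is $(\mathcal{E}^{\mathrm{res}'},\mathcal{F}^{\mathrm{res}'})$ in part (2) (kill by $k$, then resurrect); this is the existence claim. For uniqueness, suppose $(\mathcal{E}'',\mathcal{F}'')\prec(\mathcal{E},\mathcal{F})$ also has $(\mathcal{E}^{'k},\mathcal{F}^{'k})$ (resp. $(\mathcal{E}^{\mathrm{res}'},\mathcal{F}^{\mathrm{res}'})$) as its perturbation by $k$ (resp. its resurrection). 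By Theorem \ref{THM3} its killing measure equals that of $(\mathcal{E},\mathcal{F})$, so the opposite transformation applies to $(\mathcal{E}'',\mathcal{F}'')$; reciprocity then recovers $(\mathcal{E}'',\mathcal{F}'')$ from its image, which is the very form from which $(\mathcal{E}',\mathcal{F}')$ was built, whence $(\mathcal{E}'',\mathcal{F}'')=(\mathcal{E}',\mathcal{F}')$.

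The step I expect to be the main obstacle is the assertion that killing and resurrection leave $\mathcal{E}^{(\mathrm{c})}$, $J$ and the measures $\mu^c_{<u,v>}$ genuinely untouched: making this precise means unwinding the resurrection construction (\ref{14})--(\ref{15}), which passes through extended Dirichlet spaces, and applying (\ref{IEFD}) with the correct --- and, after the transformation, unchanged --- class of test functions. Routing the whole argument through Theorem \ref{THM3}, whose hypotheses involve only $\mathcal{F}'\cap C_\mathrm{c}$ and the Beurling--Deny triple, is what keeps the extended-space bookkeeping under control; the remaining points --- $\mathcal{F}^{'k}\cap C_\mathrm{c}=\mathcal{F}'\cap C_\mathrm{c}$, persistence of regularity, and smoothness of $k$ for the subspace --- are routine.
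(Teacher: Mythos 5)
Your proposal is correct, and it reaches the same endpoint by a genuinely different mechanism at the central step. The paper goes in the opposite direction first: it observes that killing a regular subspace $(\mathcal{E}',\mathcal{F}')\prec(\mathcal{E},\mathcal{F})$ immediately yields $(\mathcal{E}'^{k},\mathcal{F}'^{k})\prec(\mathcal{E}^{k},\mathcal{F}^{k})$ straight from the definition \eqref{Perturbed}, and then, for the converse, compares the resurrected forms $(\mathcal{E}^{k',\mathrm{res}},\mathcal{F}^{k'})\prec(\mathcal{E}^{k,\mathrm{res}},\mathcal{F}^{k})$ on $L^2(E,m_k)$ and deduces the domain inclusion $\mathcal{F}^{k',\mathrm{res}}\subset\mathcal{F}^{k,\mathrm{res}}=\mathcal{F}$ from the monotonicity of extended Dirichlet spaces (Lemma \ref{LEMMA2}(1)) together with \eqref{15}; uniqueness is then absorbed into the reciprocity of the two transformations, exactly as in your last paragraph. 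You instead verify the hypotheses of the ``if'' direction of Theorem \ref{THM3} for the candidate $(\mathcal{E}',\mathcal{F}')$, which forces you to identify the Beurling--Deny triple and the energy measures $\mu^c_{\langle u,v\rangle}$ of the perturbed and resurrected forms --- precisely the point you flag as the main obstacle. That identification does go through (on the common core the decomposition of $\mathcal{E}^k$, resp.\ $\mathcal{E}^{\mathrm{res}}$, is visibly $\mathcal{E}^{(\mathrm{c})}$ plus the same jump part plus/minus the killing term, and uniqueness of the Beurling--Deny decomposition together with \eqref{IEFD} does the rest), so there is no gap; but the paper's route buys you the domain inclusion for free from the extended-space lemma without ever touching $\mu^c$, whereas your route buys a uniform template that treats killing and resurrection symmetrically and makes the ``only the killing measure changes'' heuristic explicit. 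Your observation that $\mathcal{F}'^{k}\cap C_{\mathrm{c}}=\mathcal{F}'\cap C_{\mathrm{c}}$ because $k$ is Radon, and your use of Theorem \ref{THM3} to justify that the inverse transformation is applicable to any competing $(\mathcal{E}'',\mathcal{F}'')$ in the uniqueness step, are both correct and in fact slightly more explicit than what the paper records.
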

\begin{proof}
    (1) Take any $(\mathcal{E}',\mathcal{F}')\prec(\mathcal{E},\mathcal{F})$, then  $k$ is also  a  Radon smooth measure with respect to $(\mathcal{E}',\mathcal{F}')$ by \text{Lemma \ref{3}}. It follows from (\ref{Perturbed}) that
    \[
        (\mathcal{E}'^k,\mathcal{F}'^k)\prec(\mathcal{E}^k,\mathcal{F}^k).
    \]
We have known that $(\mathcal{E}^k,\mathcal{F}^k)$ is regular with killing measure $k$ and its resurrected Dirichlet form is just $(\mathcal{E},\mathcal{F})$. It suffices to prove that for any $(\mathcal{E}^{k'},\mathcal{F}^{k'})\prec(\mathcal{E}^k,\mathcal{F}^k)$, the resurrected Dirichlet form of $(\mathcal{E}^{k'},\mathcal{F}^{k'})$ is a regular subspace of $(\mathcal{E},\mathcal{F})$. In fact, $(\mathcal{E}^{k',\mathrm{res}},\mathcal{F}^{k'})\prec(\mathcal{E}^{k,\mathrm{res}},\mathcal{F}^k)$ on $L^2(E,m_k)$, where $\mathcal{E}^{k,\mathrm{res}}$ and $\mathcal{E}^{k',\mathrm{res}}$ are the resurrected forms of $\mathcal{E}^k$ and $\mathcal{E}^{k'}$ defined by (\ref{14}) respectively. By \text{Lemma \ref{LEMMA2}(1)} and (\ref{15})
    \[
        \mathcal{F}^{k',\mathrm{res}}_\mathrm{e}\subset \mathcal{F}^{k,\mathrm{res}}_\mathrm{e},\quad \mathcal{F}^{k',\mathrm{res}}\subset \mathcal{F}^{k,\mathrm{res}}=\mathcal{F}.
    \]
  Hence
    \[
        (\mathcal{E}^{k',\mathrm{res}},\mathcal{F}^{k',\mathrm{res}})\prec(\mathcal{E},\mathcal{F}).
    \]

    (2) We need only to change the order of the two steps in the first assertion.
\end{proof}

%Combining  with \text{Lemma \ref{LEMMA4}}, we have
%\begin{corollary}
   % \begin{description}
      %\item[(1)] If $(\mathcal{E},\mathcal{F})$ is a regular {strongly local} Dirichlet form on $L^2(E,m)$ and $k$ is a Radon smooth measure, then  the irreducible regular subspaces of $(\mathcal{E},\mathcal{F})$ and $(\mathcal{F}^k,\mathcal{E}^k)$ have a one-to-one correspondence.
      %\item[(2)] If If $(\mathcal{E},\mathcal{F})$ is a regular local Dirichlet form on $L^2(E,m)$ with killing measure $k\neq 0$, then the irreducible regular subspaces of $(\mathcal{E},\mathcal{F})$ and $(\mathcal{F}^{res},\mathcal{E}^{res})$ have a one-to-one correspondence.
    %\end{description}
%\end{corollary}

\subsubsection{Conclusions and examples}\label{REs}

From the above discussions, we could summarize that:

\begin{theorem}\label{SPT}
	Let $(\mathcal{E},\mathcal{F})$ be a regular Dirichlet form on $L^2(E,m)$ and $T$ a transformation described above, i.e. $T$ is one of the transformations including spatial homeomorphic transformation, time change with full quasi support, killing and resurrection. Define $(\tilde{\mathcal{E}}, \tilde{\mathcal{F}}):=T(\mathcal{E},\mathcal{F})$,  which is the transformed Dirichlet form of $(\mathcal{E},\mathcal{F})$ by $T$. Then $(\tilde{\mathcal{E}}, \tilde{\mathcal{F}})$ and $(\mathcal{E},\mathcal{F})$ have the same structure of regular subspaces with respect to the transformation $T$ in the meaning that for any $(\tilde{\mathcal{E}}', \tilde{\mathcal{F}}')\prec (\tilde{\mathcal{E}}, \tilde{\mathcal{F}})$, there exists a unique $(\mathcal{E}',\mathcal{F}')\prec  (\mathcal{E},\mathcal{F})$ such that $(\tilde{\mathcal{E}}', \tilde{\mathcal{F}}')=T(\mathcal{E}',\mathcal{F}')$. Conversely, it holds that $(\mathcal{E},\mathcal{F})=T^{-1}(\tilde{\mathcal{E}}, \tilde{\mathcal{F}})$ and they also have the same structure of regular subspaces with respect to the inverse transformation $T^{-1}$ in the similar sense.
\end{theorem}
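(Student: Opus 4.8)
The plan is to prove \text{Theorem \ref{SPT}} by reducing it, transformation by transformation, to the three Propositions already established in \text{\S\ref{IUST}} (one for the spatial homeomorphic transformation, one for the time change with full quasi support, and one — split into parts (1) and (2) — for killing and resurrection), and then to deduce the ``converse'' half for free by applying the direct statement to the inverse transformation. So there is essentially no new argument to be produced; the work is in recording why $T^{-1}$ is again an admissible transformation and in assembling the pieces.

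First I would record the structural fact that underlies the whole statement: each admissible transformation $T$ is invertible, its inverse $T^{-1}$ is again one of the four admissible transformations, and $(\mathcal{E},\mathcal{F})=T^{-1}(\tilde{\mathcal{E}},\tilde{\mathcal{F}})$. Indeed, if $T=j$ is the spatial homeomorphic transformation attached to a homeomorphism $j\colon E\to \hat{E}$, then $T^{-1}=j^{-1}$, and $(\mathcal{E},\mathcal{F})=j^{-1}(\hat{\mathcal{E}},\hat{\mathcal{F}})$ by \eqref{SHT}; if $T=\mu(\,\cdot\,)$ is the time change by $\mu\in S_E$, then $T^{-1}=m(\,\cdot\,)$, the time change by the measure $m$, which is Radon and smooth with full quasi support for the time-changed form, so that $(\mathcal{E},\mathcal{F})=m(\check{\mathcal{E}},\check{\mathcal{F}})$ as noted in \text{\S\ref{IUST}}; and if $T$ is the killing by a Radon smooth measure $k$ (resp. the resurrection), then $T^{-1}$ is the resurrection (resp. the killing by $k$), by the reciprocity of killing and resurrection recalled in \text{\S\ref{KRT}}.

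Next I would treat the direct assertion of the theorem by cases according to which transformation $T$ is. If $T=j$, the conclusion is precisely \text{Proposition \ref{PROP2}}. If $T=\mu(\,\cdot\,)$ with $\mu\in S_E$, it is \text{Proposition \ref{PROP3}}, whose proof also yields the required uniqueness by taking $(\mathcal{E}',\mathcal{F}'):=m(\check{\mathcal{E}}',\check{\mathcal{F}}')$ and invoking \text{Lemma \ref{LEMMA2}}. If $T$ is the killing by $k$, resp. the resurrection, it is part (1), resp. part (2), of the Proposition in \text{\S\ref{KRT}}. In each of these cases the ingredients that make the construction legitimate have already been verified: that $k$ remains Radon smooth for the candidate subspace (\text{Lemma \ref{3}}), that $\mu$ retains full quasi support for the candidate subspace (\text{Lemma \ref{LEMMA2}(2)}), and that the relevant quasi-notions transfer along inclusions (\text{Remark \ref{RMCC}} and the results cited in \text{\S\ref{IUST}}).

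Finally, for the converse I would apply the direct assertion just proved, but with $T$ replaced by $T^{-1}$: since $T^{-1}$ is again one of the four admissible transformations and $(\mathcal{E},\mathcal{F})=T^{-1}(\tilde{\mathcal{E}},\tilde{\mathcal{F}})$, every $(\mathcal{E}',\mathcal{F}')\prec(\mathcal{E},\mathcal{F})$ is of the form $T^{-1}(\tilde{\mathcal{E}}',\tilde{\mathcal{F}}')$ for a unique $(\tilde{\mathcal{E}}',\tilde{\mathcal{F}}')\prec(\tilde{\mathcal{E}},\tilde{\mathcal{F}})$, i.e. $(\tilde{\mathcal{E}}',\tilde{\mathcal{F}}')=T(\mathcal{E}',\mathcal{F}')$; together with the direct assertion this exhibits $T$ and $T^{-1}$ as mutually inverse bijections between the regular subspaces of $(\mathcal{E},\mathcal{F})$ and those of $(\tilde{\mathcal{E}},\tilde{\mathcal{F}})$. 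The one point that needs a word of care — and it is the closest thing to an obstacle here — is the compatibility $T^{-1}\bigl(T(\mathcal{E}',\mathcal{F}')\bigr)=(\mathcal{E}',\mathcal{F}')$ at the level of individual subspaces, which is what ties the uniqueness statements for $T$ and for $T^{-1}$ together; but this is immediate from the reciprocity of $T$ and $T^{-1}$ on all Dirichlet forms on the relevant $L^2$-spaces, so no genuinely new argument is needed beyond bookkeeping.
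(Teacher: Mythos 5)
Your proposal is correct and follows exactly the route the paper intends: the theorem is stated there as a summary ("From the above discussions, we could summarize that...") of Proposition \ref{PROP2}, Proposition \ref{PROP3}, and the killing/resurrection proposition in \S\ref{KRT}, with the converse obtained by applying the direct statement to the inverse transformation, whose admissibility and reciprocity with $T$ are recorded in \S\ref{IUST}. Your write-up simply makes explicit the case-by-case assembly and the bookkeeping that the paper leaves implicit.
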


In \text{Corollary \ref{COR1}(3)}, we have proved that if the Dirichlet form $(\mathcal{E},\mathcal{F})$ is pure-killing type, i.e. $\mathcal{E}(u,u)=(u,u)_k$ for some Radon measure $k$, then it has no proper regular subspaces. In addition,  for general Dirichlet forms,  the killing part does not play a role in generating a proper regular subspaces because it follows from \text{\S\ref{KRT}} that the resurrected Dirichlet form is equivalent to the original one up to the resurrection. In other words, adding or erasing the killing part of a Dirichlet form does not really change the structure of its regular subspaces. Hence it is not necessary to consider the killing part for the discussions about the regular subspaces of a Dirichlet form. We assume all the Dirichlet forms appeared in the sequel have no killing inside unless otherwise stated.

It is discussed in     \cite{FMG} that how to characterize the regular subspaces of one-dimensional (reflecting) Brownian motion. By \text{Theorem \ref{THM3}} every corresponding Markov process of its regular subspace must be a diffusion without killing inside. If the diffusion is irreducible, it can be characterized by a scale function $s$ which is uniquely determined up to a linear transformation.  The additional condition on $s$ is that $s'=0$ or $1$ a.e.  Using the above theorem we can give a simple characterization to the regular subspaces of one-dimensional diffusions.

\begin{example}\label{EXA1}
    Let $I\subset \mathbf{R}$ be an interval and $(\mathcal{E},\mathcal{F})$ an irreducible {strongly local} Dirichlet form on $L^2(I,m)$ whose corresponding Markov process is an irreducible diffusion $(X_t)_{t\geq 0}$. Write $I=\langle a,b\rangle$ with two endpoints, $-\infty\leq a<b\leq \infty$, which may or may not be in $I$ ($X$ is reflecting or absorbing at the endpoints respectively). Clearly $m$ is the speed measure of $X$ (unique up to a constant) and let $s$ be  its scale function  which is a strictly increasing and continuous function on $I$. Hence
    \[
        \mathcal{E}(u,u)=\frac{1}{2}\int_I(\frac{du}{ds})^2ds,\quad { u\in \mathcal{F}}.
    \]

    Set $J:=s(I)\subset \mathbf{R}$, then $s^{-1}:J\rightarrow I$ is a homeomorphism mapping. Let $(B_t)_{t\geq 0}$ be the Brownian motion on $J$ which is reflecting or absorbing at the finite endpoint of $J$ if $X$ is absorbing or reflecting at the corresponding endpoint of $I$. Then $(X_t)_{t\geq 0}$ is the time-changed Markov process of $(s^{-1}(B_t))_{t\geq 0}$ by $m$, i.e. a homeomorphism transformation $s^{-1}$ firstly and a time change by $m$ next on $(B_t)_{t\geq 0}$. Applying \text{Theorem \ref{SPT}}, we know that the associated Dirichlet forms of the one-dimensional irreducible diffusion $X$ on $I$ and the corresponding Brownian motion $B$ on $J$ have the same structure of regular subspaces with respect to the transformation $s^{-1}$ and the time change by Revuz measure $m$.
In particular, by the characterization of the regular subspaces of one-dimensional Brownian motion\footnote{See \text{Theorem \ref{THMBM}}.}, any irreducible regular subspace $(\mathcal{E}',\mathcal{F}')$ of $(\mathcal{E},\mathcal{F})$ can be characterized by the speed measure $m$ and another scale function $s'$ on $I$, i.e. a strictly increasing and continuous function on $I$, such that
    \[
        ds'\ll ds\text{ and } \frac{ds'}{ds}=0\text{ or } 1,\quad ds\text{-a.e.}
    \]
Note that this is the main results of     \cite{XPJ}.
\end{example}

More applications of \text{Theorem \ref{SPT}} will be found in the following text, especially in dealing with the planar reflecting or absorbing Brownian motions in \text{\S\ref{PRAB}}.

\section{Regular subspaces of local Dirichlet forms}\label{EXIS}
The authors  proved in     \cite{FMG} that one-dimensional Brownian motion always possesses proper regular subspaces which are characterized by a special scale function. We shall study the similar problems for the symmetric diffusions on multidimensional space. Before that let's reviews the results of one-dimensional Brownian motion.

\subsection{One-dimensional Brownian motion}\label{BMC}
It is known that $(\frac{1}{2}\mathbf{D},H^1(\mathbf{R}))$ is the corresponding Dirichlet form of one-dimensional Brownian motion $(B_t)_{t\geq0}$ where
\begin{equation}\label{ODBM}
 \begin{split}
        H^1(\mathbf{R}) &= \{u\in L^2(\mathbf{R}): u\text{ is absolutely continuous, }u'\in L^2(\mathbf{R}) \}, \\
        \mathbf{D}(u,v)&=\int u'(x)v'(x)dx.
        \end{split}
    \end{equation}
An irreducible diffusion on $\mathbf{R}$ associated with a local Dirichlet form can be characterized by a scale function $s$, a speed measure $m$ and a killing measure $k$ where $s$ is a strictly increasing and continuous function on $\mathbf{R}$ and $m,k$ are Radon measures on {$\mathbf{R}$}. We may and will assume $k=0$ because of the notes after \text{Theorem \ref{SPT}}. Define
\begin{equation}\label{SR}
    \begin{split}
    \mathbf{S}(\mathbf{R}):= \{s:\mathbf{R}\rightarrow \mathbf{R}\,|\, &s \text{ is strictly increasing and continuous,}\;s(0)=0\;\\
                                &\text{and} \;s'(x)=0\text{ or }1\text{ a.e. } \},
    \end{split}
\end{equation}
\begin{equation}
    \hat{\mathbf{S}}(\mathbf{R}):= \{s\in \mathbf{S}(\mathbf{R}):\;s(\pm \infty)=\pm \infty\}.
\end{equation}
For any $s\in \mathbf{S}(\mathbf{R})$, define
\begin{equation}\label{SBM}
 \begin{split}
        \mathcal{F}^{(s)}&:= \{u\in L^2(\mathbf{R}):\;u\ll s,\;\int_{\mathbf{R}}(\frac{du}{ds})^2ds<\infty\} \\
        \mathcal{E}^{(s)}(u,v)&:= \frac{1}{2}\int_{\mathbf{R}} \frac{du}{ds}\frac{dv}{ds}ds,\quad u,v\in \mathcal{F}^{(s)},
        \end{split}
    \end{equation}
   {where $u\ll s$ means $u$ is absolutely continuous with respect to $s$, in other words, there exists an absolutely continuous function $\phi$ such that $u=\phi\circ s$.}
Although $s^{-1}$ may not be absolutely continuous, $ds^{-1}$ is a smooth measure of $(B_t,P^x)$ because the only polar set of $(B_t)$ is null set. Let $(A_t)$ be the PCAF associated with  $ds^{-1}$ and $(\tau_t)$  the inverse of $(A_t)$. Then we have:

\begin{theorem}[ \cite{FMG}, 2005]\label{THMBM}
The following assertions about the regular subspaces of one-dimensional Brownian motion hold.
\begin{enumerate}
\item The Dirichlet form $(\mathcal{F}^{(s)},\mathcal{E}^{(s)})$ is {strongly local} and regular on $L^2(\mathbf{R})$ whose associated diffusion has the scale function $s$ up to a linear transform, speed measure $dx$. Hence the generator of $(\mathcal{F}^{(s)},\mathcal{E}^{(s)})$ is
 \[
 	A^{(s)}=\frac{1}{2}\frac{d}{dx}\frac{d}{ds}.
 	\]
  \item The class $C_\mathrm{c}^1(s):=\{{\varphi\circ s}:\varphi \in C_\mathrm{c}^1(J)\text{ where } J=s(\mathbf{R})\}$ is a special standard core of $(\mathcal{F}^{(s)},\mathcal{E}^{(s)})$.
  \item For any $s\in \mathbf{S}(\mathbf{R})$, $(\mathcal{F}^{(s)},\mathcal{E}^{(s)})\prec (\frac{1}{2}\mathbf{D},H^1(\mathbf{R}))$. Moreover $\mathcal{F}^{(s)}\subsetneqq H^1(\mathbf{R})$ if and only if $|E_s|>0$ where $E_s=\{x\in \mathbf{R}: s'(x)=0\}$ and $|E_s|$ means the Lebesgue measure of $E_s$.
  \item The regular subspace $(\mathcal{E}',\mathcal{F}')$ of $(\frac{1}{2}\mathbf{D},H^1(\mathbf{R}))$ is recurrent if and only if $(\mathcal{E}',\mathcal{F}')=(\mathcal{F}^{(s)},\mathcal{E}^{(s)})$ for some $s\in \hat{\mathbf{S}}(\mathbf{R})$. Moreover its associated diffusion is $(X^s_t,P^{s(x)})$ where $X^s_t=s^{-1}(B_{\tau_t})$.
\end{enumerate}
\end{theorem}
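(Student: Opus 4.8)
The plan is to place $(\mathcal F^{(s)},\mathcal E^{(s)})$ inside the transformation framework of \S\ref{IUST}. Writing $J:=s(\mathbf R)$ (an open interval) and viewing $s:\mathbf R\to J$ as an increasing homeomorphism, the change of variables $u=\phi\circ s$ turns the spatial transformation by $s^{-1}$ applied to the (reflecting/absorbing) Brownian form $(\tfrac12\mathbf D,H^1(J))$ into the form of energy $\tfrac12\int_{\mathbf R}(\tfrac{du}{ds})^2\,ds$ on $L^2(\mathbf R,ds)$; a subsequent time change by the Revuz measure $dx$ — Radon, smooth (the only polar set of one--dimensional Brownian motion is $\varnothing$) and of full quasi support — then produces exactly $(\mathcal E^{(s)},\mathcal F^{(s)})$, the time--changed domain being the extended space intersected with $L^2(\mathbf R,dx)$. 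Parts (1)--(2) follow: regularity, strong locality, and the identification under $s^{-1}$ of $C^1_\mathrm{c}(s)$ with $C^1_\mathrm{c}(J)$, a special standard core of $(\tfrac12\mathbf D,H^1(J))$, are transported along Lemma \ref{LEMMA1}, Proposition \ref{PROP3} and Theorem \ref{SPT} (boundary bookkeeping when $J$ has a finite endpoint is routine), and $A^{(s)}=\tfrac12\tfrac{d}{dx}\tfrac{d}{ds}$ comes from the Stieltjes integration by parts $\mathcal E^{(s)}(u,v)=-\tfrac12\int v\,d(\tfrac{du}{ds})$. All of this can equally be checked directly: closedness because an $\mathcal E^{(s)}_1$--Cauchy sequence forces $L^2$--convergence of $u_n$ together with $L^2(ds)$--convergence of $\tfrac{du_n}{ds}$; the Markovian property from $\tfrac{d(\phi(u))}{ds}=\phi'(u)\tfrac{du}{ds}$; regularity and the core property from density of $C^1_\mathrm{c}(J)$.

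For part (3) I would argue directly, using that $ds=s'\,dx$ with $s'=\mathbf 1_{E_s^\mathrm{c}}$ a.e.\ (absolute continuity of $s$ being part of the definition of $\mathbf S(\mathbf R)$). If $u=\phi\circ s\in\mathcal F^{(s)}$, the chain rule for the composition of absolutely continuous $\phi$ with absolutely continuous monotone $s$ gives that $u$ is absolutely continuous with $u'=(\phi'\circ s)\mathbf 1_{E_s^\mathrm{c}}$ a.e., hence $u\in H^1(\mathbf R)$ and $\mathcal E^{(s)}(u,u)=\tfrac12\int(\phi'\circ s)^2 s'\,dx=\tfrac12\int(u')^2\,dx$; both forms being regular, this yields $(\mathcal F^{(s)},\mathcal E^{(s)})\prec(\tfrac12\mathbf D,H^1(\mathbf R))$. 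The same formula shows $u'=0$ a.e.\ on $E_s$ for every $u\in\mathcal F^{(s)}$, so if $|E_s|>0$, picking a bounded $[a,b]$ with $|E_s\cap[a,b]|>0$ and a compactly supported $H^1$ function agreeing on $[a,b]$ with $x\mapsto\int_a^x\mathbf 1_{E_s\cap[a,b]}$ gives an element of $H^1(\mathbf R)\setminus\mathcal F^{(s)}$, whence $\mathcal F^{(s)}\subsetneqq H^1(\mathbf R)$; while $|E_s|=0$ forces $s'=1$ a.e.\ and $s(0)=0$, so $s=\mathrm{id}$ and $\mathcal F^{(s)}=H^1(\mathbf R)$.

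For part (4), ``$\Leftarrow$'' is the easier half: if $s\in\hat{\mathbf S}(\mathbf R)$ then $J=\mathbf R$, the diffusion of $(\mathcal E^{(s)},\mathcal F^{(s)})$ has scale function $s$ with $s(\pm\infty)=\pm\infty$, so neither boundary of $\mathbf R$ is attracting and the form is recurrent; unwinding the transformation (a genuine homeomorphism $s^{-1}:\mathbf R\to\mathbf R$ and the time change by $dx$, which pulls back to the PCAF of $ds^{-1}$ since $s_*(dx)=ds^{-1}$) identifies the diffusion as $X^s_t=s^{-1}(B_{\tau_t})$. For ``$\Rightarrow$'', let $(\mathcal E',\mathcal F')\prec(\tfrac12\mathbf D,H^1(\mathbf R))$ be recurrent. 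By Theorem \ref{THM3} and Corollary \ref{COR1} it is strongly local with vanishing jumping and killing measures, so its process is a symmetric diffusion on $\mathbf R$; being a diffusion on an interval it is irreducible, and being recurrent it is, by classical one--dimensional theory, characterized by a strictly increasing continuous scale function $s$ (normalized $s(0)=0$, the symmetrizing measure being $dx$), with $\mathcal E'(u,v)=\tfrac12\int\tfrac{du}{ds}\tfrac{dv}{ds}\,ds$ on $\{u\in L^2(dx):u\ll s,\ \int(\tfrac{du}{ds})^2\,ds<\infty\}$. Feeding a core function $u=\phi\circ s$, $\phi\in C^1_\mathrm{c}(J)$, into $\mathcal F'\subset H^1(\mathbf R)$ forces the measure $du=(\phi'\circ s)\,ds$ to be absolutely continuous in $x$; letting $\phi$ range over $C^1_\mathrm{c}(J)$ this kills the singular part of $ds$, so $s$ is absolutely continuous and $ds=s'\,dx$; comparing $\mathcal E'(u,u)=\tfrac12\int(\phi'\circ s)^2 s'\,dx$ with $\tfrac12\int(u')^2\,dx=\tfrac12\int(\phi'\circ s)^2(s')^2\,dx$ over the core then forces $s'\in\{0,1\}$ a.e., i.e.\ $s\in\mathbf S(\mathbf R)$; finally recurrence forces $s(\pm\infty)=\pm\infty$, so $s\in\hat{\mathbf S}(\mathbf R)$, and $(\mathcal E',\mathcal F')=(\mathcal F^{(s)},\mathcal E^{(s)})$ with diffusion $s^{-1}(B_{\tau_t})$.

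\textbf{The main obstacle} I expect is the ``$\Rightarrow$'' direction of part (4): reconciling the classical structure theory of one--dimensional diffusions (that the Dirichlet form of such a diffusion has precisely the stated domain) with the subspace relation, and in particular extracting the \emph{absolute continuity} of $s$ — a priori $s$ is only known to be strictly increasing and continuous — from the embedding $\mathcal F'\subset H^1(\mathbf R)$; once $ds=s'\,dx$ is available, the constraint $s'\in\{0,1\}$ and the boundary behavior $s(\pm\infty)=\pm\infty$ are short. Everything in parts (1)--(3) is either transcription through the transformation theory of \S\ref{IUST} or routine Stieltjes calculus.
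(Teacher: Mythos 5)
The paper does not actually prove this theorem: it is quoted from \cite{FMG}, and the only argument the authors supply is the short heuristic following the statement explaining why $s'\in\{0,1\}$ a.e.\ must hold. Your reconstruction is consistent with that heuristic and with the transformation philosophy of \S\ref{IUST} (Example \ref{EXA1} runs the same spatial-transform-plus-time-change argument in the reverse direction), so as a self-contained proof it is sound in outline; but two places need more than you give.

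First, in the ``$\Rightarrow$'' half of (4) you assert that the diffusion of a recurrent regular subspace, ``being a diffusion on an interval, is irreducible.'' That is not automatic for a strongly local regular Dirichlet form on $\mathbf{R}$, and the classical scale-function representation you then invoke requires it. It is true here, but for a reason specific to the situation: if $A$ is invariant then $1_A\cdot u\in\mathcal{F}'\subset H^1(\mathbf{R})$ for every $u\in\mathcal{F}'$, and since $H^1(\mathbf{R})$ functions have continuous versions and $\mathcal{F}'\cap C_\mathrm{c}$ is uniformly dense in $C_\mathrm{c}$, one can choose $u>0$ on any prescribed interval and conclude that $1_A$ agrees a.e.\ with a continuous $\{0,1\}$-valued function, hence $A$ is trivial. (Alternatively, follow \cite{FMG} and restrict (4) to irreducible subspaces.) Second, your derivation of $s'\in\{0,1\}$ by ``comparing \dots over the core'' amounts to the identity $\int(\phi'\circ s)^2\bigl(s'-(s')^2\bigr)dx=0$ for all core elements, whose integrand changes sign wherever $s'>1$, so you cannot localize naively from the integrated statement. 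The clean route is the one Theorem \ref{THM3} provides: the subspace relation forces equality of the strongly local energy measures, i.e.\ $(\phi'\circ s)^2\,ds=(\phi'\circ s)^2(s')^2\,dx$ as measures for every core element; varying $\phi$ gives $ds=(s')^2\,dx$, and combined with the absolute continuity $ds=s'\,dx$ that you correctly extract from $\mathcal{F}'\subset H^1(\mathbf{R})$, this yields $s'=(s')^2$ a.e. Everything else --- the construction of $(\mathcal{E}^{(s)},\mathcal{F}^{(s)})$ as a homeomorphic image of $(\tfrac12\mathbf{D},H^1(J))$ followed by a time change by $dx$, the chain-rule computation in (3), the properness criterion via $|E_s|$, and the recurrence test $s(\pm\infty)=\pm\infty$ --- is correct and is exactly how \cite{FMG} and Example \ref{EXA1} proceed.
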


Note that when $s^{-1}(x)=c(x)+x$, where $c(x)$ is the standard {Cantor function}, it holds that $|E_s|>0$. Moreover, if the Brownian motion lies only on an proper interval $I$ of $\mathbf{R}$, set $I=\langle a,b\rangle$ where two endpoints $a,b$ may or may not be in $I$, accordingly the Brownian motion is reflecting or absorbing at the endpoints.  Let $\mathbf{S}(I)$  be defined as the similar way as (\ref{SR}), replacing $\mathbf{R}$ by $I$ and $s(0)=0$ by $s(a)=0$ if $a>-\infty$.  And the domain of $\mathcal{E}^{(s)}$ should be restricted to a subspace of $\mathcal{F}^{(s)}$ satisfying that the value at the absorbing endpoint of every function in this subspace is zero. The results about the regular subspaces of the reflecting or absorbing Brownian motion on $I$  are similar as \text{Theorem \ref{THMBM}}, see    \cite{FMG}.

We are going to give a brief explanation of $\mathbf{S}(\mathbf{R})$ (or $\mathbf{S}(I)$). Let $(\mathcal{E}',\mathcal{F}')$ be a regular subspace of $(\frac{1}{2}\mathbf{D},H^1(\mathbf{R}))$ whose associated Markov process is denoted by $X$. By \text{Corollary \ref{COR1}}, $X$ is a diffusion on $\mathbf{R}$ and hence it  can be characterized by a strictly increasing and continuous scale function $s$ on $\mathbf{R}$ and a speed measure which is just the Lebesgue measure. Moreover it holds that
\[
  \mathcal{E}'(u,u)= \frac{1}{2}\int (\frac{du}{ds})^2ds,\quad { u\in \mathcal{F}'.}
\]
However $\mathcal{E}'$ is equal to $\frac{1}{2}\mathbf{D}$ on $\mathcal{F}'$ and
\[
\begin{split}
    \frac{1}{2}\mathbf{D}(u,u)=\frac{1}{2}\int (u'(x))^2dx =\frac{1}{2}\int (\frac{du}{ds}\cdot \frac{ds}{dx})^2dx =\frac{1}{2}\int (\frac{du}{ds})^2(s'(x))^2dx,
\end{split}\]
therefore we must assume that $s'(x)^2=s'(x)$ a.e., in other words, $s'(x)=0 \text{ or }1$ a.e, to ensure that
\[
	\mathcal{E}'(u,u)=\frac{1}{2}\mathbf{D}(u,u), \quad \forall u\in \mathcal{F}'.
\]
This is the essential reason for the definition of $\mathbf{S}(\mathbf{R})$.

\subsection{Multidimensional Brownian motions}\label{DBM}

In this section, we will study the regular subspaces of multidimensional Brownian motions. We first introduce the independent coupling of Dirichlet forms.

Let $X$ be {an} $m^X$-symmetric Markov process on a locally compact separable {metric} space $E^X$ whose corresponding Dirichlet form on $L^2(E^X,m^X)$ is $(\mathcal{E}^X,\mathcal{F}^X)$. Similarly, the Markov process $Y$ is $m^Y$-symmetric on another space $E^Y$ whose Dirichlet form is $(\mathcal{E}^Y,\mathcal{F}^Y)$ on $L^2(E^Y,m^Y)$. Note that $m^X$ and $m^Y$ are Radon measures on $E^X$ and $E^Y$ respectively. Our target is the Dirichlet form of the independent coupling process $Z=(X,Y)$ on {$E:=E^X\times E^Y$ with $m:=m^X\times m^Y$.} Before that we {need to give} some notes. Let $(\Omega,\mathcal{M},\mu)$ be a measurable space and $H$ {a real Hilbert space} with the inner product $(\cdot ,\cdot)_H$ and the norm $\|\cdot\|_H$. For any $p\geq 1$,
\[
    L^p(\Omega,H):= \{u:\Omega\rightarrow H|\; \|u(\cdot)\|_H\in L^p(\Omega,\mu)  \}
\]
is a Banach space with the norm $(\int_{\Omega} \|u(\omega)\|_H^p\mu(d\omega))^{\frac{1}{p}}$. Write:
\begin{equation}\label{LXY}
\begin{split}
	L^2(E^Y,\mathcal{F}^X)=\{u&\in L^2(E,m):u(\cdot, y)\in \mathcal{F}^X\;m^Y\text{-a.e. }y,\\&  ||u(\cdot, y)||_{\mathcal{E}_1^X}\in L^2(E^Y, m^Y)\},
\end{split}\end{equation}	

\begin{equation}
\begin{split}
L^2(E^X,\mathcal{F}^Y)=\{u&\in L^2(E,m):u(x,\cdot)\in \mathcal{F}^Y\;m^X\text{-a.e. }x,\\&  ||u(x,\cdot)||_{\mathcal{E}_1^Y}\in L^2(E^X, m^X)\}.
\end{split}
\end{equation}
We have:

\begin{proposition}\label{PROP6}
    Let $X,Y$, $L^2(E^X,m^X)$, $L^2(E^Y,m^Y)$ and $(\mathcal{E}^X,\mathcal{F}^X)$, $(\mathcal{E}^Y,\mathcal{F}^Y)$ be above and $Z=(X,Y)$ the independent coupling process of $X$ and $Y$. Set $E=E^X\times E^Y$ and $m=m^X\times m^Y$. Then $Z$ is $m$-symmetric on $E$ and
    \begin{equation}\label{IC}
     \left\{\begin{array}{ll}
    &\mathcal{F}= L^2(E^Y, \mathcal{F}^X)\cap L^2(E^X,\mathcal{F}^Y),  \\
        &\begin{aligned}\mathcal{E}(u,v)=&\int_Y \mathcal{E}^X(u(\cdot,y),v(\cdot,y))m^Y(dy)\\&+\int_X \mathcal{E}^Y(u(x,\cdot),v(x,\cdot))m^X(dx) ,\quad  {u,v\in \mathcal{F},}\end{aligned}
        \end{array}
        \right.
    \end{equation}
    is a regular Dirichlet form on $L^2(E,m)$ whose corresponding Markov process is $Z$. Moreover if $\mathcal{C}^X$ (resp. $\mathcal{C}^Y$) is a core of $(\mathcal{E}^X,\mathcal{F}^X)$ (resp. $(\mathcal{E}^Y,\mathcal{F}^Y)$), then the tensor product
    \[
        \mathcal{C}:= \mathcal{C}^X\otimes \mathcal{C}^Y
    \]
    is a core of $(\mathcal{E},\mathcal{F})$.
\end{proposition}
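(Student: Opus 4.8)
The plan is to prove this in three stages: first identify the Dirichlet form of the independent coupling $Z=(X,Y)$ probabilistically, then verify that the form displayed in \eqref{IC} coincides with the form of that coupling process, and finally establish regularity by exhibiting the tensor product core.

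First I would recall the construction of the independent product of two symmetric Markov processes. If $(T^X_t)$ and $(T^Y_t)$ are the $L^2$-semigroups of $X$ and $Y$, then $Z=(X,Y)$ is an $m$-symmetric Hunt process on $E=E^X\times E^Y$ whose semigroup is the tensor product $T_t=T^X_t\otimes T^Y_t$, acting on $L^2(E,m)=L^2(E^X,m^X)\otimes L^2(E^Y,m^Y)$. Markovianity, symmetry and strong continuity of $(T_t)$ are inherited coordinatewise, so $(T_t)$ corresponds to a Dirichlet form $(\mathcal{E},\mathcal{F})$. The task is to compute $(\mathcal{E},\mathcal{F})$ explicitly. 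I would use the spectral/quadratic-form characterization $\mathcal{E}(u,u)=\lim_{t\downarrow 0}\frac{1}{t}(u-T_tu,u)_m$ with $\mathcal{F}=\{u: \text{this limit is finite}\}$. Writing the tensor semigroup and Fubini, one gets
\begin{equation*}
\frac{1}{t}(u-T_tu,u)_m=\int_{E^Y}\frac{1}{t}\big(u(\cdot,y)-T^X_tu(\cdot,y),u(\cdot,y)\big)_{m^X}m^Y(dy)+(\text{cross term}),
\end{equation*}
and the cross term $\frac{1}{t}(T^X_tu-T^X_tT^Y_tu,u)_m$ is handled by noting $T^X_t$ is a contraction and passing to the limit in the $Y$-variable. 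Monotone convergence (the integrands $\frac{1}{t}(u-T^X_tu,u)$ are nonnegative and, by spectral calculus, increase as $t\downarrow 0$) lets me exchange limit and integral, which simultaneously yields the formula for $\mathcal{E}(u,u)$ and the domain description: finiteness of the limit forces $u(\cdot,y)\in\mathcal{F}^X$ for $m^Y$-a.e.\ $y$ with $\|u(\cdot,y)\|_{\mathcal{E}^X_1}\in L^2(E^Y,m^Y)$, and symmetrically in the other coordinate. Polarization then gives the bilinear formula \eqref{IC}, and closedness of $(\mathcal{E},\mathcal{F})$ is automatic once it is identified as the form of a strongly continuous symmetric contraction semigroup.

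Next, for regularity, I would show that $\mathcal{C}=\mathcal{C}^X\otimes\mathcal{C}^Y$ is a core. That $\mathcal{C}\subset\mathcal{F}\cap C_c(E)$ is clear: for $u=\varphi\otimes\psi$ with $\varphi\in\mathcal{C}^X$, $\psi\in\mathcal{C}^Y$, each section $u(\cdot,y)=\psi(y)\varphi$ lies in $\mathcal{F}^X$, likewise for the other coordinate, and $u$ has compact support and is continuous. Density in $C_c(E)$ with the uniform norm follows from the Stone–Weierstrass theorem, since $\mathcal{C}^X$ (resp.\ $\mathcal{C}^Y$) is uniformly dense in $C_c(E^X)$ (resp.\ $C_c(E^Y)$) and products of such functions separate points and are an algebra (after the usual truncation/partition-of-unity argument to handle compact supports). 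For $\mathcal{E}_1^{1/2}$-density in $\mathcal{F}$ I would argue as follows: on the linear span of product functions, the bilinear form \eqref{IC} evaluates to
\begin{equation*}
\mathcal{E}_1(\varphi_1\otimes\psi_1,\varphi_2\otimes\psi_2)=\mathcal{E}^X(\varphi_1,\varphi_2)(\psi_1,\psi_2)_{m^Y}+(\varphi_1,\varphi_2)_{m^X}\mathcal{E}^Y(\psi_1,\psi_2)+(\varphi_1,\varphi_2)_{m^X}(\psi_1,\psi_2)_{m^Y},
\end{equation*}
so approximating $\varphi_i$ in $\mathcal{E}^X_1$-norm by elements of $\mathcal{C}^X$ and $\psi_i$ in $\mathcal{E}^Y_1$-norm by elements of $\mathcal{C}^Y$ gives $\mathcal{E}_1^{1/2}$-approximation of any finite sum of product functions by elements of $\mathcal{C}$. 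It therefore remains to show that finite sums of product functions are $\mathcal{E}_1^{1/2}$-dense in $\mathcal{F}$.

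The main obstacle is precisely this last density step: showing that $\mathcal{F}^X\otimes\mathcal{F}^Y$ (algebraic tensor product) is dense in $\mathcal{F}=L^2(E^Y,\mathcal{F}^X)\cap L^2(E^X,\mathcal{F}^Y)$ for the graph norm $\mathcal{E}_1^{1/2}$. The natural route is a vector-valued one: identify $L^2(E^Y,\mathcal{F}^X)$ with the Bochner space $L^2(E^Y,m^Y;(\mathcal{F}^X,\mathcal{E}^X_1))$ of Hilbert-space-valued square-integrable functions, in which simple functions $\sum_i \varphi_i\,\mathbf{1}_{B_i}$ (with $\varphi_i\in\mathcal{F}^X$, $B_i\subset E^Y$ measurable) are dense, then approximate the indicators $\mathbf{1}_{B_i}$ by $\mathcal{C}^Y$-functions — but this must be done compatibly with the second constraint $u\in L^2(E^X,\mathcal{F}^Y)$, i.e.\ the approximation must converge in both graph norms simultaneously. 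I would handle this by working with the resolvent: for $u\in\mathcal{F}$ the functions $G_\alpha u$ (in fact $\alpha G_\alpha u\to u$ in $\mathcal{E}_1^{1/2}$ as $\alpha\to\infty$) have the product structure built in since $G_\alpha$ for the tensor semigroup is an integral against $G^X_\bullet\otimes G^Y_\bullet$, and approximating $u$ first by nice $L^2$-functions, applying $\alpha G_\alpha$, and using that $G^X_\alpha(\mathcal{C}^X)$, $G^Y_\alpha(\mathcal{C}^Y)$ are $\mathcal{E}_1$-dense, one gets an approximation inside $\mathcal{F}$ that can then be truncated to compact support. This is the place where one must be careful, and I expect the cleanest writeup to lean on the fact (which can be cited from the Dirichlet forms literature on products, e.g.\ the construction of product Dirichlet spaces) that the form $(\mathcal{E},\mathcal{F})$ defined by \eqref{IC} is already closed and that $\mathcal{C}^X\otimes\mathcal{C}^Y$ generates the same semigroup, so by uniqueness it is a core.
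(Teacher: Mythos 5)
Your overall architecture is sound and you have correctly isolated the real difficulty (the $\mathcal{E}_1^{1/2}$-density of finite sums of products in $\mathcal{F}$), but your route to the central identification is genuinely different from the paper's. You propose to compute the form of $Z$ directly from the approximate forms $\frac{1}{t}(u-T_tu,u)_m$, splitting off a cross term; the paper instead works entirely at the generator/resolvent level. It introduces three objects --- the form $(\mathcal{E},\mathcal{F})$ of \eqref{IC} (shown to be closed by the Cauchy-sequence argument you also sketch), the process $Z$, and the closure of $\mathcal{C}$ --- verifies on $\mathcal{D}(A^X)\otimes\mathcal{D}(A^Y)$ that all three generators act as $A^Xf\otimes g+f\otimes A^Yg$, and then uses $A^{\mathcal{C}}T^Z_t(f\otimes g)=A^ZT^Z_t(f\otimes g)$ plus closedness of $A^{\mathcal{C}}$ to conclude $G_\alpha=G^{\mathcal{C}}_\alpha=G^Z_\alpha$ on a dense set, hence everywhere. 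This one computation simultaneously identifies $(\mathcal{E},\mathcal{F})$ as the form of $Z$ \emph{and} proves $\overline{\mathcal{C}}^{\mathcal{E}_1}=\mathcal{F}$, which is exactly the step you end up deferring to ``uniqueness of the generated semigroup'' --- so your fallback for the hardest step is in fact the paper's method.

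Two places in your plan need more care before they would stand on their own. First, monotone convergence does not apply to the cross term as written: $\frac{1}{t}\bigl((I-T^Y_t)u,\,T^X_tu\bigr)_m$ is not monotone in $t$ because $T^X_tu$ moves with $t$. The correct handling is the sandwich $0\le\frac{1}{t}\bigl((I-T^Y_t)u,T^X_tu\bigr)=\frac{1}{t}\lVert T^X_{t/2}(I-T^Y_t)^{1/2}u\rVert^2\le\frac{1}{t}\bigl((I-T^Y_t)u,u\bigr)$, combined with lower semicontinuity of the approximate forms along the $L^2$-convergent family $T^X_{t/2}u\to u$; your appeal to ``$T^X_t$ is a contraction'' gestures at this but does not deliver it. Second, the resolvent of the product semigroup does not factor as $G^X_\alpha\otimes G^Y_\alpha$; what saves your density argument is that $G^Z_\alpha(f\otimes g)=\int_0^\infty e^{-\alpha t}\,T^X_tf\otimes T^Y_tg\,dt$ is an $\mathcal{E}_1$-convergent Bochner integral of product functions, hence an $\mathcal{E}_1$-limit of finite sums of products --- but using this presupposes that $G_\alpha=G^Z_\alpha$, i.e.\ the identification you are in the middle of proving, so the order of the argument must be arranged to avoid circularity. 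Both points are repairable, and the result is standard, but as written these are the two load-bearing steps left unproved.
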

\begin{proof}
    Let $(P^X_t),(P^Y_t),(P^Z_t) $ be the transition functions of $X,Y$ and $Z$ respectively. For any $f_1,f_2\in bL^2(E^X,m^X)$ and $g_1,g_2\in bL^2(E^Y,m^Y)$,
    \[\begin{split}
        (P^Z_t(f_1\otimes g_1),f_2\otimes g_2)_m &=(P_t^Xf_1\otimes P_t^Yg_1,f_2\otimes g_2)m \\
                                            &=(P_t^Xf_1,f_2)_{m^X}\cdot (P_t^Yg_1,g_2)_{m^Y} \\
                                            &=(f_1,P_t^Xf_2)_{m^X}\cdot (g_1,P_t^Yg_2)_{m^Y} \\
                                            &=(f_1\otimes g_1,P^Z_t(f_2\otimes g_2))_m.
    \end{split}\]
Therefore $(P^Z_t)_{t\geq 0}$ (i.e. $Z$) is $m$-symmetric.

We are going to prove that $(\mathcal{E},\mathcal{F})$ is a Dirichlet form on $L^2(E,m)$. Clearly $\mathcal{F}\subset L^2(E,m)$ and $\mathcal{C}\subset \mathcal{F}$, hence $\mathcal{F}$ is dense in $L^2(E,m)$. The bilinearity and Markov property are trivial. We need only to prove that $(\mathcal{E},\mathcal{F})$ is closed. For any $\mathcal{E}_2$-Cauchy sequence $\{u_n\}\subset \mathcal{F}$ and any $n,m$,
\[
    \|u_n-u_m\|_{\mathcal{E}_2}^2=\int \|(u_n-u_m)(\cdot,y)\|^2_{\mathcal{E}_1^X}m^Y(dy)+\int \|(u_n-u_m)(x,\cdot)\|^2_{\mathcal{E}_1^Y}m^X(dx),
\]
hence $u_n$ is also $L^2(E^Y,\mathcal{F}^X)$-Cauchy and $L^2(E^X,\mathcal{F}^Y)$-Cauchy. Then there exist $u\in L^2(E^Y,\mathcal{F}^X)$ and $v\in L^2(E^X,\mathcal{F}^Y)$ such that $u_n$ is convergent to $u$ (resp. $v$) in $L^2(E^Y,\mathcal{F}^X)$ (resp. $L^2(E^X,\mathcal{F}^Y)$). But $u_n$ is $L^2(E,m)$-convergent to $u$ (resp. $v$), hence $u=v\in \mathcal{F}$ and $u_n$ is convergent to $u$ in $\mathcal{E}_2$.

We shall now give some notations  for convenience. The semigroups, resolvents and generators of $X,Y,Z$ are denoted by $(T^X_t, G^X_\alpha, A^X)$, $(T^Y_t, G^Y_\alpha, A^Y)$ and  $(T^Z_t, G^Z_\alpha, A^Z)$ respectively. We have proved that $(\mathcal{E},\mathcal{F})$ is a Dirichlet form, whose semigroup, resolvent and generator are denoted by $T_t,G_\alpha$ and $A$. As $\mathcal{C}\subset \mathcal{F}$ is dense in $C_\mathrm{c}(E)$ by Stone-Weierstrass theorem, its closure $\overline{\mathcal{C}}^{\mathcal{E}_1}\subset \mathcal{F}$ is a regular Dirichlet space. The semigroup, resolvent and generator of $(\overline{\mathcal{C}}^{\mathcal{E}_1},\mathcal{E})$ are denoted by $T^{\mathcal{C}}_t,G_\alpha^{\mathcal{C}}$ and $A^{\mathcal{C}}$. Note that for any $ f\in L^2(E^X,m^X),g\in L^2(E^Y,m^Y),t>0$,
\begin{equation}\label{24}
    T^Z_t(f\otimes g)=T^X_tf\otimes T^Y_tg.
\end{equation}
In addition, one can easily check that $\mathcal{F}^X\otimes \mathcal{F}^Y\subset \mathcal{F}^{\mathcal{C}}$.

We are now going to prove that for any $\alpha>0,\;G_\alpha=G_\alpha^Z=G_\alpha^{\mathcal{C}}$ which implies that $(\mathcal{E},\mathcal{F})$ is the associated Dirichlet form of $Z$ with the core $\mathcal{C}$. In particular, $(\mathcal{E},\mathcal{F})$ is regular.

Firstly we claim that $\mathcal{D}(A^X)\otimes \mathcal{D}(A^Y)\subset \mathcal{D}(A^Z)\cap \mathcal{D}(A^{\mathcal{C}})$ and for any $ f\in \mathcal{D}(A^X)$, $g\in\mathcal{D}(A^Y)$, it holds that
\begin{equation}\label{25}
    A^Z(f\otimes g)=A^{\mathcal{C}}(f\otimes g)=A^Xf\otimes g+f\otimes A^Yg.
\end{equation}
In fact, $f\otimes g\in \mathcal{F}^X\otimes \mathcal{F}^Y\subset \mathcal{F}^{\mathcal{C}}$ and for any $h\in \mathcal{F}^{\mathcal{C}}$
\[\begin{split}
    \mathcal{E}(f\otimes g,h)&=\int\mathcal{E}^X(f,h(\cdot,y))g(y)m^Y(dy)+\int\mathcal{E}^Y(g,h(x,\cdot))f(x)m^X(dx) \\
                            &=\int(-A^Xf,h(\cdot,y))_{m^X}g(y)m^Y(dy)   \\
                            &\qquad+\int(-A^Yg,h(x,\cdot))_{m^Y}f(x)m^X(dx)    \\
                            &{=(-A^Xf\cdot g-f\cdot A^Yg,h)_m.}
\end{split}\]
It is similar to see that $f\otimes g\in \mathcal{F}$ and
\[
    {\mathcal{E}(f\otimes g,\tilde{h})=(-A^Xf\cdot g-f\cdot A^Yg,\tilde{h})_m,\quad  \tilde{h}\in \mathcal{F}.}
\]
Hence (\ref{25}) is proved.

On the other hand, $f\in \mathcal{D}(A^X),g\in\mathcal{D}(A^Y)$ implies that $T_t^Xf\in \mathcal{D}(A^X),T^Y_tg\in\mathcal{D}(A^Y)$. By (\ref{24}) and (\ref{25}), $T^Z_t(f\otimes g)\in \mathcal{D}(A^X)\otimes \mathcal{D}(A^Y)$ and
\[\begin{split}
    A^\mathcal{C}T^Z_t(f\otimes g)&=A^\mathcal{C}(T^X_tf\otimes T^Y_tg) \\
                                &{=A^XT^X_t f\cdot T^Y_tg+T^X_tf\cdot A^YT^Y_tg} \\
                                &{=\frac{d}{dt}T^X_tf\cdot T^Y_tg+T^X_tf\cdot\frac{d}{dt}T^Y_tg}\\
                                &=\frac{d}{dt}T^Z_t(f\otimes g)\\
                                &=A^ZT^Z_t(f\otimes g)
\end{split}\]
As $A^\mathcal{C}$ is a closed linear operator and $\int_0^\infty A^ZT^Z_t(f\otimes g)e^{-\alpha t}dt$ is Bochner integrable,
\[\begin{split}
    A^\mathcal{C}G^Z_\alpha(f\otimes g)&= A^\mathcal{C}\int_0^\infty e^{-\alpha t}T^Z_t(f\otimes g)dt \\
                                    &=\int_0^\infty e^{-\alpha t}A^\mathcal{C}T^Z_t(f\otimes g)dt \\
                                    &=\int_0^\infty e^{-\alpha t}A^ZT^Z_t(f\otimes g)dt   \\
                                    &=A^Z\int_0^\infty e^{-\alpha t}T^Z_t(f\otimes g)dt    \\
                                    &=A^ZG^Z_\alpha(f\otimes g).
\end{split}\]
Then it follows that
\[
	(\alpha-A^\mathcal{C})G^Z_\alpha (f\otimes g)=(\alpha-A^Z) G^Z_\alpha (f\otimes g)=f\otimes g.
\]
Making use of $G^\mathcal{C}_\alpha=(\alpha-A^\mathcal{C})^{-1}$, we have
\[
    G^Z_\alpha (f\otimes g)=G^\mathcal{C}_\alpha (f\otimes g)
\]
Similarly we can prove $G^Z_\alpha (f\otimes g)=G_\alpha (f\otimes g)$. Therefore for any $\alpha>0$,
    \[
        G_\alpha=G^\mathcal{C}_\alpha=G^Z_\alpha
    \]
on $\mathcal{D}(A^X)\otimes \mathcal{D}(A^Y)$ which is dense in $L^2(E,m)$. As they are bounded operators on $L^2(E,m)$, it also holds
\[
	G_\alpha h=G^\mathcal{C}_\alpha h=G^Z_\alpha h,{\quad h\in L^2(E,m).}
\]
That completes the proof.
\end{proof}

This proposition can be easily extended to the higher dimensional independent coupling of Markov processes. Let $X^i$ be {an} $m_i$-symmetric Markov process on $E^i$ whose associated Dirichlet form is $(\mathcal{E}^i,\mathcal{F}^i)$ where $1\leq i\leq d$ and $d$ is {an arbitrary natural number.} For any $1\leq i\leq d$, set
\begin{eqnarray*}
E:&=& E^1\times \cdots \times E^d, \\
\hat{E}^i:&=& E^1\times \cdots \times E^{i-1}\times  E^{i+1}\cdots\times  E^d,
\end{eqnarray*}
and for any $x=(x_1,\cdots,x_d)\in E$, $\hat{x}_i\in \hat{E}^i$ is the projection of $x$ on $\hat{E}^i$. For any function $f$ on $E$, set $f_{\hat{x}_i}(x_i):= f(x)$ for any $x\in E$. Similarly $m:= m_1\times \cdots \times m_d$ and $\hat{m}_i$ is its projection on $\hat{E}^i$. Note that if $X^1,\cdots,X^d$ are independent, $X$ is symmetric if and only if {$X^i$ is $m_i$-symmetric} for any $1\leq i\leq d$ and $X$ is Markov if and only if $X^i$ is Markov for any $1\leq i\leq d$. The following corollary is direct by \text{Proposition \ref{PROP6}} and we omit its proof.

\begin{corollary}\label{COR5}
    Let $X=(X^1,\cdots,X^d)$ be the independent coupling process of $X^1,\cdots, X^d$. Then $X$ is $m$-symmetric on $E$ and its associated Dirichlet form is
    \begin{equation}\label{16}
    \begin{split}
\mathcal{F}&=\cap_{i=1}^d L^2(\hat{E}^i, \mathcal{F}^i),  \\
\mathcal{E}(u,v)&=\sum_{i=1}^d\int \mathcal{E}^i(f_{\hat{x}_i}(x_i),g_{\hat{x}_i}(x_i))\hat{m}^i(d\hat{x}_i)\quad { f,g\in \mathcal{F}.}
        \end{split}
    \end{equation}
Note that $L^2(\hat{E}^i, \mathcal{F}^i)$ is defined similar as (\ref{LXY}).    Moreover, $(\mathcal{E},\mathcal{F})$ is regular and if $\mathcal{C}^i$ is a core of $(\mathcal{E}^i,\mathcal{F}^i)$, $1\leq i\leq d$, then the tensor product
    \[
        \mathcal{C}:= \mathcal{C}^1\otimes \cdots \otimes \mathcal{C}^d
    \]
    is a core of $(\mathcal{E},\mathcal{F})$.
\end{corollary}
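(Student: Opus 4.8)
The plan is to prove Corollary \ref{COR5} by induction on $d$, using Proposition \ref{PROP6} as the two-factor building block. The case $d=1$ is vacuous. Assume the statement holds with $d$ replaced by $d-1$. Put $\tilde{E}:=E^1\times\cdots\times E^{d-1}$, $\tilde{m}:=m_1\times\cdots\times m_{d-1}$, and let $\tilde{X}:=(X^1,\cdots,X^{d-1})$ be the independent coupling of the first $d-1$ processes. By the inductive hypothesis, $\tilde{X}$ is $\tilde{m}$-symmetric, its Dirichlet form $(\tilde{\mathcal{E}},\tilde{\mathcal{F}})$ on $L^2(\tilde{E},\tilde{m})$ is the $(d-1)$-version of (\ref{16}) and is regular, and $\tilde{\mathcal{C}}:=\mathcal{C}^1\otimes\cdots\otimes\mathcal{C}^{d-1}$ is a core of it.

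Next, $X=(X^1,\cdots,X^d)$ is exactly the independent coupling of the two processes $\tilde{X}$ on $\tilde{E}$ and $X^d$ on $E^d$, with $E=\tilde{E}\times E^d$ and $m=\tilde{m}\times m_d$ (a finite product of locally compact separable metric spaces is again such, and $\tilde{m}$ is Radon, so the hypotheses of Proposition \ref{PROP6} are met). Applying Proposition \ref{PROP6} to this pair, $X$ is $m$-symmetric on $E$, its Dirichlet form is
\begin{align*}
\mathcal{F}&=L^2(E^d,\tilde{\mathcal{F}})\cap L^2(\tilde{E},\mathcal{F}^d),\\
\mathcal{E}(u,v)&=\int_{E^d}\tilde{\mathcal{E}}\bigl(u(\cdot,x_d),v(\cdot,x_d)\bigr)\,m_d(dx_d)\\
&\quad+\int_{\tilde{E}}\mathcal{E}^d\bigl(u(\tilde{x},\cdot),v(\tilde{x},\cdot)\bigr)\,\tilde{m}(d\tilde{x}),
\end{align*}
and $\tilde{\mathcal{C}}\otimes\mathcal{C}^d$ is a core of $(\mathcal{E},\mathcal{F})$.

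It remains to recast this in the symmetric form (\ref{16}). Insert the inductive description of $(\tilde{\mathcal{E}},\tilde{\mathcal{F}})$ and use the identifications $\tilde{E}^{\hat{i}}\times E^d=\hat{E}^i$, $\hat{\tilde{m}}^i\times m_d=\hat{m}^i$ for $1\le i\le d-1$ (where $\tilde{E}^{\hat{i}}$ and $\hat{\tilde{m}}^i$ are the ``omit the $i$-th factor'' objects built inside $\tilde{E}$), together with $\hat{E}^d=\tilde{E}$, $\hat{m}^d=\tilde{m}$. For the energy, a single application of Fubini's theorem merges the outer $m_d$-integral with each of the $d-1$ inner integrals over $\tilde{E}^{\hat{i}}$, turning the first term above into $\sum_{i=1}^{d-1}\int\mathcal{E}^i(u_{\hat{x}_i},v_{\hat{x}_i})\,\hat{m}^i(d\hat{x}_i)$, while the second term is the $i=d$ summand; this is exactly (\ref{16}). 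For the domain, a Tonelli argument applied to the nonnegative functions $\|u_{\hat{x}_i}\|_{\mathcal{E}^i_1}^2$ shows that the nested condition ``$u(\cdot,x_d)\in\tilde{\mathcal{F}}$ for $m_d$-a.e.\ $x_d$, with $\tilde{\mathcal{E}}_1^{1/2}$-norm in $L^2(E^d,m_d)$'' unwinds, for each $i\le d-1$, to ``$u_{\hat{x}_i}\in\mathcal{F}^i$ for $\hat{m}^i$-a.e.\ $\hat{x}_i$, with $\mathcal{E}^i_1{}^{1/2}$-norm in $L^2(\hat{E}^i,\hat{m}^i)$'', i.e.\ to $u\in\bigcap_{i=1}^{d-1}L^2(\hat{E}^i,\mathcal{F}^i)$; since $L^2(\tilde{E},\mathcal{F}^d)=L^2(\hat{E}^d,\mathcal{F}^d)$, this gives $\mathcal{F}=\bigcap_{i=1}^d L^2(\hat{E}^i,\mathcal{F}^i)$. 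Finally, associativity of the algebraic tensor product yields $\tilde{\mathcal{C}}\otimes\mathcal{C}^d=\mathcal{C}^1\otimes\cdots\otimes\mathcal{C}^d$, so this is a core of $(\mathcal{E},\mathcal{F})$, completing the induction.

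The only point requiring care is the measure-theoretic bookkeeping in the last paragraph: one must verify that the ``for a.e.\ $x_d$'' and ``for a.e.\ $\hat{x}_i$ in $\tilde{E}^{\hat{i}}$'' quantifiers combine correctly into ``for a.e.\ $\hat{x}_i$ in $\hat{E}^i$'' (this is where Tonelli on the joint product space is invoked) and that the slice $(u(\cdot,x_d))_{\hat{x}'_i}$ of the slice $u(\cdot,x_d)$ agrees with the single slice $u_{\hat{x}_i}$ as a jointly measurable object. Everything else is a formal rewrite, which is why the corollary follows ``directly'' from Proposition \ref{PROP6}.
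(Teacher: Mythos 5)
Your induction on $d$, bootstrapping the two-factor Proposition \ref{PROP6} and then unwinding the nested domain and energy via Tonelli/Fubini, is correct and is exactly the argument the paper has in mind: the paper omits the proof entirely, remarking only that the corollary is ``direct by Proposition \ref{PROP6}''. The measure-theoretic bookkeeping you flag (combining the a.e.\ quantifiers and matching iterated slices with single slices on $\hat{E}^i$) is handled correctly, so nothing further is needed.
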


Since the $d$-dimensional Brownian motion is an independent coupling process of one-dimensional Brownian motions, we could conclude some assertions about the regular subspaces of $d$-mensional Brownian motions. It is well known that $(\frac{1}{2}\mathbf{D},H^1(\mathbf{R}^d))$ (or equivalently the L\'evy-type Dirichlet form with $S=I_d,j=0$) is the associated Dirichlet form of $d$-dimensional Brownian motion. Choose  $s_i\in \mathbf{S}(\mathbf{R})$ for any $1\leq i\leq d$ and let $X^{s_i}$ be the R-subprocess of one-dimensional Brownian motion with respect to $s_i$ appeared in \text{Theorem \ref{THMBM}}. Denote the independent coupling process of $X^{s_1},\cdots,X^{s_d}$ by $(X^{s_1},\cdots,X^{s_d})$ and its corresponding Dirichlet form by $(\mathcal{E}^{s_1,\cdots,s_d},\mathcal{F}^{s_1,\cdots, s_d})$. Define
\[
	\mathcal{C}^{s_1,\cdots,s_d}:=C_\mathrm{c}^1(s_1)\otimes \cdots \otimes C_\mathrm{c}^1(s_d),
\]
where $C_\mathrm{c}^1(s_i)=\{{\phi\circ s_i}:\phi\in C_\mathrm{c}^1(J_i)\text{ where }J_i=s_i(\mathbf{R})\}$ for any $1\leq i\leq d$.

%\begin{corollary}\label{COR7}
    %If $Y^i$ is a R-subprocess of $X^i$ for any $ 1\leq i\leq d$, then the independent coupling process $Y=(Y^1,\cdots,Y^d)$ is a R-subprocess of $X=(X^1,\cdots,X^d)$. In particular, if anyone of $Y^i$ is proper, then $Y$ is proper too.
%\end{corollary}

%Now consider the $d$-dimensional Brownian motion cases, i.e. $S=I_d,J=0$. Here $(\mathcal{E},\mathcal{F})= (\frac{1}{2}\mathbf{D},H^1(\mathbf{R}^d))$. By the proof of \text{Theorem \ref{existence}}  we have known that it possess proper regular subspaces with core $\mathcal{C}^{s_1,\cdots,s_d}=C_\mathrm{c}^1(s_1)\otimes \cdots \otimes C_\mathrm{c}^1(s_d)$, here $s_i\in \mathbf{S}(\mathbf{R})\,\forall 1\leq i\leq d$. Set $X^{s_i}$ to be the R-subprocess of one dimensional Brownian motion appeared in \text{Theorem \ref{THMBM}}.

\begin{theorem}\label{PROP7}
   For any $s_1,\cdots,s_d\in \mathbf{S}(\mathbf{R})$,  $X:=(X^{s_1},\cdots,X^{s_d})$ is {an} R-subprocess of $d$-dimensional Brownian motion, in other words, the associated Dirichlet form $(\mathcal{E}^{s_1,\cdots,s_d},\mathcal{F}^{s_1,\cdots, s_d})$ of $X$ is a regular subspace of $(\frac{1}{2}\mathbf{D},H^1(\mathbf{R}^d))$ and $\mathcal{C}^{s_1,\cdots,s_d}$ is a core of $(\mathcal{E}^{s_1,\cdots,s_d},\mathcal{F}^{s_1,\cdots, s_d})$. In particular, if $|E_{s_i}|>0$ for some $1\leq i\leq d$, then the regular subspace $(\mathcal{E}^{s_1,\cdots,s_d},\mathcal{F}^{s_1,\cdots, s_d})$  is a proper one.
\end{theorem}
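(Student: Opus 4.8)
The plan is to reduce everything to the one-dimensional picture of Theorem \ref{THMBM} by applying the independent coupling formula of Corollary \ref{COR5} twice. First I would observe that the $d$-dimensional Brownian motion is the independent coupling of $d$ copies of one-dimensional Brownian motion, so Corollary \ref{COR5} (with each $X^i$ the one-dimensional Brownian motion, $\mathcal{E}^i=\frac{1}{2}\mathbf{D}$, $\mathcal{F}^i=H^1(\mathbf{R})$) realises $(\frac{1}{2}\mathbf{D},H^1(\mathbf{R}^d))$ as the coupling form
$H^1(\mathbf{R}^d)=\bigcap_{i=1}^d L^2(\hat{E}^i,H^1(\mathbf{R}))$, $\frac{1}{2}\mathbf{D}(u,v)=\sum_{i=1}^d\int \frac{1}{2}\mathbf{D}(u_{\hat{x}_i},v_{\hat{x}_i})\,\hat{m}^i(d\hat{x}_i)$, here with $E^i=\mathbf{R}$. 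On the other side, Corollary \ref{COR5} applied to $X^{s_1},\dots,X^{s_d}$ gives the analogous description of $(\mathcal{E}^{s_1,\dots,s_d},\mathcal{F}^{s_1,\dots,s_d})$ with $\mathcal{F}^{(s_i)}$ and $\mathcal{E}^{(s_i)}$ in place of $H^1(\mathbf{R})$ and $\frac{1}{2}\mathbf{D}$, and it already yields that this form is regular with core $\mathcal{C}^{s_1,\dots,s_d}=C_\mathrm{c}^1(s_1)\otimes\cdots\otimes C_\mathrm{c}^1(s_d)$, since each $C_\mathrm{c}^1(s_i)$ is a (special standard) core of $(\mathcal{E}^{(s_i)},\mathcal{F}^{(s_i)})$ by Theorem \ref{THMBM}(2). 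This settles the core assertion.

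Next I would establish $(\mathcal{E}^{s_1,\dots,s_d},\mathcal{F}^{s_1,\dots,s_d})\prec(\frac{1}{2}\mathbf{D},H^1(\mathbf{R}^d))$. For $u\in\mathcal{F}^{s_1,\dots,s_d}$ and each $i$, the slice $u_{\hat{x}_i}$ lies in $\mathcal{F}^{(s_i)}$ for $\hat{m}^i$-a.e.\ $\hat{x}_i$; by Theorem \ref{THMBM}(3) it then lies in $H^1(\mathbf{R})$ and satisfies $\mathcal{E}^{(s_i)}(u_{\hat{x}_i},u_{\hat{x}_i})=\frac{1}{2}\mathbf{D}(u_{\hat{x}_i},u_{\hat{x}_i})$, so the $L^2(\hat{E}^i,\hat{m}^i)$-integrability of $\hat{x}_i\mapsto\|u_{\hat{x}_i}\|_{\mathcal{E}^{(s_i)}_1}$ transfers verbatim to $\hat{x}_i\mapsto\|u_{\hat{x}_i}\|_{H^1(\mathbf{R})}$; by the coupling description of $H^1(\mathbf{R}^d)$ this forces $u\in H^1(\mathbf{R}^d)$. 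Polarising the slicewise energy identity, integrating in $\hat{x}_i$ and summing over $i$ gives $\mathcal{E}^{s_1,\dots,s_d}(u,v)=\frac{1}{2}\mathbf{D}(u,v)$ for all $u,v\in\mathcal{F}^{s_1,\dots,s_d}$, which together with the regularity of both forms is exactly Definition \ref{D1}; the corresponding statement for the processes is then immediate. (Alternatively one could invoke the converse half of Theorem \ref{THM3}: the Beurling--Deny triple of $H^1(\mathbf{R}^d)$ is $(\frac{1}{2}\mathbf{D},0,0)$, the coupling form inherits zero jumping and killing measures by Corollary \ref{COR1}, and the strongly local energy measures coincide by the same slicing; but the direct computation is shorter.)

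For the last assertion, assume $|E_{s_i}|>0$, say $i=1$. By Theorem \ref{THMBM}(3) we have $\mathcal{F}^{(s_1)}\subsetneqq H^1(\mathbf{R})$, so fix $w\in H^1(\mathbf{R})\setminus\mathcal{F}^{(s_1)}$ and any $\phi\in C_\mathrm{c}^\infty(\mathbf{R}^{d-1})$ with $\phi\not\equiv0$, and put $u(x_1,\hat{x}_1):=w(x_1)\phi(\hat{x}_1)$. A routine weak-derivative computation shows $u\in H^1(\mathbf{R}^d)$, since $u$ is a product of $H^1$ functions in disjoint blocks of variables. However, for $\hat{x}_1$ in the positive-measure set $\{\phi\neq0\}$ the slice $u_{\hat{x}_1}=\phi(\hat{x}_1)\,w$ is a nonzero scalar multiple of $w$, hence is not in the linear space $\mathcal{F}^{(s_1)}$; consequently $u\notin L^2(\hat{E}^1,\mathcal{F}^{(s_1)})$ and a fortiori $u\notin\mathcal{F}^{s_1,\dots,s_d}$. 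Thus $\mathcal{F}^{s_1,\dots,s_d}\subsetneqq H^1(\mathbf{R}^d)$, so the regular subspace is proper.

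I do not expect a genuine difficulty here; the only delicate points are the Fubini and measurability bookkeeping in the slicing step, which is already packaged into Corollary \ref{COR5} and Proposition \ref{PROP6}, and checking that the separating function $u=w\otimes\phi$ really violates the slicewise membership condition defining $\mathcal{F}^{s_1,\dots,s_d}$. Both become routine once the coupling representation of $H^1(\mathbf{R}^d)$ is in hand, which is itself just an instance of Corollary \ref{COR5}.
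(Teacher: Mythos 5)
Your proposal is correct and follows exactly the route the paper intends: the paper itself dismisses the proof as immediate from Theorem \ref{THMBM} and Corollary \ref{COR5}, and your argument is a faithful fleshing-out of that, applying the coupling representation to both $(\frac{1}{2}\mathbf{D},H^1(\mathbf{R}^d))$ and $(\mathcal{E}^{s_1,\dots,s_d},\mathcal{F}^{s_1,\dots,s_d})$ and transferring the slicewise identities from the one-dimensional theorem. The explicit separating function $w\otimes\phi$ for the properness claim is a detail the paper leaves implicit, and your handling of it (including the observation that slice membership in $\mathcal{F}^{(s_1)}$ is a property of $L^2$-classes) is sound.
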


The proof is obvious by \text{Theorem \ref{THMBM}} and \text{Corollary \ref{COR5}}. Note that $|E_{s_i}|>0$ means $s_i$ is not the scale function of one-dimensional Brownian motion. Usually the R-subprocess of $d$-dimensional Brownian motion is not an independent coupling, but the following corollary is not difficult to prove. So we omit its proof.

\begin{corollary}
    If $X=(X^1,\cdots,X^d)$ is {an} R-subprocess of $d$-dimensional Brownian motion such that $X^1,\cdots,X^d$ are independent and $X^i$ is irreducible for $1\leq i\leq d$, then there exist $s_1,\cdots, s_d\in \mathbf{S}(\mathbf{R})$ such that $X^i=X^{s_i}$, for any $1\leq i\leq d$.
\end{corollary}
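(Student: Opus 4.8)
The plan is to reduce everything to the one-dimensional characterization in Theorem \ref{THMBM} together with the independent-coupling description of the Dirichlet form in Corollary \ref{COR5}. Suppose $X=(X^1,\dots,X^d)$ is an R-subprocess of $d$-dimensional Brownian motion with $X^1,\dots,X^d$ independent and each $X^i$ irreducible. First I would observe that, by Corollary \ref{COR1}, the Dirichlet form of $X$ is strongly local, and hence so is its restriction to each coordinate: since $X^1,\dots,X^d$ are independent, Corollary \ref{COR5} identifies $\mathcal{E}$ as a sum of coordinate-wise forms, and the marginal form $(\mathcal{E}^i,\mathcal{F}^i)$ of $X^i$ on $L^2(\mathbf{R},dx_i)$ is strongly local on $\mathbf{R}$ (the marginal measure of Lebesgue measure on $\mathbf{R}^d$ is Lebesgue measure on $\mathbf{R}$, up to the irrelevant product constant). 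Since $X^i$ is an irreducible one-dimensional diffusion without killing (no killing by the standing assumption after Theorem \ref{SPT}, or directly by Theorem \ref{THM3}, because $d$-dimensional Brownian motion has no jumping or killing measure), it is characterized by a speed measure and a strictly increasing continuous scale function $s_i$.

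Next I would pin down the speed measure and the scale function. The key point is that each $(\mathcal{E}^i,\mathcal{F}^i)$ must be a regular subspace of $(\tfrac12\mathbf{D},H^1(\mathbf{R}))$. This is forced by the sum decomposition in \eqref{16}: for a function $u$ on $\mathbf{R}^d$ depending only on the $i$-th coordinate, $\mathcal{E}(u,u)$ equals (up to the product of the masses of the other factors, which I normalize) $\mathcal{E}^i$ applied to that coordinate function, while at the same time $\mathcal{E}(u,u)=\tfrac12\mathbf{D}(u,u)=\tfrac12\int (u')^2$ because $(\mathcal{E},\mathcal{F})\prec(\tfrac12\mathbf{D},H^1(\mathbf{R}^d))$; testing against a suitable dense family of such coordinate functions (using regularity of $\mathcal{E}^i$ and that $C^1_{\mathrm c}(s_i)$-type functions of one coordinate lie in the appropriate space) shows $\mathcal{F}^i\subset H^1(\mathbf{R})$ with $\mathcal{E}^i=\tfrac12\mathbf{D}$ on $\mathcal{F}^i$. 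Hence $(\mathcal{E}^i,\mathcal{F}^i)\prec(\tfrac12\mathbf{D},H^1(\mathbf{R}))$ and $X^i$ is an R-subprocess of one-dimensional Brownian motion, whose speed measure must therefore be Lebesgue measure (the explanation of $\mathbf{S}(\mathbf{R})$ given after Theorem \ref{THMBM}: in the one-dimensional regular subspace relation, $\tfrac12\int(du/ds)^2ds=\tfrac12\int(u')^2dx$ forces $s'=0$ or $1$ a.e., and the speed measure is Lebesgue). Applying Theorem \ref{THMBM}(3) now yields $s_i\in\mathbf{S}(\mathbf{R})$ (after normalizing $s_i(0)=0$, which is allowed since the scale function is unique only up to an affine change) and $(\mathcal{E}^i,\mathcal{F}^i)=(\mathcal{E}^{(s_i)},\mathcal{F}^{(s_i)})$.

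Finally, once each marginal is $(\mathcal{E}^{(s_i)},\mathcal{F}^{(s_i)})$ with $s_i\in\mathbf{S}(\mathbf{R})$, Corollary \ref{COR5} shows that the Dirichlet form of $X=(X^1,\dots,X^d)$ coincides with $(\mathcal{E}^{s_1,\dots,s_d},\mathcal{F}^{s_1,\dots,s_d})$, the form of the independent coupling of $X^{s_1},\dots,X^{s_d}$, and therefore $X^i=X^{s_i}$ for each $i$ (a diffusion is determined by its speed measure and scale function, or: two regular Dirichlet forms on the same $L^2$ that agree are the same, so the associated Markov processes agree up to the usual equivalence). I expect the main obstacle to be the middle step: namely carefully justifying that the marginal form $(\mathcal{E}^i,\mathcal{F}^i)$ is genuinely a regular subspace of one-dimensional Brownian motion — this requires checking that restricting $\mathcal{E}$ to functions of a single coordinate really reproduces $\mathcal{E}^i$ (a Fubini-type argument using \eqref{16} and the fact that the product measure of the remaining coordinates is nonzero and Radon), that $\mathcal{F}^i$ is the right domain (not a priori obvious that $u\in\mathcal{F}^i$ implies the coordinate extension lies in $\mathcal{F}$, but irreducibility of the $X^j$ for $j\ne i$ and a cutoff/approximation argument handle this), and that regularity of $\mathcal{E}^i$ on $L^2(\mathbf{R})$ holds so that Theorem \ref{THMBM}(3) applies. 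The rest is bookkeeping with Corollary \ref{COR5} and Theorem \ref{THMBM}.
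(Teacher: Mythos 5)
The paper omits its own proof of this corollary, so there is nothing to compare line by line; your overall strategy (pass to the marginal forms via Corollary \ref{COR5}, identify each as a regular subspace of one-dimensional Brownian motion, then invoke Theorem \ref{THMBM} and the discussion following it) is surely the intended one. But the step you yourself single out as the main obstacle is not closed by what you write, and as stated it has a real gap. A function depending only on the $i$-th coordinate is never in $L^2(\mathbf{R}^d)$ --- the ``product of the masses of the other factors'' is infinite --- so you must test with tensor products $w=u\otimes v_2\otimes\cdots\otimes v_d$, $u\in\mathcal{F}^1\cap C_\mathrm{c}(\mathbf{R})$, $v_j\in\mathcal{F}^j\cap C_\mathrm{c}(\mathbf{R})$. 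Then \eqref{16} gives
\[
  \mathcal{E}(w,w)=\mathcal{E}^1(u,u)\prod_{j\ge 2}\|v_j\|_{L^2}^2+\|u\|_{L^2}^2\sum_{j\ge 2}\mathcal{E}^j(v_j,v_j)\prod_{k\neq 1,j}\|v_k\|_{L^2}^2,
\]
and equating this with $\tfrac12\mathbf{D}(w,w)$, which has exactly the same shape, only yields
\[
  \mathcal{E}^1(u,u)=\tfrac12\int(u')^2\,dx+c\,\|u\|_{L^2}^2
\]
for some constant $c$ independent of $u$ (with the coordinate constants summing to zero). Nothing in your sketch rules out $c\neq0$, and until it is ruled out you cannot conclude $(\mathcal{E}^i,\mathcal{F}^i)\prec(\tfrac12\mathbf{D},H^1(\mathbf{R}))$.

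The missing ingredient is available from the paper's own tools. By Corollary \ref{COR1} and Theorem \ref{THM3}, $(\mathcal{E},\mathcal{F})$ is strongly local with zero killing measure; testing \eqref{16} on tensor products whose first factors have disjoint supports (resp.\ one factor constant near the support of the other) shows each $(\mathcal{E}^i,\mathcal{F}^i)$ is itself strongly local with no killing. Now choose $f\in\mathcal{F}^1\cap C_\mathrm{c}(\mathbf{R})$ with $f\equiv1$ on a neighbourhood of $\mathrm{supp}[u]$ (constructed as in the proof of Theorem \ref{THM3}): strong locality gives $\mathcal{E}^1(u,f)=0$, while the polarized form of the displayed identity gives $\mathcal{E}^1(u,f)=c\int u\,dx$; density of the core in $C_\mathrm{c}(\mathbf{R})$ then forces $c=0$. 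With that, the rest of your argument goes through, except for three smaller points you should still make explicit: the regularity of each marginal form on $L^2(\mathbf{R})$ (needed before Theorem \ref{THMBM} applies); the fact that the symmetrizing measures $m_i$ have product equal to Lebesgue measure and hence are each constant multiples of it; and that the characterization of an irreducible regular subspace of $(\tfrac12\mathbf{D},H^1(\mathbf{R}))$ by a scale function in $\mathbf{S}(\mathbf{R})$ comes from the discussion after Theorem \ref{THMBM} (and Example \ref{EXA1}), not from part (3) of that theorem, which is the converse direction.
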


\subsection{Planar absorbing and reflecting Brownian motions}\label{PRAB}

Theorem~\ref{PROP7} can be extended to the absorbing or reflecting Brownian motion on a special domain of $\mathbf{R}^d$, such as the rectangle $G=I_1\times \cdots\times I_d\subset \mathbf{R}^d$, where $I_i$ is an interval of $\mathbf{R}$. It is because that on these rectangles the Brownian motions are  independent couplings of one dimensional absorbing or reflecting Browinan motions. However on other usual domains the independent coupling method is invalid.

When $d=2$, it is known that planar Brownian motion is conformal invariant. In fact the absorbing (or reflecting) Brownian motions on some domains are also conformal invariant. Let $\varphi$ be a conformal mapping from domain $U\subset \mathbf{R}^2$ to domain $V\subset \mathbf{R}^2$ and  $B^U$ the absorbing Brownian motion on $U$. Then up to a time change, $\varphi(B^U)$ is an absorbing Brownian motion on $V$\footnote{See \text{Theorem 5.3.1} of  \cite{CF}.}. In addition, by the Riemann mapping theorem, for any non-trivial simply connected open subsets $U,V$  of $\mathbf{R}^2$, there exists a biholomorphic (bijective and holomorphic) mapping $f$ from $U$ to $V$. Obviously $f$ is homeomorphic and conformal mapping. Thus by {Proposition \ref{PROP2} and \ref{PROP3}} we have

\begin{proposition}\label{PROP8}
    Let $U,V$ be two arbitrary non-trivial (non-empty and not $\mathbf{R}^2$) simply connected domains of $\mathbf{R}^2$, then the associated Dirichlet forms of absorbing Brownian motions on $U$ and $V$ are with the same structure of regular subspaces with respect to the corresponding conformal transformation $f$ and time change. In other words, their regular subspaces also have a similar correspondence with respect to these transformations.
\end{proposition}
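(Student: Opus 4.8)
The strategy is to realize the absorbing Brownian motion on $V$ as the image of the absorbing Brownian motion on $U$ under a spatial homeomorphic transformation followed by a time change with full quasi support, and then to invoke Theorem~\ref{SPT}. Since $U$ and $V$ are non-trivial simply connected domains of $\mathbf{R}^2\cong\mathbf{C}$, the Riemann mapping theorem provides biholomorphisms of $U$ and of $V$ onto the unit disk; composing one with the inverse of the other gives a biholomorphic bijection $f\colon U\to V$. Such an $f$ is in particular a homeomorphism, so $j:=f$ defines a spatial homeomorphic transformation in the sense of (\ref{SHT}); it is moreover conformal, which is what drives the rest of the argument.

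Next I would track the reference measures. The associated Dirichlet form of absorbing Brownian motion on $U$ is $(\frac{1}{2}\mathbf{D},H_0^1(U))$ on $L^2(U,dx)$, $H_0^1(U)$ being the usual Sobolev space. Applying $j$ produces, by (\ref{SHT}), the form $j(\frac{1}{2}\mathbf{D},H_0^1(U))$ on $L^2(V,\hat m)$ with $\hat m:=dx\circ f^{-1}$. Because $f^{-1}$ is holomorphic, the change-of-variables formula gives $\hat m=|(f^{-1})'|^2\,dy$, while the conformal invariance of the two-dimensional Dirichlet integral gives $\hat{\mathcal{E}}(\hat u,\hat u)=\frac{1}{2}\int_U|\nabla(\hat u\circ f)|^2\,dx=\frac{1}{2}\int_V|\nabla\hat u|^2\,dy$; equivalently, and this is the content of Theorem 5.3.1 of \cite{CF} recalled above, $f(B^U)$ is, up to a time change, the absorbing Brownian motion $B^V$ on $V$, the time-change (Revuz) measure being the Lebesgue measure $dy$ on $V$. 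Since $dy$ and $\hat m$ are mutually absolutely continuous with density $|(f^{-1})'|^{-2}$, which is locally bounded and locally bounded away from $0$ (the derivative of a biholomorphism being holomorphic and nowhere vanishing), $dy$ is a Radon smooth measure of full quasi support relative to $j(\frac{1}{2}\mathbf{D},H_0^1(U))$, i.e. $dy\in S_V$ for that form, so the time change by $dy$ is admissible in the sense of Proposition~\ref{PROP3}. Hence the composite $T:=(\text{time change by }dy)\circ j$ carries $(\frac{1}{2}\mathbf{D},H_0^1(U))$ on $L^2(U,dx)$ onto $(\frac{1}{2}\mathbf{D},H_0^1(V))$ on $L^2(V,dy)$.

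Finally, $T$ is a composition of two of the transformations covered by Theorem~\ref{SPT}, so applying that theorem once for $j$ and once for the time change yields the claim: every regular subspace of the absorbing-Brownian-motion form on $V$ is the $T$-image of a unique regular subspace of the corresponding form on $U$, and conversely via $T^{-1}$, which is exactly the asserted correspondence. The one genuinely delicate step is the middle one — the bookkeeping of reference measures and, in particular, verifying that the Lebesgue measure $dy$ on $V$ really lies in $S_V$ relative to the intermediate form $j(\frac{1}{2}\mathbf{D},H_0^1(U))$. This rests on the mutual absolute continuity of $dy$ and $\hat m$ with a locally two-sided-bounded density (a consequence of the non-vanishing of $(f^{-1})'$), together with the conformal-invariance statement Theorem 5.3.1 of \cite{CF}.
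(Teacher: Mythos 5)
Your proposal is correct and follows essentially the same route as the paper: obtain the biholomorphism $f\colon U\to V$ from the Riemann mapping theorem, view it as a spatial homeomorphic transformation, use the conformal invariance of the planar Dirichlet integral (Theorem 5.3.1 of \cite{CF}) so that only a time change remains, and then apply Propositions~\ref{PROP2} and~\ref{PROP3} (equivalently Theorem~\ref{SPT}). The paper states this in two lines; your additional bookkeeping of the reference measures $\hat m=|(f^{-1})'|^2\,dy$ and the verification that $dy$ is a Radon smooth measure of full quasi support for the intermediate form is exactly the detail the paper leaves implicit.
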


In particular, it is known that the absorbing Brownian motion on the upper half plane is the independent coupling process of a one-dimensional Brownian motion on $\mathbf{R}$ and a Brownian motion on $(0,\infty)$ which is absorbing at $0$. By Theorem \ref{THMBM}, {Proposition \ref{PROP6} and \ref{PROP8},} we have

\begin{corollary}
    For any non-trivial simply connected domain $U$ of $\mathbf{R}^2$, the associated Dirichlet form of absorbing Brownian motion on $U$ always has proper regular subspaces.
\end{corollary}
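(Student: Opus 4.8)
The plan is to reduce the general statement to the case of the upper half plane $\mathbf{H}:=\{(x,y)\in\mathbf{R}^2:y>0\}$, which is itself a non-trivial simply connected domain, and there to use the product structure directly. First I would recall, as observed just above the corollary, that the absorbing Brownian motion on $\mathbf{H}$ is the independent coupling of the one-dimensional Brownian motion on $\mathbf{R}$, whose Dirichlet form is $(\frac12\mathbf{D},H^1(\mathbf{R}))$, and of the Brownian motion on $(0,\infty)$ absorbed at $0$, whose Dirichlet form I denote $(\mathcal{E}^Y,\mathcal{F}^Y)$. By Proposition~\ref{PROP6} the associated Dirichlet form $(\mathcal{E},\mathcal{F})$ of the absorbing Brownian motion on $\mathbf{H}$ is then exactly the coupling (\ref{IC}) of these two forms, and it is regular.

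Next I would build a proper regular subspace on $\mathbf{H}$. Choosing $s\in\mathbf{S}(\mathbf{R})$ with $|E_s|>0$ — for instance $s$ the inverse of $x\mapsto c(x)+x$ with $c$ the standard Cantor function — Theorem~\ref{THMBM}(1)(3) gives that $(\mathcal{E}^{(s)},\mathcal{F}^{(s)})$ is a regular Dirichlet form on $L^2(\mathbf{R})$ with $\mathcal{F}^{(s)}\subsetneqq H^1(\mathbf{R})$ and $\mathcal{E}^{(s)}=\frac12\mathbf{D}$ on $\mathcal{F}^{(s)}$. Keeping the second factor unchanged, I let $(\mathcal{E}',\mathcal{F}')$ be the coupling of $(\mathcal{E}^{(s)},\mathcal{F}^{(s)})$ and $(\mathcal{E}^Y,\mathcal{F}^Y)$ furnished again by Proposition~\ref{PROP6}; comparing the two instances of (\ref{IC}) and using $\mathcal{F}^{(s)}\subset H^1(\mathbf{R})$ yields $(\mathcal{E}',\mathcal{F}')\prec(\mathcal{E},\mathcal{F})$. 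To see this subspace is \emph{proper}, note that in the notation of (\ref{LXY}) one has $\mathcal{F}'=L^2\big((0,\infty),\mathcal{F}^{(s)}\big)\cap L^2(\mathbf{R},\mathcal{F}^Y)$: picking $u\in H^1(\mathbf{R})\setminus\mathcal{F}^{(s)}$ and a nonzero $g\in\mathcal{F}^Y$, the function $u\otimes g$ lies in $\mathcal{F}$ (the defining integrability conditions being immediate from $u\in H^1(\mathbf{R})$, $g\in L^2((0,\infty))$), while $(u\otimes g)(\cdot,y)=g(y)u\notin\mathcal{F}^{(s)}$ for all $y$ in the positive-measure set $\{g\neq 0\}$, so $u\otimes g\notin L^2\big((0,\infty),\mathcal{F}^{(s)}\big)$ and hence $u\otimes g\in\mathcal{F}\setminus\mathcal{F}'$. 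Thus $\mathcal{F}'\subsetneqq\mathcal{F}$.

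Finally I would transfer this to an arbitrary non-trivial simply connected $U\subset\mathbf{R}^2$. The Riemann mapping theorem supplies a biholomorphic — hence homeomorphic and conformal — mapping $f\colon\mathbf{H}\to U$, so by Proposition~\ref{PROP8} the absorbing Brownian motions on $\mathbf{H}$ and on $U$ have the same structure of regular subspaces with respect to $f$ and a time change. Since that correspondence is the composition of the bijections of Proposition~\ref{PROP2} (the spatial homeomorphic transformation $f$) and Proposition~\ref{PROP3} (the time change), and since each of these sends a proper regular subspace to a proper one — for the homeomorphic transformation because $\hat u\mapsto\hat u\circ j$ is a linear bijection between the respective Dirichlet spaces, and for the time change because of the identities $\check{\mathcal{F}}=\mathcal{F}_{\mathrm{e}}\cap L^2(E,\mu)$, $\check{\mathcal{F}}_{\mathrm{e}}=\mathcal{F}_{\mathrm{e}}$ together with Lemma~\ref{LEMMA2}(1) — applying the correspondence to the proper regular subspace built in the previous step produces a proper regular subspace of the associated Dirichlet form of the absorbing Brownian motion on $U$.

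Everything except the two properness verifications is a formal assembly of Theorem~\ref{THMBM}, Proposition~\ref{PROP6} and Proposition~\ref{PROP8}, so I expect the only place requiring a little care to be the claim that properness is preserved — namely that an independent coupling one of whose factors is a proper regular subspace is again proper, and that the transformations of \S\ref{IUST} carry $\mathcal{F}'\subsetneqq\mathcal{F}$ to $\tilde{\mathcal{F}}'\subsetneqq\tilde{\mathcal{F}}$; both, as indicated, reduce to bijectivity of the stated correspondences and, for the time change, to the description of the time-changed domain in terms of the extended space.
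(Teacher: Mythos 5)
Your proposal is correct and follows exactly the route the paper intends: the paper derives this corollary directly from Theorem~\ref{THMBM}, Proposition~\ref{PROP6} and Proposition~\ref{PROP8}, i.e.\ build a proper regular subspace on the upper half plane by coupling a proper regular subspace of one-dimensional Brownian motion with the absorbed Brownian motion on $(0,\infty)$, then transfer to $U$ via the Riemann mapping and a time change. Your explicit verifications that properness survives both the coupling (via the element $u\otimes g$) and the transformations (via Lemma~\ref{LEMMA2}(1) and bijectivity of the correspondences) are details the paper leaves implicit, and they are sound.
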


The results about reflecting Brownian motion are similar, but the domains should be the Jordan domains in the meaning that they are simply connected and their boundaries are non-self-intersecting continuous loops in the plane. When $U,V$ are Jordan domains the biholomorphic mapping $f:U\rightarrow V$ can be extended to a topological homeomorphism from $\bar{U}$ to $\bar{V}$. If $B^{\bar{U}}$ is the reflecting Brownian motion on $\bar{U}$, then up to a time change, $f(B^{\bar{U}})$ is a reflecting Brownian motion on $\bar{V}$\footnote{See \S5.3($2^\circ$) of  \cite{CF}.}. Hence we have

\begin{proposition}
    If $U,V$ are two Jordan domains of $\mathbf{R}^2$, then the associated Dirichlet forms of reflecting Brownian motions on $\bar{U}$ and $\bar{V}$ are with the same structure of regular subspaces with respect to the corresponding biholomorphic mapping and time change. In particular, the associated Dirichlet form of any reflecting Brownian motion on a Jordan domain always has proper regular subspaces.
\end{proposition}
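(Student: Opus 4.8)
The plan is to deduce both assertions from Theorem~\ref{SPT}, together with the one-dimensional theory of \S\ref{BMC} and the independent-coupling Proposition~\ref{PROP6}.

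First I would treat the correspondence of regular subspaces. By Carath\'eodory's theorem the biholomorphic map $f\colon U\to V$ extends to a homeomorphism $\bar f\colon\bar U\to\bar V$, so $\bar f$ is a spatial homeomorphic transformation in the sense of \S\ref{IUST}; and, as recalled just before the statement (see \S5.3($2^\circ$) of \cite{CF}), $\bar f$ carries the reflecting Brownian motion $B^{\bar U}$, after a time change with full quasi support, to the reflecting Brownian motion $B^{\bar V}$ on $\bar V$. Thus the Dirichlet form of $B^{\bar V}$ on $L^2(\bar V,dx)$ is the image of that of $B^{\bar U}$ on $L^2(\bar U,dx)$ under the composition of a spatial homeomorphic transformation and a time change. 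Theorem~\ref{SPT} provides, for each of these two transformations separately, a structure-preserving bijection between the regular subspaces of the source and of the target form; composing these bijections and inverting yields the asserted correspondence between the regular subspaces of the two reflecting forms.

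Next I would prove the existence of a proper regular subspace. By the Riemann mapping theorem every Jordan domain admits a biholomorphic map onto the open unit square $Q=(0,1)^2$, which is itself a Jordan domain, so by the correspondence just established it suffices to produce one proper regular subspace of the reflecting Brownian motion on $\bar Q$. Since $\bar Q$ is a rectangle, $B^{\bar Q}$ is the independent coupling of two copies of the reflecting Brownian motion on $[0,1]$; hence by Proposition~\ref{PROP6} its Dirichlet form $(\tfrac12\mathbf D,H^1(Q))$ is the independent coupling of $(\tfrac12\mathbf D,H^1((0,1)))$ with itself. By the interval analogue of Theorem~\ref{THMBM} we may choose $s\in\mathbf S([0,1])$ with $|E_s|>0$ (a Cantor-type construction, as in the remark after Theorem~\ref{THMBM}), obtaining a proper regular subspace $(\mathcal E^{(s)},\mathcal F^{(s)})\prec(\tfrac12\mathbf D,H^1((0,1)))$. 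Proposition~\ref{PROP6} then shows that the independent coupling of $(\mathcal E^{(s)},\mathcal F^{(s)})$ with itself is a regular subspace of $(\tfrac12\mathbf D,H^1(Q))$, and it is \emph{proper}: picking $w\in H^1((0,1))\setminus\mathcal F^{(s)}$ and $\phi\in C^{\infty}_\mathrm{c}((0,1))$ with $\phi\not\equiv0$, one has $w\otimes\phi\in H^1(Q)$ while $(w\otimes\phi)(\cdot,y)=\phi(y)\,w\notin\mathcal F^{(s)}$ for every $y$ in the positive-measure set $\{\phi\neq0\}$, so $w\otimes\phi$ does not belong to the domain of the coupled form.

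The only genuinely non-formal ingredient is the conformal invariance of reflecting Brownian motion up to a time change with full quasi support, i.e.\ the fact that the Carath\'eodory extension $\bar f$ intertwines the two reflecting forms exactly through the transformations of \S\ref{IUST}, with the correct behaviour at the boundary; this is supplied by \S5.3($2^\circ$) of \cite{CF} together with Carath\'eodory's extension theorem, and it is here that the Jordan-domain hypothesis enters. Granting it, the remainder is bookkeeping with Theorem~\ref{SPT}, Proposition~\ref{PROP6} and the one-dimensional results, the properness verification above being the only actual computation.
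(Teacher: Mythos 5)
Your proposal is correct and follows essentially the same route as the paper: the Carath\'eodory extension of the Riemann map together with \S5.3($2^\circ$) of \cite{CF} realizes the two reflecting forms as related by a spatial homeomorphism and a time change, so Theorem~\ref{SPT} gives the correspondence, and existence is reduced to a square, where the reflecting Brownian motion is an independent coupling of one-dimensional reflecting Brownian motions and Theorem~\ref{THMBM} with Proposition~\ref{PROP6} produces a proper regular subspace. Your explicit check that $w\otimes\phi$ witnesses properness is a nice concrete addition, though the paper would obtain the same conclusion from Lemma~\ref{LEMMA2}(1) and the core description in Corollary~\ref{COR5}.
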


In the next section, we will prove that the existence of the proper regular subspaces of absorbing or reflecting Brownian motions will {be held} not only for these two dimensional cases but for that on arbitrary domains of $\mathbf{R}^d$.

\subsection{Absorbing and reflecting Brownian motions on arbitrary domains}\label{ARBAD}
Let $\Gamma$ be an arbitrary domain of $\mathbf{R}^d$. Set
\begin{equation}
    \begin{split}
        &H^1(\Gamma):=\{u\in L^2(\Gamma): \int_\Gamma|\nabla u(x)|^2dx<\infty\},\\
        &{\mathbf{D}_\Gamma(u,v)}:=\int_{\Gamma} \nabla u(x)\cdot \nabla v(x)dx,\quad \forall u,v\in H^1(\Gamma),
    \end{split}
\end{equation}
and denote the closure of $C_\mathrm{c}^\infty(\Gamma)$ in $H^1(\Gamma)$ by $H^1_0(\Gamma)$. {By regarding $H_0^1(\Gamma)\subset H_0^1(\mathbf{R}^d)=H^1(\mathbf{R}^d)$, we can write $\mathbf{D}_\Gamma(u,v)=\mathbf{D}(u,v)$ for $u,v\in H^1_0(\Gamma)$.}  It is well known that $(\frac{1}{2}\mathbf{D},H^1_0(\Gamma))$ is a regular Dirichlet form on $L^2(\Gamma)$ with the core $C_\mathrm{c}^\infty(\Gamma)$ and its associated Markov process is just the absorbing Brownian motion $B^\Gamma$ on $\Gamma$.% And $(\frac{1}{2}\mathbf{D},H^1(\Gamma)$ is a regular Dirichlet form on $L^2(\bar{\Gamma})$ whose associated Markov process is the reflecting Brownian motion $B^{\bar{\Gamma}}$ on $\bar{\Gamma}$.

\begin{proposition}\label{ABMAD}
    The associated Dirichlet form $(\frac{1}{2}\mathbf{D},H^1_0(\Gamma))$ on $L^2(\Gamma)$ of the absorbing Brownian motion $B^\Gamma$ on $\Gamma$ always possesses proper regular subspaces.
\end{proposition}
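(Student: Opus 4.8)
The plan is to reduce the statement to the one-dimensional result (Theorem~\ref{THMBM}) by exhibiting a single coordinate direction along which we can ``thin out'' the Dirichlet form, exactly as in the $d$-dimensional full-space case of Theorem~\ref{PROP7}, but now paying attention to the fact that $\Gamma$ need not be a product domain. First I would fix a coordinate, say $x_1$, write points of $\mathbf{R}^d$ as $x=(x_1,\bar x)$ with $\bar x\in\mathbf{R}^{d-1}$, and pick $s\in\mathbf{S}(\mathbf{R})$ with $|E_s|>0$ (for instance $s^{-1}(t)=c(t)+t$ with $c$ the Cantor function, as noted after Theorem~\ref{THMBM}). The candidate regular subspace is
\begin{equation}\label{candidate}
  \mathcal{F}':=\Big\{u\in H^1_0(\Gamma): u(\cdot,\bar x)\ll s \text{ for a.e. }\bar x,\ \tfrac{\partial u}{\partial s}\in L^2(\Gamma)\Big\},\quad
  \mathcal{E}'(u,v):=\frac12\int_\Gamma\Big(\tfrac{\partial u}{\partial s}\tfrac{\partial v}{\partial s}+\sum_{i=2}^d\partial_iu\,\partial_iv\Big)dx.
\end{equation}
In other words, $\mathcal{E}'$ replaces the $x_1$-energy by the ``$s$-energy'' in the first variable and keeps the usual Dirichlet energy in the remaining variables. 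One then checks that on $\mathcal{F}'$ we have $\mathcal{E}'(u,u)=\frac12\mathbf{D}(u,u)$, because $(\partial_su)^2(s'(x_1))^2=(\partial_su)^2 s'(x_1)=(\partial_1u)^2$ a.e.\ exactly as in the computation displayed in \S\ref{BMC}, so that $\mathcal{E}'$ really is the restriction of $\frac12\mathbf{D}$.

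The substantive work is to verify that $(\mathcal{E}',\mathcal{F}')$ is a \emph{regular} Dirichlet form on $L^2(\Gamma)$ and that its core sits inside $H^1_0(\Gamma)$. For the closedness and Markov property I would argue as in Proposition~\ref{PROP6}: $\mathcal{F}'$ is an intersection of the ``one-dimensional'' space $L^2(\mathbf{R}^{d-1};\mathcal{F}^{(s)}_{\text{loc}})$-type condition in the $x_1$-variable with $H^1$-type conditions in the other variables, and completeness follows by the same Fubini-plus-Cauchy argument used there. For regularity the natural core is
\[
  \mathcal{C}':=\big\{(\varphi\circ s)(x_1)\cdot\psi(\bar x):\varphi\in C^1_\mathrm{c}(s(\mathbf{R})),\ \psi\in C^\infty_\mathrm{c}(\mathbf{R}^{d-1})\big\}\cap C_\mathrm{c}(\Gamma),
\]
or rather the linear span of products of such functions that are compactly supported inside $\Gamma$; density in $C_\mathrm{c}(\Gamma)$ with uniform norm comes from Stone--Weierstrass (as in Proposition~\ref{PROP6}), using that $C^1_\mathrm{c}(s)$ separates points of $\mathbf{R}$ by Theorem~\ref{THMBM}(2), and density in $\mathcal{F}'$ in $\mathcal{E}'_1$-norm is a localization/mollification argument in the $\bar x$-variables combined with the one-dimensional approximation theorem for $\mathcal{F}^{(s)}$. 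Finally $\mathcal{C}'\subset H^1_0(\Gamma)$ because each generator $\varphi\circ s$ lies in $H^1(\mathbf{R})$ (Theorem~\ref{THMBM}(3)) and has the same compact support, so products lie in $C_\mathrm{c}^{0,1}\cap H^1$ supported in $\Gamma$, hence in $H^1_0(\Gamma)$; taking $\mathcal{E}_1$-closures gives $\mathcal{F}'\subset H^1_0(\Gamma)$.

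Properness is the easy part once regularity is in hand: since $|E_s|>0$, there is a genuine one-dimensional function $w\in H^1(\mathbf{R})$ with $w\notin\mathcal{F}^{(s)}$, and by localizing $w(x_1)\eta(\bar x)$ to a small box $\subset\Gamma$ one produces an element of $H^1_0(\Gamma)\setminus\mathcal{F}'$, exactly as in Theorem~\ref{THMBM}(3); alternatively invoke Theorem~\ref{THM3} together with the Lévy-type corollary, observing that any regular subspace whose $x_1$-marginal energy agreed with $\frac12\int(\partial_1u)^2$ would force $s$ to be the identity scale. I expect the genuine obstacle to be the regularity proof, and specifically the $\mathcal{E}'_1$-density of $\mathcal{C}'$ in $\mathcal{F}'$ on a general (possibly very irregular) domain $\Gamma$: one must approximate an arbitrary $u\in\mathcal{F}'$ by compactly-supported-in-$\Gamma$ functions of product type without destroying the $\partial_su$ control, which requires a partition-of-unity/mollification scheme that mollifies only in the $\bar x$ directions and uses the one-dimensional density theorem for $\mathcal{F}^{(s)}$ in the $x_1$ direction; the interplay of these two, together with the constraint ``supported in $\Gamma$,'' is where care is needed. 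A cleaner route, if available, is to realize $(\frac12\mathbf{D},H^1_0(\Gamma))$ itself as obtained from the full-space $d$-dimensional Brownian motion by the probabilistic transformations of \S\ref{IUST} (here: killing upon exiting $\Gamma$, i.e.\ a part process) and then quote Theorem~\ref{SPT} to pull back the proper regular subspace $(\mathcal{E}^{s,x_2,\dots,x_d},\mathcal{F}^{s,x_2,\dots,x_d})$ of $(\frac12\mathbf{D},H^1(\mathbf{R}^d))$ from Theorem~\ref{PROP7}; the only thing to check is that ``killing on leaving $\Gamma$'' falls under the killing transformation of \S\ref{KRT} applied to the smooth measure that is the part-process construction, which Theorem~\ref{SPT} then handles automatically.
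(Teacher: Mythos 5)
Your overall strategy (degenerate the scale in one coordinate, then localize to $\Gamma$) is the same as the paper's, but as written there are two genuine gaps. First, in your main route you define $\mathcal{F}'$ as a \emph{maximal} domain inside $H^1_0(\Gamma)$ and then must prove that the product core $\mathcal{C}'$ is $\mathcal{E}'_1$-dense in it; you correctly identify this as ``the genuine obstacle'' but do not carry it out, and for an arbitrary (possibly very irregular) $\Gamma$ this is not a routine mollification argument --- there is no reason a priori that every $u\in H^1_0(\Gamma)$ with $u(\cdot,\bar x)\ll s$ and $\partial_s u\in L^2$ can be approximated by compactly-supported-in-$\Gamma$ product functions in the $\mathcal{E}'_1$-norm. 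The paper avoids this entirely by \emph{not} defining the domain maximally: it takes the proper regular subspace $(\mathcal{E}^{s_1,\dots,s_d},\mathcal{F}^{s_1,\dots,s_d})\prec(\frac12\mathbf{D},H^1(\mathbf{R}^d))$ already furnished by Theorem~\ref{PROP7}, forms its \emph{part Dirichlet form} on the open set $\Gamma$ as in \eqref{EFGM}, and quotes the standard fact that the part of a regular Dirichlet form on an open set is regular with core $\{u\in\mathcal{C}:\mathrm{supp}[u]\subset\Gamma\}$; the inclusion $(\mathcal{E}_\Gamma,\mathcal{F}_\Gamma)\prec(\frac12\mathbf{D},H^1_0(\Gamma))$ is then immediate because both are parts of nested forms.

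Second, your proposed ``cleaner route'' via Theorem~\ref{SPT} does not work: killing upon exiting $\Gamma$ (the part process) is \emph{not} the killing transformation of \S\ref{KRT}, which perturbs by a positive Radon measure $k$ at a finite rate ($\mathcal{E}^k=\mathcal{E}+(\cdot,\cdot)_k$ on $\mathcal{F}\cap L^2(k)$, same state space and same reference measure). No Radon smooth measure produces instantaneous killing at $\partial\Gamma$, so Theorem~\ref{SPT} cannot be quoted to transport regular subspaces across the part-process construction. This matters because properness is then genuinely not automatic: passing to parts on $\Gamma$ of two nested forms could in principle collapse the strict inclusion. The paper handles this with a concrete argument you would need to supply: choose a rectangle $G=I_1\times\cdots\times I_d\subset\Gamma$ with $|E_{s_i}^{I_i}|>0$ for some $i$, note that the part form on $G$ is a proper subspace of $H^1_0(G)$, and then show that $\mathcal{F}_\Gamma=H^1_0(\Gamma)$ would force $H^1_0(G)\subset\mathcal{F}_G$ by multiplying an approximating sequence for $f\in C^\infty_{\mathrm{c}}(G)$ by a cutoff $\varphi\in\mathcal{F}_G\cap C_{\mathrm{c}}(G)$ equal to $1$ on $\mathrm{supp}[f]$ --- a contradiction. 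Your localization of $w(x_1)\eta(\bar x)$ would serve the same purpose for the maximal domain, but only once the (unproven) regularity of that maximal domain is settled.
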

\begin{proof}
    Clearly, there exists a rectangle $G=I_1\times \cdots I_d\subset \Gamma$ where $I_i$ is an open interval of $\mathbf{R}$ for any $1\leq i\leq d$. Choose $s_1,\cdots s_d\in \mathbf{S}(\mathbf{R})$ such that for some $1\leq i\leq d$, the Lebesgue measure of the set
    \[
        E_{s_i}^{I_i}:=\{x\in I_i:s'_i(x)=0\}
    \]
    is positive. Define
    \begin{equation}\label{MCC}
        \mathcal{C}:=C_\mathrm{c}^\infty(s_1)\otimes \cdots \otimes C_\mathrm{c}^\infty(s_d)
    \end{equation}
    where $C_\mathrm{c}^\infty(s_i)=\{{\phi\circ s_i:} \phi\in C_\mathrm{c}^\infty(J_i)\text{ where }J_i=s_i(\mathbf{R})\}$. Clearly, $\mathcal{C}$ is closable in $(\frac{1}{2}\mathbf{D},H^1(\mathbf{R}^d))$\footnote{Note that $\mathbf{D}$ is the corresponding integral on $\mathbf{R}^d$.} and its closure $(\mathcal{E},\mathcal{F}):=(\mathcal{E}^{s_1,\cdots,s_d},\mathcal{F}^{s_1,\cdots, s_d})$ is the corresponding regular Dirichlet form of $X=(X^{s_1},\cdots,X^{s_d})$ which is defined in the notes before \text{Theorem \ref{PROP7}}. Since $|E_{s_i}^{I_i}|>0$ for some $i$, $X$ is different to $B$ and $(\mathcal{E},\mathcal{F})$ is a proper regular subspace of $(\frac{1}{2}\mathbf{D},H^1(\mathbf{R}^d))$.

    As usual, the part Dirichlet form $(\mathcal{E}_\Gamma,\mathcal{F}_\Gamma)$ of $(\mathcal{E},\mathcal{F})$ on open set $\Gamma$ can be written as:
    \begin{equation}\label{EFGM}
\begin{split}
        \mathcal{F}_\Gamma&=\{u\in\mathcal{F}:\tilde{u}=0\text{ q.e. on }\Gamma^c\}\\
        \mathcal{E}_\Gamma(u,v)&=\mathcal{E}(u,v),\quad  u,v\in \mathcal{F}_\Gamma.
    \end{split}
\end{equation}
    It is well known that $(\mathcal{E}_\Gamma,\mathcal{F}_\Gamma)$ is a regular Dirichlet form on $L^2(\Gamma)$ with the core
    \[
        \mathcal{C}_\Gamma:=\{u\in \mathcal{C}:\text{supp}[u]\subset \Gamma\}.
    \]
    On the other hand, $(\frac{1}{2}\mathbf{D},H^1_0(\Gamma))$ is just the part Dirichlet form of $(\frac{1}{2}\mathbf{D},H^1(\mathbf{R}^d))$ on $\Gamma$. Hence it is easy to check that
    \[
        (\mathcal{E}_\Gamma,\mathcal{F}_\Gamma)\prec (\frac{1}{2}\mathbf{D},H^1_0(\Gamma)).
    \]
    Note that the part Dirichlet form $(\mathcal{E}_G,\mathcal{F}_G)$ of $(\mathcal{E},\mathcal{F})$ on $G$ is a regular Dirichlet form with the core
    \[
        \mathcal{C}_G:=\{u\in \mathcal{C}:\text{supp}[u]\subset G\}.
    \]
    On the other hand,
    \[
        \mathcal{C}_G=C_\mathrm{c}^\infty(s_i|_{I_i})\otimes\cdots \otimes C_\mathrm{c}^\infty(s_d|_{I_d})
        \]
    where $C_\mathrm{c}^\infty(s_i|_{I_i})=\{\phi(s_i|_{I_i}): \phi\in C_\mathrm{c}^\infty(J'_i)\text{ where }J'_i=s_i(I_i)\}$. As $|E_{s_i}^{I_i}|>0$ for some $i$, $(\mathcal{E}_G,\mathcal{F}_G)$ is a proper regular subspace of $(\frac{1}{2}\mathbf{D},H^1_0(G))$.

    Now we are going to prove that $\mathcal{F}_\Gamma\varsubsetneqq H^1_0(\Gamma)$. If not, for any $f\in C_\mathrm{c}^\infty(G)\subset H^1_0(\Gamma)=\mathcal{F}_\Gamma$, there exists $\{u_n\}\subset \mathcal{C}_\Gamma$ such that $u_n\rightarrow f$ in the norm $||\cdot ||_{H^1(\Gamma)}$. Let $K=\text{supp}[f]\subset G$. Clearly $K$ is compact. Then there exists a $\varphi\in \mathcal{F}_G\cap C_\mathrm{c}(G)$ such that $\varphi|_K\equiv 1$. Let $v_n:=u_n\cdot \varphi\in \mathcal{F}_G$, it is easy to prove that $||v_n-f||_{H^1(G)}\rightarrow 0$ as $n\rightarrow \infty$. Hence $H^1_0(G)\subset \mathcal{F}_G$ which contradicts that $\mathcal{F}_G\subsetneqq H^1_0(G)$.
\end{proof}

Usually $(\frac{1}{2}{\mathbf{D}_\Gamma}, H^1(\Gamma))$ is a Dirichlet form on $L^2(\Gamma)$ but not regular. However under some conditions, for example, when the domain $\Gamma$ has \textbf{continuous boundary} in the following sense: any $x\in \partial D$ has a neighborhood $U$ such that 
\[
	\Gamma\cap U=\{(x_1,\cdots,x_d):x_d>F(x_1,\cdots, x_{d-1})\}\cap U
\] 
in some coordinate $(x_1,\cdots, x_d)$ and with a continuous function $F$, then 
\[
	C_\mathrm{c}^\infty(\bar{\Gamma}):=\{u|_{\bar{\Gamma}}:u\in C_\mathrm{c}^\infty(\mathbf{R}^d)\}
\]
is dense in $H^1(\Gamma)$ where $u|_{\bar{\Gamma}} $ is the restriction of $u$ on $\bar{\Gamma}$. Hence $(\frac{1}{2}{\mathbf{D}_\Gamma},H^1(\Gamma))$ is a regular Dirichlet form on $L^2(\bar{\Gamma})$, where $L^2(\bar{\Gamma}):=L^2(\bar{\Gamma},1_{\bar{\Gamma}}(x)\cdot dx)$. Its corresponding Markov process is just the reflecting Browninan motion $B^{\bar{\Gamma}}$ on $\bar{\Gamma}$.

\begin{proposition}
	Let $\Gamma$ be a domain of $\mathbf{R}^d$ with continuous boundary. Then the associated Dirichlet form $(\frac{1}{2}{\mathbf{D}_\Gamma},H^1(\Gamma))$ on $L^2(\bar{\Gamma})$ of the reflecting Brownian motion $B^{\bar{\Gamma}}$  on $\bar{\Gamma}$ always possesses proper regular subspaces.
\end{proposition}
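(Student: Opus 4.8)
The plan is to transcribe the proof of Proposition~\ref{ABMAD}, replacing $H^1_0$ by $H^1$ and $C_\mathrm{c}^\infty(\Gamma)$ by its reflecting counterpart throughout. First I would fix an \emph{open} rectangle $G=I_1\times\cdots\times I_d$ with $\bar G\subset\Gamma$ (possible since $\Gamma$ is a non-empty open set), together with $s_1,\cdots,s_d\in\mathbf{S}(\mathbf{R})$ such that $|E_{s_i}^{I_i}|>0$ for at least one index $i$, where $E_{s_i}^{I_i}=\{x\in I_i:s_i'(x)=0\}$; for example one may take $s_i$ to be the identity outside a compact subinterval of $I_i$ and, inside that subinterval, a suitably rescaled inverse of $x\mapsto c(x)+x$ with $c$ the Cantor function. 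Put $\mathcal{C}:=C_\mathrm{c}^\infty(s_1)\otimes\cdots\otimes C_\mathrm{c}^\infty(s_d)$ as in~(\ref{MCC}); since $(\mathcal{E}^{s_1,\cdots,s_d},\mathcal{F}^{s_1,\cdots,s_d})\prec(\frac{1}{2}\mathbf{D},H^1(\mathbf{R}^d))$ by Theorem~\ref{PROP7}, every element of $\mathcal{C}$ is a continuous, compactly supported function on $\mathbf{R}^d$ belonging to $H^1(\mathbf{R}^d)$, so its restriction to $\bar\Gamma$ lies in $H^1(\Gamma)\cap C_\mathrm{c}(\bar\Gamma)$. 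Set $\mathcal{C}^{\bar\Gamma}:=\{w|_{\bar\Gamma}:w\in\mathcal{C}\}$ and let $(\mathcal{E}',\mathcal{F}')$ be the closure of $\mathcal{C}^{\bar\Gamma}$ in $(\frac{1}{2}\mathbf{D}_\Gamma,H^1(\Gamma))$.

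The next step is to check that $(\mathcal{E}',\mathcal{F}')$ is a regular Dirichlet form on $L^2(\bar\Gamma)$ and a regular subspace of $(\frac{1}{2}\mathbf{D}_\Gamma,H^1(\Gamma))$. Density of $\mathcal{C}^{\bar\Gamma}$ in $C_\mathrm{c}(\bar\Gamma)$ for the uniform norm is the Stone--Weierstrass argument already used in the proof of Proposition~\ref{PROP6}: $\mathcal{C}^{\bar\Gamma}$ is a subalgebra that vanishes nowhere and separates points, each $s_i$ being strictly increasing. Closability is automatic because $\mathcal{C}^{\bar\Gamma}$ is a subspace of the domain of the closed form $(\frac{1}{2}\mathbf{D}_\Gamma,H^1(\Gamma))$; hence $\mathcal{F}'\subset H^1(\Gamma)$, $\mathcal{E}'=\frac{1}{2}\mathbf{D}_\Gamma$ on $\mathcal{F}'$, and $\mathcal{C}^{\bar\Gamma}$ is a core. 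For the Markov property I would pass through $\mathbf{R}^d$: by Corollary~\ref{COR5} (together with Theorem~\ref{PROP7}) the closure $(\mathcal{E}^{s_1,\cdots,s_d},\mathcal{F}^{s_1,\cdots,s_d})$ of $\mathcal{C}$ is a regular Dirichlet form with $\mathcal{E}^{s_1,\cdots,s_d}=\frac{1}{2}\mathbf{D}$ on its domain; given $u=w|_{\bar\Gamma}\in\mathcal{C}^{\bar\Gamma}$ with $w\in\mathcal{C}$, the unit contraction $(0\vee w)\wedge1$ lies in $\mathcal{F}^{s_1,\cdots,s_d}$ and is an $\mathcal{E}_1^{s_1,\cdots,s_d}$-limit of elements $w_n\in\mathcal{C}$. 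Since $|\partial\Gamma|=0$, the restriction map $H^1(\mathbf{R}^d)\to H^1(\Gamma)$ is a contraction for the $\|\cdot\|_{\mathcal{E}_1}$-norms, so $(0\vee u)\wedge1=\lim_n w_n|_{\bar\Gamma}\in\mathcal{F}'$ with $\mathcal{E}'\big((0\vee u)\wedge1\big)\le\mathcal{E}'(u)$, and the usual Banach--Saks-type approximation then lifts the Markov property from the core $\mathcal{C}^{\bar\Gamma}$ to all of $\mathcal{F}'$. Thus $(\mathcal{E}',\mathcal{F}')\prec(\frac{1}{2}\mathbf{D}_\Gamma,H^1(\Gamma))$.

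For properness I would argue exactly as at the end of the proof of Proposition~\ref{ABMAD}. Because $\bar G\subset\Gamma$, the part form of $(\frac{1}{2}\mathbf{D}_\Gamma,H^1(\Gamma))$ on $G$ is $(\frac{1}{2}\mathbf{D},H^1_0(G))$, and the part form of $(\mathcal{E}',\mathcal{F}')$ on $G$ is the part form $(\mathcal{E}_G,\mathcal{F}_G)$ of $(\mathcal{E}^{s_1,\cdots,s_d},\mathcal{F}^{s_1,\cdots,s_d})$ on $G$ studied there, with core $\mathcal{C}_G=C_\mathrm{c}^\infty(s_1|_{I_1})\otimes\cdots\otimes C_\mathrm{c}^\infty(s_d|_{I_d})$; since $|E_{s_i}^{I_i}|>0$ for some $i$, this $(\mathcal{E}_G,\mathcal{F}_G)$ is a \emph{proper} regular subspace of $(\frac{1}{2}\mathbf{D},H^1_0(G))$. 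Now suppose $\mathcal{F}'=H^1(\Gamma)$. Then any $f\in C_\mathrm{c}^\infty(G)\subset H^1(\Gamma)=\mathcal{F}'$ is an $\mathcal{E}_1'$-limit of functions $u_n\in\mathcal{C}^{\bar\Gamma}$, say $u_n=w_n|_{\bar\Gamma}$ with $w_n\in\mathcal{C}$; choosing a cut-off $\varphi\in\mathcal{C}_G$ with $\varphi\equiv1$ on $\text{supp}[f]$ (possible, with $\nabla\varphi$ bounded), the products $v_n:=u_n\varphi$ coincide with $w_n\varphi\in\mathcal{F}_G$ and converge to $f$ in $H^1(G)$, so $C_\mathrm{c}^\infty(G)$, hence $H^1_0(G)$, is contained in $\mathcal{F}_G$, contradicting $\mathcal{F}_G\subsetneqq H^1_0(G)$. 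Therefore $\mathcal{F}'\subsetneqq H^1(\Gamma)$ and $(\mathcal{E}',\mathcal{F}')$ is a proper regular subspace.

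I expect the main obstacle to be the verification that $(\mathcal{E}',\mathcal{F}')$ is genuinely a Dirichlet form --- that is, the Markov property, since $\mathcal{C}^{\bar\Gamma}$ is not literally stable under normal contractions --- together with the clean identification of the two part forms on $G$. Both of these become transparent precisely because we insist that $\bar G\subset\Gamma$, so that nothing near $\partial\Gamma$ interacts with the rectangle and the boundary behaviour of the reflecting form never enters; apart from this, everything is a routine transcription of the absorbing case treated in Proposition~\ref{ABMAD}.
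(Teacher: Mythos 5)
Your proposal is correct and follows essentially the same route as the paper: restrict the tensor-product core $\mathcal{C}$ to $\bar{\Gamma}$, take the closure in $(\frac{1}{2}\mathbf{D}_\Gamma,H^1(\Gamma))$, and prove properness by localizing to the rectangle $G$ and invoking the one-dimensional scale functions with $|E_{s_i}^{I_i}|>0$. The only (harmless) divergence is at the properness step, where the paper simply restricts the approximating sequence to $\bar{G}$ and contradicts the properness of the reflecting subspace $(\mathcal{E}_{\bar{G}},\mathcal{F}_{\bar{G}})$ in $H^1(G)$ without any cutoff, whereas you multiply by a cutoff supported in $G$ and contradict the properness of the absorbing part form in $H^1_0(G)$; both arguments are valid.
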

\begin{proof}
	Similarly as in the proof of \text{Proposition \ref{ABMAD}}, choose an open rectangle $G=I_1\times \cdots I_d\subset \Gamma$ and $s_1,\cdots s_d\in \mathbf{S}(\mathbf{R})$ such that for some $1\leq i\leq d$, the Lebesgue measure of the set $ E_{s_i}^{I_i}$ is positive. We also define $\mathcal{C}$ as (\ref{MCC}). Define:
\[
	\mathcal{C}(\bar{\Gamma}):=\{u|_{\bar{\Gamma}}:u\in \mathcal{C}\}.
\]
Since $\mathcal{C}$ is dense in $C_\mathrm{c}(\mathbf{R}^d)$, it also follows that $\mathcal{C}(\bar{\Gamma})$ is dense in $C_\mathrm{c}(\bar{\Gamma}):=\{u|_{\bar{\Gamma}}:u\in C_\mathrm{c}(\mathbf{R}^d)\}$. It is easy to check that $(\frac{1}{2}{\mathbf{D}_\Gamma},\mathcal{C}(\bar{\Gamma}))$ is closable on $L^2(\bar{\Gamma})$. {Its closure denoted by} $(\mathcal{E}_{\bar{\Gamma}},\mathcal{F}_{\bar{\Gamma}})$, i.e. $\mathcal{F}_{\bar{\Gamma}}=\overline{\mathcal{C}(\bar{\Gamma})}^{H^1(\Gamma)}, \mathcal{E}_{\bar{\Gamma}}(u,v)=\frac{1}{2}{\mathbf{D}_\Gamma}(u,v)$ for any $u,v\in \mathcal{F}_{\bar{\Gamma}}$,  is a regular Dirichlet form on $L^2(\bar{\Gamma})$. Clearly,
\begin{equation}\label{EFBGM}
	(\mathcal{E}_{\bar{\Gamma}},\mathcal{F}_{\bar{\Gamma}})\prec (\frac{1}{2}{\mathbf{D}_\Gamma},H^1(\Gamma)).
\end{equation}
Note that $\mathcal{C}(\bar{G}):=\{u|_{\bar{G}}:u\in \mathcal{C}\}=C_\mathrm{c}^\infty(s_i|_{\bar{I}_i})\otimes\cdots \otimes C_\mathrm{c}^\infty(s_d|_{\bar{I}_d})$
    where $C_\mathrm{c}^\infty(s_i|_{\bar{I}_i})=\{\phi(s_i|_{\bar{I}_i}): \phi\in C_\mathrm{c}^\infty(\bar{J}'_i)\text{ where }\bar{J}'_i=s_i(\bar{I}_i)\}$. Clearly $G$ has the continuous boundary. It follows from $|E_{s_i}^{I_i}|>0$ for some $i$ and \text{Corollary \ref{COR5}} that the closure $(\mathcal{E}_{\bar{G}},\mathcal{F}_{\bar{G}})$ of $\mathcal{C}(\bar{G})$ is a proper regular subspace of $(\frac{1}{2}{\mathbf{D}_G},H^1(G))$ on $L^2(\bar{G})$.

Now we are going to prove $\mathcal{F}_{\bar{\Gamma}}\subsetneqq H^1(\Gamma)$. If $\mathcal{F}_{\bar{\Gamma}}=H^1(\Gamma)$, for any $f\in C_\mathrm{c}^\infty(\mathbf{R}^d)$, $f|_{\bar{\Gamma}}\in H^1(\Gamma)$, there exists $\{u_n\}\subset \mathcal{C}(\bar{\Gamma})$ such that $u_n\rightarrow f|_{\bar{\Gamma}}$ in $H^1(\Gamma)$. Hence it also holds that $u_n|_{\bar{G}}\rightarrow f|_{\bar{G}}$ in $H^1(G)$. It follows from $u_n|_{\bar{G}}\in \mathcal{C}(\bar{G})\subset \mathcal{F}_{\bar{G}}$ that $C_\mathrm{c}^\infty(\bar{G})\subset   \mathcal{F}_{\bar{G}}$ and hence $\mathcal{F}_{\bar{G}}=H^1(G)$ which contradicts that $(\mathcal{E}_{\bar{G}},\mathcal{F}_{\bar{G}})$ is a proper regular subspace of $(\frac{1}{2}{\mathbf{D}_G}, H^1(G))$ on $L^2(\bar{G})$.
\end{proof}

%\begin{remark}
%	It is well known that the active reflected Dirichlet form of $(\frac{1}{2}\mathbf{D},H^1_0(\Gamma))$ is exactly $(\frac{1}{2}{\mathbf{D}_\Gamma}, H^1(\Gamma))$. The proper regular subspace $(\mathcal{E}_\Gamma,\mathcal{F}_\Gamma)$ of $ (\frac{1}{2}\mathbf{D},H^1_0(\Gamma))$ and the proper regular subspace $(\mathcal{E}_{\bar{\Gamma}},\mathcal{F}_{\bar{\Gamma}})$ of $ (\frac{1}{2}{\mathbf{D}_\Gamma},H^1(\Gamma))$ appeared in (\ref{EFGM}) and (\ref{EFBGM}) were introduced by the same class $\mathcal{C}$ defined by (\ref{MCC}). Let $(\mathcal{E}_\Gamma^{\mathrm{ref}},(\mathcal{F}_\Gamma^{\mathrm{ref}})_\mathrm{a})$ be the active reflected Dirichlet form of $(\mathcal{E}_\Gamma,\mathcal{F}_\Gamma)$. It holds that 
%\[\begin{split}
	%&\mathcal{F}_{\bar{\Gamma}}\subset (\mathcal{F}_\Gamma^{\mathrm{ref}})_\mathrm{a} \subsetneqq H^1(\Gamma), \\
%	&\mathcal{E}_\Gamma^{\mathrm{ref}}(u,v)=\frac{1}{2}{\mathbf{D}_\Gamma}(u,v),\quad { u,v\in (\mathcal{F}_\Gamma^{\mathrm{ref}})_\mathrm{a},}
%\end{split}\]
%which is followed by (4.3.13), \text{Theorem 6.2.13} and \text{Theorem 6.4.2} of  \cite{CF}.
%\end{remark}

\subsection{Symmetric uniformly elliptic diffusions}

Let $\Gamma$ be an arbitrary domain of $\mathbf{R}^d$. In this section we pay special attention to the Markov symmetric form on $L^2(\Gamma)$:
\begin{equation}
	\begin{split}
        \mathcal{D}[\mathcal{E}]&:=C_\mathrm{c}^\infty (\Gamma),  \\
		\mathcal{E}(u,v)&:=\sum_{i,j=1}^d\int_\Gamma a_{ij}(x)\frac{\partial u(x)}{\partial x_i}\frac{\partial v(x)}{\partial x_j}dx,
	\end{split}
\end{equation}
where $a_{ij}$ are locally bounded Borel  functions on $\Gamma$ for any $1\leq i,j \leq d$ such that for any $\xi\in \mathbf{R}^d, x\in \Gamma$
\begin{equation}\label{AC1}
	\sum_{i,j=1}^d a_{ij}(x)\xi_i \xi_j\geq 0,\quad a_{ij}(x)=a_{ji}(x),\;1\leq i,j\leq d.
\end{equation}
Moreover, $(a_{ij})_{d\times d}$ is said to satisfy {the \emph{uniform ellipticity} if there exists $\delta>0$ such that}
\begin{equation}\label{AC2}
	\sum_{i,j=1}^d a_{ij}(x)\xi_i\xi_j \geq \delta |\xi|^2,\quad  \xi\in \mathbf{R}^d, x\in \Gamma.
\end{equation}
If $(a_{ij})$ satisfies the uniform ellipticity, then $(\mathcal{E},\mathcal{D}[\mathcal{E}])$ is closable. The closure of $\mathcal{D}[\mathcal{E}]$ is denoted by $\mathcal{F}$. Clearly $(\mathcal{E},\mathcal{F})$ is a regular Dirichlet form on $L^2(\Gamma)$. For more details, see \S3.1 of  \cite{FU}.

\begin{proposition}\label{UED}
	Let $(\mathcal{E},\mathcal{F})$ be above where $(a_{ij})$ are locally bounded Borel functions on $\Gamma$ and satisfy (\ref{AC1}) and (\ref{AC2}).  For any proper $(\frac{1}{2}\mathbf{D},\mathcal{G})\prec (\frac{1}{2}\mathbf{D}, H^1_0(\Gamma))$ with a core $\mathcal{C}$, it holds that $(\mathcal{E},\mathcal{C})$ is closable on $L^2(\Gamma)$ and its closure $(\mathcal{E},\mathcal{F}^{\mathcal{C}})$ is a proper regular subspace of $(\mathcal{E},\mathcal{F})$. In particular,  $(\mathcal{E},\mathcal{F})$ always possesses proper regular subspaces. 
\end{proposition}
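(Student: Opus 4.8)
The plan is to use the uniform ellipticity \eqref{AC2} to compare the form $\mathcal{E}$ with $\frac12\mathbf{D}$ on functions whose support is a fixed compact subset of $\Gamma$, and then to transport the regular–subspace structure of $(\frac12\mathbf{D},H^1_0(\Gamma))$ to $(\mathcal{E},\mathcal{F})$ through the given core $\mathcal{C}$.

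First I would record two elementary consequences of \eqref{AC1}, \eqref{AC2} and the local boundedness of the $a_{ij}$. On one hand, for $w\in H^1_0(\Gamma)$ one has $\sum_{i,j}\int_\Gamma a_{ij}\,\partial_i w\,\partial_j w\,dx\ge \delta\int_\Gamma|\nabla w|^2\,dx$, so that $\mathcal{E}_1(w,w)\ge c_0\,(\tfrac12\mathbf{D})_1(w,w)$ with $c_0:=\min(2\delta,1)>0$ whenever the left-hand side is finite. On the other hand, if $w$ has compact support contained in the open set $\Gamma$, mollification produces $w_n\in C_\mathrm{c}^\infty(\Gamma)$ with supports in a fixed compact neighbourhood $K'$ of $\mathrm{supp}[w]$ and $w_n\to w$ in $H^1$; since the $a_{ij}$ are bounded on $K'$, the sequence $\{w_n\}$ is $\mathcal{E}_1$-Cauchy, whence $w\in\mathcal{F}$ and $\mathcal{E}(w,w)=\sum_{i,j}\int_\Gamma a_{ij}\,\partial_i w\,\partial_j w\,dx$. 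As every element of $\mathcal{C}\subset\mathcal{G}\cap C_\mathrm{c}(\Gamma)\subset H^1_0(\Gamma)$ has such a support, this gives $\mathcal{C}\subset\mathcal{F}\cap C_\mathrm{c}(\Gamma)$, and $\mathcal{E}$ restricted to $\mathcal{C}$ is the Dirichlet integral with coefficients $(a_{ij})$; in particular $\mathcal{E}_1\ge c_0(\tfrac12\mathbf{D})_1$ holds on $\mathcal{C}$.

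Consequently $(\mathcal{E},\mathcal{C})$ is the restriction of the closed form $(\mathcal{E},\mathcal{F})$ to the linear subspace $\mathcal{C}$, hence closable, with closure $(\mathcal{E},\mathcal{F}^{\mathcal{C}})$, where $\mathcal{F}^{\mathcal{C}}:=\overline{\mathcal{C}}^{\mathcal{E}_1}\subset\mathcal{F}$. To identify $(\mathcal{E},\mathcal{F}^{\mathcal{C}})$ as a regular subspace of $(\mathcal{E},\mathcal{F})$ it remains to verify the Markov property and regularity; regularity is then immediate, since $\mathcal{C}\subset\mathcal{F}^{\mathcal{C}}\cap C_\mathrm{c}(\Gamma)$ is $\mathcal{E}_1$-dense in $\mathcal{F}^{\mathcal{C}}$ by construction and, being a core of the regular Dirichlet form $(\tfrac12\mathbf{D},\mathcal{G})$ on $L^2(\Gamma)$, is uniformly dense in $C_\mathrm{c}(\Gamma)$. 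The Markov property — stability of $\mathcal{F}^{\mathcal{C}}$ under the unit contraction — is the one point that requires care, because normal contractions are not $\mathcal{E}_1$-continuous in general. I would take $\mathcal{C}$ to be a special standard core of $(\tfrac12\mathbf{D},\mathcal{G})$ (such cores exist, see \S1.1 of \cite{FU}), so that for $u\in\mathcal{C}$ the smooth normal contractions $\varphi_\varepsilon\circ u$, with $\varphi_\varepsilon(0)=0$, $0\le\varphi_\varepsilon'\le1$ and $\varphi_\varepsilon$ approximating $t\mapsto(0\vee t)\wedge1$, lie in $\mathcal{C}$; since $\varphi_\varepsilon\circ u\to(0\vee u)\wedge1$ in $H^1$ with supports in the fixed compact $\mathrm{supp}[u]$, the local comparison above gives $\varphi_\varepsilon\circ u\to(0\vee u)\wedge1$ in $\mathcal{E}_1$, so $(0\vee u)\wedge1\in\mathcal{F}^{\mathcal{C}}$, and a Banach--Saks argument together with the convexity and $\mathcal{E}_1$-lower semicontinuity of $\mathcal{E}$ extends this stability from $\mathcal{C}$ to all of $\mathcal{F}^{\mathcal{C}}$. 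Hence $(\mathcal{E},\mathcal{F}^{\mathcal{C}})\prec(\mathcal{E},\mathcal{F})$.

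For properness I would argue by contradiction: if $\mathcal{F}^{\mathcal{C}}=\mathcal{F}$, then $C_\mathrm{c}^\infty(\Gamma)\subset\mathcal{F}=\mathcal{F}^{\mathcal{C}}=\overline{\mathcal{C}}^{\mathcal{E}_1}$. But $\mathcal{E}_1\ge c_0(\tfrac12\mathbf{D})_1$ on $\mathcal{C}$ forces every $\mathcal{E}_1$-Cauchy sequence in $\mathcal{C}$ to be $(\tfrac12\mathbf{D})_1$-Cauchy, so $\overline{\mathcal{C}}^{\mathcal{E}_1}\subset\overline{\mathcal{C}}^{(\frac12\mathbf{D})_1}=\mathcal{G}$; thus $C_\mathrm{c}^\infty(\Gamma)\subset\mathcal{G}$, and since $\mathcal{G}$ is $(\tfrac12\mathbf{D})_1$-closed while $H^1_0(\Gamma)=\overline{C_\mathrm{c}^\infty(\Gamma)}^{(\frac12\mathbf{D})_1}$, we obtain $\mathcal{G}=H^1_0(\Gamma)$, contradicting that $(\tfrac12\mathbf{D},\mathcal{G})$ is a proper subspace. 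Therefore $\mathcal{F}^{\mathcal{C}}\subsetneq\mathcal{F}$. Finally the last assertion follows by applying what precedes to a proper regular subspace of $(\tfrac12\mathbf{D},H^1_0(\Gamma))$, which exists by Proposition \ref{ABMAD}, together with one of its cores. I expect the verification of the Markov property of $\mathcal{F}^{\mathcal{C}}$ to be the only genuinely delicate step; everything else is a direct application of the two-sided local comparison afforded by uniform ellipticity.
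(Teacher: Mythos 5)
Your proof is correct and follows essentially the same route as the paper: mollification together with the local boundedness of $(a_{ij})$ to get $\mathcal{C}\subset\mathcal{F}$, closability of $(\mathcal{E},\mathcal{C})$ as the restriction of the closed form $(\mathcal{E},\mathcal{F})$, and the lower bound $\mathcal{E}_1\ge c_0(\tfrac12\mathbf{D})_1$ from uniform ellipticity to force $C_\mathrm{c}^\infty(\Gamma)\subset\mathcal{G}$ and hence a contradiction if the subspace were not proper. The only difference is that you explicitly verify the Markov property of the closure (by working with a special standard core and a Banach--Saks argument), a point the paper's proof passes over with ``clearly''; this is a legitimate refinement of the same argument rather than a different approach.
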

\begin{proof}
	We need only to  prove $\mathcal{C}\subset \mathcal{F}$. In fact, if we have proved $\mathcal{C}\subset \mathcal{F}$, then clearly $(\mathcal{E},\mathcal{C})$ is closable and its closure $(\mathcal{E},\mathcal{F}^{\mathcal{C}})$ is a regular Dirichlet form with $\mathcal{F}^{\mathcal{C}}\subset \mathcal{F}$.  If $(\mathcal{E},\mathcal{F}^{\mathcal{C}})$ is not a proper subspace, in other words, $\mathcal{F}^{\mathcal{C}}=\mathcal{F}$, then for any $u\in C_\mathrm{c}^\infty(\Gamma)$ there exists $\{u_n\}\subset \mathcal{C}$ such that $u_n\rightarrow u$ in the norm $||\cdot ||_{\mathcal{E}_1}$.  It follows (\ref{AC2}) that $u_n\rightarrow u$ also in $H^1(\Gamma)$, hence $C_\mathrm{c}^\infty(\Gamma)\subset \mathcal{G}$ which contradits that  $(\frac{1}{2}\mathbf{D},\mathcal{G})$ is a proper regular subspace of $(\frac{1}{2}\mathbf{D}, H^1_0(\Gamma))$.

We are now ready to prove $\mathcal{C}\subset \mathcal{F}$.  For any $u\in \mathcal{C}$, let $K:=\text{supp}[u]$. Clearly $K\subset \Gamma$ is compact. Choose a $\phi\in C_\mathrm{c}^\infty(\mathbf{R}^d)$ such that $\phi$ supports on $\{x:|x|\leq 1\}$ and let $\phi_n(x):=n^d\phi(nx)$. Define $u_n:=\phi_n*u\in C_\mathrm{c}^\infty(\mathbf{R}^d)$. It is easy to check that there exists another compact set $\tilde{K}\supset K$ and $\tilde{K}\subset \Gamma$ such that $\text{supp}[u_n]\subset \tilde{K}$, and hence $u_n\in C_\mathrm{c}^\infty(\Gamma)$, for enough large $n$. As $(a_{ij})$ is locally bounded, there exists a constant $M>0$ such that $|a_{ij}(x)|<M$ for any $1\leq i,j\leq d$ and $x\in \tilde{K}$. On the other hand, $u_n\rightarrow u$ in $H^1(\Gamma)$. Hence we have:
\[\begin{split}
	\mathcal{E}(u_n-u,u_n-u)&=\int_{\Gamma}\sum_{i,j=1}^d a_{ij}(x)\frac{\partial (u_n-u)}{\partial x_i}\frac{\partial (u_n-u)}{\partial x_j} dx \\	
		&=\int_{\tilde{K}}\sum_{i,j=1}^d a_{ij}(x)\frac{\partial (u_n-u)}{\partial x_i}\frac{\partial (u_n-u)}{\partial x_j} dx \\
		&\leq M\cdot \int_{\tilde{K}}\sum_{i,j=1}^d \bigg|\frac{\partial (u_n-u)}{\partial x_i}\frac{\partial (u_n-u)}{\partial x_j}\bigg| dx \\
		&\leq {d\cdot M}\cdot\sum_{i=1}^d  \int_{\tilde{K}}\bigg|\frac{\partial (u_n-u)}{\partial x_i}\bigg|^2dx \\
		&\rightarrow 0
\end{split}\]
as $n\rightarrow \infty$. It follows that $u\in \mathcal{F}$ which complete the proof.
\end{proof}

\section*{Acknowledgement}

The authors would like to thank Prof. Zhen-Qing Chen for stimulating discussions and many helpful suggestions.

\end{document}